\newtheorem{theorem}{Theorem}
\newtheorem{claim}[theorem]{Claim}
\newtheorem{proposition}[theorem]{Proposition}
\newtheorem{lemma}[theorem]{Lemma}
\newtheorem{definition}[theorem]{Definition}
\newtheorem{corollary}[theorem]{Corollary}
\newtheorem{remark}[theorem]{Remark}
\newtheorem{problem}{Problem}
\numberwithin{equation}{section}
\numberwithin{theorem}{section}
\newcommand{\N}{\ensuremath{\mathbb{N}}}
\newcommand{\Z}{\ensuremath{\mathbb{Z}}}
\newcommand{\R}{\ensuremath{\mathbb{R}}}
\newcommand{\C}{\ensuremath{\mathbb{C}}}
\newcommand{\Ch}{\ensuremath{\mathrm{Ch}}}
\newcommand{\Hu}{\ensuremath{\mathrm{Hu}}}
\newcommand{\PL}{\ensuremath{\mathrm{PL}}}
\newcommand{\SL}{\ensuremath{\mathrm{SL}}}
\newcommand{\Wt}{\ensuremath{\mathrm{Wt}}}
\newcommand{\bs}{\ensuremath{\boldsymbol}}
\newcommand{\reduce}[1]{\scalebox{1}{\ensuremath{#1}}}
\newcommand{\vol}{\ensuremath{\mathrm{vol}}}
\newcommand{\bracket}[1]{\ensuremath{\langle #1 \rangle}}
\newcommand{\Norm}[1]{\ensuremath{\left\| #1 \right\|}}
\newcommand{\restrict}[2]{\ensuremath{\left. #1 \right|_{#2}}}
\newenvironment{example}
  {\pushQED{\qed}\examplex}
  {\popQED\endexamplex}
\DeclareMathOperator{\Hom}{Hom}
\DeclareMathOperator*{\smalloplus}{\reduce{\bigoplus}}
\DeclareMathOperator*{\smallcup}{\reduce{\bigcup}}
\def\C{\mathbb C}
\def\D{\Delta}
\def\G{\mathbb{G}}
\def\P{\mathbb{P}}
\def\R{{\mathbb R}}
\def\V{{\boldsymbol{V}}}
\def\W{{\boldsymbol{W}}}
\def\X{\mathcal{X}}
\def\codim{{\mathrm{codim\,}}}
\def\conv{{\rm{Conv}}}
\def\p{\partial }
\def\sec{\displaystyle \Sigma{\rm{sec}}}
\def\Hu{\mathrm{Hu}}
\def\vp{\varphi}
\begin{document}

\title{Numerical semistability of projective toric varieties}

\author{Naoto Yotsutani}

\address{Kagawa University, Faculty of education, Mathematics, Saiwaicho $1$-$1$, Takamatsu, Kagawa,
$760$-$8522$, Japan}
\email{yotsutani.naoto@kagawa-u.ac.jp}

\makeatletter
\@namedef{subjclassname@2020}{%
  \textup{2020} Mathematics Subject Classification}
\makeatother

\subjclass[2020]{Primary 51M20; Secondary 14M25, 53C55}

\keywords{Secondary polytope, GKZ-theory, $A$-discriminants, Hurwitz form}

\maketitle

\noindent{\bfseries Abstract.}
Let $X \to \P^N$ be a smooth linearly normal projective variety. It was proved by Paul that the $K$-energy of $(X,\restrict{\omega_{FS}}{X})$ restricted to the Bergman metrics is bounded from below if and only if the pair 
of (rescaled) Chow/Hurwitz forms of $X$ is {\emph{numerically semistable}}. 
In this paper, we provide a necessary and sufficient condition for a given smooth toric variety $X_P$ to be numerically semistable with respect to $\mathcal O_{X_P}(i)$ for a positive integer $i$.
Applying this result to a smooth polarized toric variety $(X_P, L_P)$, we prove that $(X_P, L_P)$ is asymptotically numerically semistable if and only if it is K-semistable for toric degenerations.


\section{Introduction}
This paper aims to provide a necessary and sufficient condition of 
numerical semistability defined in \cite{Sean12} for a smooth projective toric variety by adapting Gelfand-Kapranov-Zelevinsky theory
($A$-discriminants/resultants, Hurwitz forms). Surprisingly, we will see that our problem can be deduced into the beautiful binomial theorem in probability theory (see, Claim \ref{claim:binomial}).  

Firstly, we recall the notion of numerical semistability and semistability of pairs defined in \cite{Sean12}.
Let $G$ be a complex reductive algebraic group. Let $\bs V$ and $\bs W$ be finite dimensional $G$-representations with nonzero vectors $v\in \bs V\setminus \set{0}$ and $w\in \bs W\setminus \set{0}$. 
Considering the two $G$-orbits
\begin{align}
\begin{split}\label{eq:orbits}
\mathcal O_G(v,w)&:=G\cdot[(v,w)]\subset \P(\bs V\oplus \bs W), \quad \text{and} \\
\mathcal O_G(v)&:=G\cdot [(v,0)]\subset \P(\bs V\oplus \set{0})
\end{split}
\end{align}
respectively, we denote by $\overline{\mathcal O_G(v,w)}$ (resp. $\overline{\mathcal O_G(v)}$) the Zariski closure of $\mathcal O_G(v,w)$ in $\P(\bs V\oplus \bs W)$ 
(resp. of $\mathcal O_G(v)$ in $\P(\bs V\oplus \set{0})$). The pair $(v,w)$ is said to be {\emph{semistable}} if $\overline{\mathcal O_G(v,w)} \cap \overline{\mathcal O_G(v)} =\emptyset$.
In view of Hilbert-Mumford criterion in classical GIT, it is essential to study semistability when a reductive algebraic group is isomorphic to an algebraic torus 
(see, Proposition \ref{prop:HM_criterion}). For a fixed maximal algebraic torus $H\leq G$, let $\Wt_H(v)$ (resp. $\Wt_H(w)$) be the {\emph{weight polyotpe}} of $v$ (resp. $w$). 
Then, the pair $(v,w)$ is called {\emph{numerically semistable}} if $\Wt_H(v) \subseteq \Wt_H(w)$ for any maximal algebraic torus $H\leq G$ (Definition \ref{def:Pss}).
Obviously, if the pair $(v,w)$ is semistable then it is numerically semistable, although the converse is not true for general pair $(v,w)$ as in \cite{PSZ23}.  
However, we remark that the implication 
\begin{center}
numerical semistablity \quad $\Rightarrow$ \quad semistability (of pairs)
\end{center}
could possibly hold for pairs of the form $(R_X^{\deg \Hu_X}, \Hu_X^{\deg R_X} )$ below.

Secondly, we recall the historical background of semistability of pairs.
Let $X^n \to \P^N$ be an $n$-dimensional irreducible smooth complex projective variety of degree $d\geqslant 2$. An irreducible subvariety $X\subset \P^N$ is called {\emph{non-degenerate}} if $X$ is not contained in any
hyperplane. A non-degenerate projective variety $X\subset \P^N$ is called \emph{linearly normal} if it 
cannot be represented as an isomorphic projection of a non-degenerate projective variety from a higher dimensional projective space. In order to study projective dual varieties and discriminants, it suffices to consider linearly normal projective varieties (see, Theorems $1.21$ and $1.23$ in \cite{Tev05}). Here and hereafter, we assume that a complex projective variety $X$ is linearly normal.
For the Fubini-Study K\"ahler form $\omega_{FS}$ of $\P^N$, we set $\omega$ by $\restrict{\omega_{FS}}{X}$. 

Let $G$ be the special linear group $\mathrm{SL}(N+1,\C)$ which naturally acts on $\P^N$. For each $\sigma \in G$, the associated {\emph{Bergman potential}} $\varphi_\sigma \in C^\infty(X)$ is given by
\[
\sigma^*\omega=\omega+\frac{\sqrt{-1}}{2\pi}\partial \overline \partial \varphi_\sigma >0.
\]
We denote by $\nu_\omega$ the K-energy of $(X,\omega)$.
For a complex projective variety $X^n \subset \P^N$ of degree $d\geqslant 2$, we denote the Chow form of $X$ by $R_X$.
Let $\P_N^\vee$ be the dual projective space of $\P^N$. The dual variety of $X$, which is the variety of tangent hyperplanes to $X$, is denoted by $X^\vee \subset \P_N^\vee$.
Let $\delta(X)$ denote the {\emph{dual defect}} of $X\hookrightarrow \P^N$ which is the nonnegative integer defined by
\[
\delta(X):=N-1-\dim X^\vee.
\]
If the dual variety $X^\vee$ is a hypersurface in $\P_N^\vee$, then the unique defining polynomial (up to a multiplicative constant) denoted by $\D_X$ is called the {\emph{$X$-discriminant}}: 
\[
X^\vee=\set{f\in \P_N^\vee | \D_X(f)=0}.
\]

Next we consider the Segre embedding $X^n\times \P^{n-1}\to \P^{(n+1)n-1}=:\P^\bullet$ for an $n$-dimensional projective variety $X^n\subset \P^N$.
The Segre variety $Y$ of $X^n\times \P^{n-1}$ in $\P^\bullet$ has the dual defect $\delta(Y)=0$, that is, the dual variety $Y^\vee$ is \emph{always} a hypersurface in $\P_\bullet^\vee$ according to \cite{WZ94}
(see also, \cite[p. $271$]{Sean12}). Then the defining polynomial of $Y^\vee$ is called the {\emph{$X$-hyperdiscriminant form}} which will be denoted by $\D_{X\times \P^{n-1}}$ throughout the paper.
As we shall see in Section \ref{sec:Hurwitz}, it was proved in \cite{St17} that the hyperdiscriminant form $\D_{X\times \P^{n-1}}$ coincides with the Hurwitz form $\Hu_X$ in the Stiefel coordinates.
See, Section \ref{sec:Hurwitz} for the definition of the Hurwitz forms.   

In \cite{Sean12}, Paul detected that the asymptotic behavior of the $K$-energy map along the Bergman potentials is expressed by the log norms of the Chow/Hurwitz forms.
\begin{theorem}[Theorem A in \cite{Sean12}]\label{thm:Sean1}
Let $X\subset \P^N$ be a linearly normal smooth projective variety of degree $d\geqslant 2$, where $G=\SL(N+1,\C)$ acts to $\P^N$ linearly.  
Let $R_X$ (resp. $\Hu_X$) be the Chow form (resp. the Hurwitz form) of $X$.
For any $\sigma \in G$, there are norms $\Norm{\cdot}$ such that the $K$-energy restricted to the Bergman metrics is given as follows:
\begin{equation}\label{eq:K-enAsymp}
\nu_\omega(\varphi_\sigma)=\deg(R_X)\log\frac{\Norm{\sigma \cdot \Hu_X}^2}{\Norm{ \Hu_X}^2}
-\deg(\Hu_X)\log\frac{\Norm{\sigma \cdot R_X}^2}{\Norm{ R_X}^2}.
\end{equation}
\end{theorem}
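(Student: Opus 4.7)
The plan is to split the K-energy $\nu_\omega(\varphi_\sigma)$ along the Bergman orbit into an entropy piece and a Ricci-energy piece via the Chen--Tian decomposition, and then to identify each piece separately with the log-norm of an explicit $G$-invariant form: the entropy piece with $\Hu_X$ (via GKZ/hyperdiscriminant theory) and the Ricci piece with $R_X$ (via Deligne pairings and the Knudsen--Mumford expansion of the CM line bundle).

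First I would invoke Chen's formula
\[
\nu_\omega(\varphi)=\frac{1}{V}\int_X\log\frac{\omega_\varphi^n}{\omega^n}\,\omega_\varphi^n\;+\;\mathcal J_{-\Ric(\omega)}(\varphi),
\]
where $\mathcal J_{-\Ric(\omega)}$ is the Aubin--Yau-type functional built from $\omega,\varphi,$ and $\Ric(\omega)$ that contains no logarithm. Restricted to the Bergman orbit $\varphi=\varphi_\sigma$, both summands become smooth $\SU(N+1)$-invariant functions $\mathcal H(\sigma)$ and $\mathcal J(\sigma)$ on $G$, and it suffices to identify each.

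Second, for $\mathcal J(\sigma)$ I would use the Knudsen--Mumford expansion of $\det\pi_{*}\O_X(k)$ on the Hilbert scheme and Zhang's theorem, which express the leading coefficient (the CM/Chow line bundle) in terms of $R_X$. Up to the Deligne pairing this gives the identity
\[
\mathcal J(\sigma)\;=\;-\deg(\Hu_X)\,\log\frac{\Norm{\sigma\cdot R_X}^2}{\Norm{R_X}^2},
\]
the weight $\deg(\Hu_X)$ being forced by matching the combinatorial coefficient with which the Chow factor appears in the CM expansion (equivalently, by the degree formula for the Segre dual hypersurface).

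Third, and this is the main obstacle, one has to show that the entropy
\[
\mathcal H(\sigma)\;=\;\frac{1}{V}\int_X\log\frac{(\sigma^{*}\omega)^n}{\omega^n}\,(\sigma^{*}\omega)^n
\]
equals $\deg(R_X)\log\Norm{\sigma\cdot \Hu_X}^2/\Norm{\Hu_X}^2$. The key point is that the Jacobian inside the logarithm is, along the $G$-orbit, a rational section of the canonical bundle of the incidence variety in $\P^N\times\P_N^{\vee}$ restricted to $X\times\P^{n-1}$. By the Weyman--Zelevinsky result $\delta(X\times\P^{n-1})=0$ recalled before Theorem~\ref{thm:Sean1}, the dual of the Segre image is a hypersurface, and by Sturmfels' identification the corresponding hyperdiscriminant form $\D_{X\times\P^{n-1}}$ coincides with the Hurwitz form $\Hu_X$ in Stiefel coordinates. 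Applying Poincar\'e--Lelong along the orbit $\sigma\mapsto\sigma\cdot\Hu_X$ and using the projection formula on $X\times\P^{n-1}$ then gives the claimed identity, with the prefactor $\deg(R_X)$ coming from the volume form $\omega^n$ pushed forward from the Segre variety.

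Combining the two identifications in Chen's formula yields (\ref{eq:K-enAsymp}). The difficulty is concentrated in the third step: one must upgrade the pointwise logarithmic identity $\log\det d\sigma$ to a global $\partial\overline{\partial}$-identity between currents, and recognize the resulting divisor as the pullback of $\{\Hu_X=0\}$ under the orbit map $G\to\P(\bs W)$. The normalization constants $\deg(R_X),\deg(\Hu_X)$ then come out automatically from the classical degree formulas for Chow forms and hyperdiscriminants under the standing assumption that $X$ is smooth and linearly normal.
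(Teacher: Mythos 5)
First, a point of orientation: the paper does not prove Theorem \ref{thm:Sean1} at all. It is quoted verbatim as Theorem A of \cite{Sean12} and used as a black box throughout, so your sketch can only be measured against Paul's original argument. At the level of overall architecture you have reconstructed that argument correctly: decompose $\nu_\omega(\varphi_\sigma)$ along the Bergman orbit via the Chen--Tian formula, and identify the resulting functionals with logarithms of norms of the Chow form and of the hyperdiscriminant $=$ Hurwitz form, using Weyman--Zelevinsky for nonvanishing, Sturmfels for the identification with $\Hu_X$, and Deligne pairings/Knudsen--Mumford for the metrics on the relevant lines.

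The genuine gap is that your two intermediate identities are individually false, and no choice of norms can repair them. Zhang's theorem identifies $\log\bigl(\Norm{\sigma\cdot R_X}^2/\Norm{R_X}^2\bigr)$ with the \emph{pure} Aubin--Mabuchi (Monge--Amp\`ere) energy, i.e.\ the change of metric on the Deligne pairing $\langle \mathcal O(1),\dots,\mathcal O(1)\rangle$; it is not the Ricci-twisted functional $\mathcal J_{-\Ric(\omega)}$, which involves the class of the relative canonical bundle and therefore cannot be computed from $R_X$ alone. Dually, the log-norm of the hyperdiscriminant computes a \emph{mixed} Deligne pairing containing $K_{X/\P^N}$: it equals the entropy \emph{plus} further energy terms, not the entropy alone. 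Identity \eqref{eq:K-enAsymp} holds only for the specific combination $\deg(R_X)(\cdots)-\deg(\Hu_X)(\cdots)$, precisely because those integer weights make the redundant pure-energy contributions cancel; they are not ``forced term by term'' as you assert. A concrete refutation of your third step: take $X$ a smooth conic in $\P^2$ and $\sigma$ in the stabilizer of $X$ inside $\mathrm{SL}(3,\C)$. Then $\sigma\cdot \Hu_X=\lambda_\sigma \Hu_X$ with $|\lambda_\sigma|=1$, so $\log\bigl(\Norm{\sigma\cdot\Hu_X}^2/\Norm{\Hu_X}^2\bigr)=0$ for \emph{every} norm, while the entropy $\mathcal H(\sigma)$ is nonconstant and unbounded along one-parameter subgroups of that stabilizer. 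Hence your step 3 (and, since the sum is the correct theorem, also your step 2) cannot hold as stated; the proof must instead establish the correct individual identities (Chow norm $=$ energy; hyperdiscriminant norm $=$ entropy $+$ energy) and exhibit the cancellation explicitly.
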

Let $H$ be a maximal algebraic torus of $G$. Let $\Wt_H(R_X)$ (resp. $\Wt_H(\Hu_X)$) be the weight polytope of the Chow form $R_X$ (resp. the Hurwitz form $\Hu_X$) with respect to $H$. 
Through $\eqref{eq:K-enAsymp}$, we see that the asymptotic behavior of the $K$-energy map along any one parameter subgroup of $H$ is dominated by $\Wt_H(R_X)$ and  $\Wt_H(\Hu_X)$. 
More precisely, Paul proved the following.
\begin{theorem}[Theorem C in \cite{Sean12}]\label{thm:Sean2}
Let $\omega:=\restrict{\omega_{FS}}{X}$ and let $\nu_\omega$ be the $K$-energy of $(X,\omega)$.
Then $\nu_\omega$ is bounded from below along any degenerations in $G$ if and only if for any maximal torus $H$ in $G$, the corresponding weight polytope of the Hurwitz form $\Hu_X$ dominates
the weight polytope of the Chow form $R_X$ in the sense that
\begin{equation}\label{eq:Pss}
\deg(\Hu_X)\Wt_H(R_X)\subseteq \deg(R_X)\Wt_H(\Hu_X).
\end{equation}
\end{theorem}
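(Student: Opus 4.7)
The plan is to derive the equivalence from Theorem \ref{thm:Sean1} in three moves: a Hilbert--Mumford-type reduction from arbitrary degenerations in $G$ to one-parameter subgroups of maximal algebraic tori, an asymptotic expansion of the log-norms appearing in \eqref{eq:K-enAsymp} in terms of weight polytopes, and the standard support-function characterization of polytope containment.

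First, I would choose the norms in \eqref{eq:K-enAsymp} to be $K$-invariant for the maximal compact subgroup $K=\SU(N+1)\le G$, and invoke the Cartan decomposition $G=K H_+ K$, where $H_+=\exp(\mathfrak h_{\R})$ is the noncompact part of a fixed maximal algebraic torus $H\le G$. Together with the fact that any algebraic one-parameter subgroup of $G$ is contained in some maximal torus, this reduces the boundedness of $\nu_\omega$ along all degenerations in $G$ to the following statement: for every maximal algebraic torus $H\le G$ and every $X\in\mathfrak h_{\R}$, the quantity $\nu_\omega(\varphi_{\lambda(t)})$ with $\lambda(t)=\exp(tX)$ is bounded from below as $t\to+\infty$. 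This is exactly the numerical Hilbert--Mumford reduction recorded in Proposition \ref{prop:HM_criterion}.

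Second, I would expand the log-norms along such a one-parameter subgroup. Decomposing $v\in\{R_X,\Hu_X\}$ into $H$-weight components, $v=\smallsum_w v_w$, the $K$-invariance of the norm gives $\Norm{\lambda(t)\cdot v}^2=\smallsum_w e^{2t\langle w,X\rangle}\Norm{v_w}^2$ and hence
\[
\log \Norm{\lambda(t)\cdot v}^2 \;=\; 2t\,\max_{w\in\Wt_H(v)}\langle w,X\rangle + O(1)\qquad(t\to+\infty).
\]
Substituting into \eqref{eq:K-enAsymp} yields
\[
\nu_\omega(\varphi_{\lambda(t)}) \;=\; 2t\bigl[\deg(R_X)\,h_{\Wt_H(\Hu_X)}(X)-\deg(\Hu_X)\,h_{\Wt_H(R_X)}(X)\bigr]+O(1),
\]
where $h_P(X):=\max_{w\in P}\langle w,X\rangle$ denotes the support function of a convex polytope $P$. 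Boundedness from below for every $X\in\mathfrak h_{\R}$ is thus equivalent to the pointwise inequality $h_{\deg(\Hu_X)\Wt_H(R_X)}\leqslant h_{\deg(R_X)\Wt_H(\Hu_X)}$ on $\mathfrak h_{\R}$.

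Finally, I would invoke the classical fact that, for convex polytopes $P,Q$ in a Euclidean space, $P\subseteq Q$ if and only if $h_P\leqslant h_Q$ pointwise; applied to $P=\deg(\Hu_X)\Wt_H(R_X)$ and $Q=\deg(R_X)\Wt_H(\Hu_X)$ this converts the previous inequality into the polytope inclusion \eqref{eq:Pss}. Running this dictionary over all maximal algebraic tori $H\le G$, as established in the first step, yields the stated equivalence. The chief obstacle is the first step: one must justify carefully that the $O(1)$ errors in the two separate log-norm asymptotics are uniformly bounded (along the compensating $K$-factor supplied by Cartan), so that the Hilbert--Mumford reduction genuinely detects only the weight polytopes $\Wt_H(R_X),\Wt_H(\Hu_X)$ and not the individual weight coefficients $\Norm{v_w}^2$ nor the base point of the 1-parameter subgroup.
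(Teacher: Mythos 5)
The paper does not actually prove this statement: Theorem \ref{thm:Sean2} is imported verbatim as Theorem C of \cite{Sean12}, so there is no internal proof to compare your argument against. Judged on its own terms, your derivation from Theorem \ref{thm:Sean1} follows the standard route (and essentially Paul's own): reduce to one-parameter degenerations inside maximal tori, expand the two log-norms linearly in $t$ with slopes given by support functions of the weight polytopes, and convert the resulting pointwise inequality of support functions into the inclusion \eqref{eq:Pss}. Two caveats. First, if ``degenerations'' is read as algebraic one-parameter subgroups --- which is how this paper uses the word in Remark \ref{rem:PSZ} and Proposition \ref{prop:HMPcriterion} --- then the Cartan decomposition is unnecessary: every algebraic one-parameter subgroup of $G$ already lies in some maximal torus, and your per-$\lambda$ asymptotics close the argument in both directions with no uniformity issue (rationality of the weight polytopes handles the passage from rational to arbitrary directions in $\mathfrak h_{\R}$). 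Second, if one instead wants a lower bound for $\nu_\omega(\varphi_\sigma)$ uniformly over $\sigma\in G$, the obstacle you flag at the end is genuine and is not resolved by your sketch: writing $\sigma=k_1ak_2$, the weight components $\Norm{(k_2\cdot v)_w}^2$ can tend to zero as $k_2$ approaches the locus where the support of $k_2\cdot v$ drops, so the $O(1)$ error is not uniform in $k_2$ and an additional compactness or semicontinuity argument (as in \cite{Sean12}) is required there. As written, your proof is complete and correct for the one-parameter-subgroup reading of the statement, which is the reading the present paper relies on.
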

We call a smooth projective variety $X\subset \P^N$ is {\emph{numerically semistable}} 
if the associated two weight polytopes satisfy the inclusion $\eqref{eq:Pss}$. Equivalently, we say that the pair $(R_{X}^{\deg \Hu_{X}}, \Hu_{X}^{\deg R_{X}})$ is numerically semistable
(see, Definitions \ref{def:Pss} and \ref{def:Pss2}).
The main focus of this paper is to investigate a necessary and sufficient condition for the inclusion in $\eqref{eq:Pss}$ for any smooth projective toric variety, building upon the works of
GKZ-theory \cite{GKZ94, WZ94, BFS90, KSZ92, St17}.

In order to state our main theorem more specifically, let $A=\set{\bs a_0, \ldots ,\bs a_N}$ be a finite set of points in $M_\Z\cong \Z^n$. 
Here and hereafter, we denote by $P$ the convex hull of $A$ in $\R^n(\cong M_{\R})$. 
A finite set $A$ is said to {\emph{satisfy $(*)$}} if the following conditions hold:
\begin{enumerate}
\item[(i)] $A=P\cap \Z^n=\set{\bs a_0, \ldots ,\bs a_N}$, and
\item[(ii)] $A$ affinely generates the lattice $\Z^n$ over $\Z$.
\end{enumerate}
By shrinking the lattice $M_\Z$ 
if necessary, we can assume that a finite set $A$ satisfies $(*)$. We regard $A$ as a set of Laurent monomials in $n$ variables, i.e., of monomials of the form
\[
\bs x^{\bs a}=x_1^{a_1}\cdots x_n^{a_n},
\]
where $\bs a=(a_1, \ldots ,a_n)\in A$ is the exponent vector and $x_1, \ldots ,x_n$ are $n$-variables.
The closure of the $A$-monomial embedding of a complex torus $(\C^\times)^n$ to the projective space $\P^N$ defines the $n$-dimensional projective toric variety $X_P\subset \P^N$.
Remark that $P=\conv(A)$ in $\R^n\cong M_\R$ is the corresponding moment (Delzant) polytope of $X_P$ with respect to the natural $(S^1)^n$-action which determines the toric structure of $X_P$. 
Now we clarify the problem that we shall consider in the present paper.
\begin{problem}\rm
Let $X_P\subset \P^N$ be a linearly normal smooth projective toric variety. 
Provide a necessary and sufficient condition for $X_P$ to be 
numerically semistable in terms of the associated moment polytope $P$.
\end{problem}
Conjecturally this problem is closely related to K-semistability in the sense of Donaldson \cite{Don02} in ``finite dimensional" setup. 
In our approach, we only use GKZ technique for investigating 
numerical semistability defined by $\eqref{eq:Pss}$ for a projective toric variety $X_P$.
For $i\in \N$, let $\PL(P;i)$ be the set of concave piecewise linear functions on $P$ defined in $\eqref{eq:PLfuns}$. 
Let $\mathcal W(X_P)$ the Weyl group of $X_P$ and $\PL(P;i)^{\mathcal W(X_P)}$ be the $\mathcal W(X_P)$-invariant subset of $\PL(P;i)$.
See, $\eqref{eq:PLinv}$ for further details on the terminologies. One of the main results in this paper is the following.
\begin{theorem}\label{thm:main}
Let $A=\set{\bs a_0, \dots , \bs a_N}\subset M_\Z$ be a finite set of lattice points which satisfies $(*)$.
Let $P$ denote the convex hull of $A$ in $M_\R$. For a fixed positive integer $i$, we denote the Chow form (resp. the Hurwitz form) of $(X_P, L_P^{\otimes i})$ by $R_{X_P}(i)$ \big(resp. $\Hu_{X_P}(i)$ \big).
Let $N_i$ be the number of lattice points in $iP$. Then the pair
$\left( R_{X_P}(i)^{\deg \Hu_{X_P}(i)}, \Hu_{X_P}(i)^{\deg R_{X_P}(i)}\right)$ is numerically semistable 
with respect to the natural action of $\mathrm{SL}(N_i,\C)$ if and only if
\begin{equation}\label{ineq:FPinv}
F_P(i;g_\varphi):=\vol(\p P)\int_Pg_\vp(\bs x)dv-\vol( P)\int_{\p P}g_\vp(\bs x)d\sigma \geqslant 0
\end{equation}
for each $g_\vp\in \PL(P;i)^{\mathcal W(X_P)}$.
\end{theorem}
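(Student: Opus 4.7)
The plan is to translate the weight-polytope inclusion \eqref{eq:Pss} into a support-function inequality on concave piecewise linear functions on $iP$, and then to identify the resulting quantity with $F_P(i;g_\varphi)$. First, by Hilbert-Mumford (Proposition~\ref{prop:HM_criterion}), numerical semistability for $G=\SL(N_i,\C)$ has to be verified on every maximal torus $H\leq G$. All maximal tori are $G$-conjugate; since the pair $(R_{X_P}(i),\Hu_{X_P}(i))$ is canonically attached to the toric datum $A$, the only residual ambiguity produced by conjugating the toric torus $T_P\subset G$ is the Weyl group $\mathcal{W}(X_P)$. This reduces the task to the weight polytopes of $R_{X_P}(i)$ and $\Hu_{X_P}(i)$ with respect to $T_P$, tested against $\mathcal{W}(X_P)$-invariant directions, i.e.\ against $g_\varphi\in\PL(P;i)^{\mathcal{W}(X_P)}$.

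Next, by GKZ theory \cite{GKZ94,BFS90,KSZ92}, the weight polytope $\Wt_{T_P}(R_{X_P}(i))$ equals (up to translation) the secondary polytope $\Sigma(iA)$. Its vertices $\psi_T$ are indexed by regular triangulations $T$ of $(iP,iA)$, with coordinates $\psi_T(\bs a)=(n+1)!\sum_{\sigma\in T,\ \bs a\in\sigma}\vol(\sigma)$. Pairing with a concave $g_\varphi$ that induces the triangulation $T_{g_\varphi}$ produces the support-function value $(n+1)!\int_{P}g_\varphi\,dv$ (up to the standard $i^n$-scaling). On the Hurwitz side, \cite{St17} together with \cite{WZ94} identifies $\Hu_{X_P}(i)$ with the hyperdiscriminant of the Segre embedding $X_P\times\P^{n-1}$; its Newton polytope admits a Cayley-type GKZ description, and the analogous calculation yields the support value as a positive multiple of the boundary integral $\int_{\p P}g_\varphi\,d\sigma$.

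With these identifications in hand, the inclusion \eqref{eq:Pss} is equivalent to
\[
\deg(\Hu_{X_P}(i))\cdot h_{\Sigma(iA)}(g_\varphi)\;\leq\;\deg(R_{X_P}(i))\cdot h_{\Wt(\Hu_{X_P}(i))}(g_\varphi)
\]
for every concave $g_\varphi\in\PL(P;i)^{\mathcal{W}(X_P)}$. Substituting the well-known expression $\deg(R_{X_P}(i))=(n+1)\,i^{n}\,n!\,\vol(P)$ and the corresponding boundary-volume formula for $\deg(\Hu_{X_P}(i))$, the shared combinatorial factors cancel and the inequality rearranges precisely to $F_P(i;g_\varphi)\geq 0$; combined with the Weyl reduction of the first paragraph, this yields both directions of the claimed equivalence.

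The main obstacle is the boundary-integral identity on the Hurwitz side. The Chow side is classical GKZ, but the Hurwitz vertices are described through a Cayley construction whose collapse into the clean boundary measure $\int_{\p P}g_\varphi\,d\sigma$ requires a nontrivial combinatorial identity. I expect this to be exactly where Claim~\ref{claim:binomial}, the beautiful binomial identity flagged in the introduction, enters the argument, accounting for the probabilistic weighting of simplex volumes on $\p P$. Once this identity is in place, the residual degree bookkeeping and the Weyl-invariance reduction from the first step complete the proof.
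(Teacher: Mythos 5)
Your overall strategy matches the paper's: reduce via the Hilbert--Mumford--Paul criterion to a support-function inequality on the standard torus, identify the Chow side with the secondary polytope so that its support value is $(n+1)!\,i^n\int_P g_\varphi\,dv$, handle the Hurwitz side through the Cayley/product-polytope description of the hyperdiscriminant of $X_P\times\P^{n-1}$, and invoke the binomial identity (Claim~\ref{claim:binomial}) to collapse the resulting alternating sums. You also correctly anticipate where that identity enters. However, there is a genuine error at the decisive step.

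You assert that the support value of the Hurwitz polytope in the direction $g_\varphi$ is ``a positive multiple of the boundary integral $\int_{\p P}g_\varphi\,d\sigma$.'' It is not. The Hurwitz vector is $\xi_T=n\eta_{T,n}-\eta_{T,n-1}$, and the paper's Propositions~\ref{prop:AffHurwitz} and~\ref{prop:MaxHurwitz} show that
\[
\max\Set{\bracket{\vp,\psi}\,|\,\psi\in\Hu(P)}
= n(n+1)!\int_P g_\vp(\bs x)\,dv \;-\; n!\int_{\p P}g_\vp(\bs x)\,d\sigma ,
\]
i.e.\ a large positive multiple of the \emph{bulk} integral minus a multiple of the boundary integral, mirroring the degree formula $\deg\Hu_{X_P}=n(n+1)!\vol(P)-n!\vol(\p P)$ of Lemma~\ref{lem:degHu}. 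This two-term structure is essential: in the difference
$\deg(\Hu)\,h_{\Ch}(\varphi)-\deg(R)\,h_{\Hu}(\varphi)$ the leading terms $i^{2n}\,n\,(n+1)!^2\,\vol(P)\int_P g_\varphi\,dv$ cancel exactly, leaving $-i^{2n-1}(n+1)!\,n!\bigl(\vol(\p P)\int_P g_\varphi\,dv-\vol(P)\int_{\p P}g_\varphi\,d\sigma\bigr)$, which is what turns Paul's condition $F_P(\varphi)\leqslant 0$ into $F_P(i;g_\varphi)\geqslant 0$. With your claimed pure-boundary support value (and a degree for $\Hu_{X_P}(i)$ read off as a ``boundary-volume formula''), no such cancellation occurs and the rearrangement in your third paragraph produces an inequality of the wrong shape and sign; the argument does not close. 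To repair it you need the full computation of the affine hull and support function of $\Hu(P)$ via the product polytope $Q=P\times\D_{n-1}$, which is precisely the content of Section~\ref{sec:KeyProps}. A secondary, more minor point: the reduction from all maximal tori to $\mathcal W(X_P)$-invariant directions should be justified by Kempf's instability theorem (Proposition~\ref{prop:Kempf}) rather than by conjugacy of maximal tori alone, since the pair is only canonically attached to one torus and conjugation moves the weight polytopes.
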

Remark that we {\emph{only}} consider the concave piecewise linear functions which determines the regular triangulations of $(P,A)$ in Theorem \ref{thm:main}.
In other words, it suffices to see the nonegativity in $\eqref{ineq:FPinv}$ for degenerations corresponding to the vertices of the secondary polytope, in particular only finitely many degenerations (cf. \cite[Chapter 7]{GKZ94}). 
Applying Theorem \ref{thm:main} to a smooth polarized toric variety, we prove that asymptotic numerical semistability is equivalent to K-semistability.
\begin{corollary}[See, Theorem \ref{thm:PandK}]\label{cor:PandK}
Let $(X_P, L_P)$ be a smooth polarized toric variety with the associated moment polytope $P$.
Then $(X_P,L_P)$ is asymptotically numerically semistable if and only if it is K-semistable for toric degenerations.
\end{corollary}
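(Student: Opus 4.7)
The plan is to recognize the functional $F_P$ from Theorem \ref{thm:main} as a (negative) multiple of Donaldson's toric Futaki invariant, and then to show that as $i$ varies the test class $\bigcup_i \PL(P;i)^{\mathcal W(X_P)}$ exhausts all rational convex piecewise linear functions on $P$ (up to sign and symmetrization), at which point Donaldson's characterization of K-semistability for toric degenerations \cite{Don02} translates directly into asymptotic numerical semistability.

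The identification is an elementary computation: factoring out $\vol(P)$,
\[
F_P(g) \;=\; -\vol(P)\,\mathcal L(g), \qquad \mathcal L(h):=\int_{\p P}h\,d\sigma \;-\; \frac{\vol(\p P)}{\vol(P)}\int_{P}h\,dv,
\]
where $\mathcal L$ is Donaldson's Futaki invariant for the toric test configuration associated to $h$. Substituting the convex PL function $f:=-g$ converts the inequality $F_P(g)\ge 0$ into Donaldson's condition $\mathcal L(f)\ge 0$. Recall that Donaldson proved $(X_P,L_P)$ is K-semistable for toric degenerations if and only if $\mathcal L(f)\ge 0$ for every rational convex PL function $f$ on $P$. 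Hence the corollary reduces to showing that testing $\mathcal L\ge 0$ over all rational convex PL $f$ is equivalent to testing $F_P(g_\varphi)\ge 0$ over every $g_\varphi\in\PL(P;i)^{\mathcal W(X_P)}$ and every large $i$.

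The Weyl-invariance is handled by symmetrization. Given any rational convex PL function $f$ on $P$, the average $\bar f := |\mathcal W(X_P)|^{-1}\sum_{w\in\mathcal W(X_P)} f\circ w$ is again rational, convex, and PL, and is $\mathcal W(X_P)$-invariant; since $P$, $\p P$ and the ambient Lebesgue and surface measures are all $\mathcal W(X_P)$-stable, $\mathcal L$ is $\mathcal W(X_P)$-invariant, so $\mathcal L(\bar f)=\mathcal L(f)$. Thus $\mathcal L\ge 0$ on $\mathcal W(X_P)$-invariant rational convex PL functions is equivalent to $\mathcal L\ge 0$ on all rational convex PL functions. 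For the density, any rational concave PL function $g$ on $P$ has breakpoints in $\tfrac{1}{i}\Z^n\cap P$ once $i$ is sufficiently divisible, so $i\,g$ corresponds to a lattice height on $iP\cap\Z^n$; by the linearity of $F_P$ in $g_\varphi$ together with the standard secondary-polytope description (cf.~\cite[Chapter~7]{GKZ94}), every such concave height is a nonnegative combination of the vertex heights of the secondary polytope, i.e.\ of functions arising from regular triangulations. This is exactly the class $\PL(P;i)^{\mathcal W(X_P)}$ after the symmetrization argument above, and the two inequalities match term-by-term.

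The main obstacle is the density step: one has to verify that the restriction to concave PL functions realizing \emph{regular triangulations} (rather than arbitrary regular subdivisions), present in the remark following Theorem \ref{thm:main}, is harmless once the levels $i$ are allowed to vary. This is where the linear-functional nature of $F_P$ together with the positive-span property of the vertices of the secondary polytope is used; combining this with $\mathcal W(X_P)$-averaging and the sign flip $f\leftrightarrow -g$ produces the desired equivalence between asymptotic numerical semistability and K-semistability for toric degenerations, completing the proof of the corollary.
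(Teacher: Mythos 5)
Your argument is correct in substance, but it follows a genuinely different route from the paper. The paper treats the implication from K-semistability as immediate and proves the converse by contradiction: a hypothetical K-destabilizing function is realized as a $T'$-piecewise-linear function for a non-regular triangulation $T'$, compared pointwise with the regular-triangulation test functions $g_{\varphi,T}$, and a contradiction is extracted through a case analysis on whether $T$ refines $T'$. You instead translate both notions into the single linear functional $\mathcal L=-F_P/\vol(P)$, flip signs between convex and concave test functions, remove the $\mathcal W(X_P)$-invariance restriction by averaging (legitimate, since $\mathcal L$ is linear and $P$, $dv$, $d\sigma$ are $\mathcal W(X_P)$-stable), and observe that every rational concave PL function lies in $\PL(P;i)$ once $i$ clears the denominators of its breakpoints. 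This is shorter, avoids the delicate refinement case analysis entirely, and isolates exactly where the two notions could a priori differ (Weyl-invariance and the lattice restriction on breakpoints), showing both differences are harmless. One sentence of yours should be repaired: the assertion that every concave height is ``a nonnegative combination of the vertex heights of the secondary polytope'' is not the right mechanism --- heights are not combinations of secondary-polytope vertices. The obstacle you flag there is in fact vacuous: Theorem \ref{thm:main} already quantifies over \emph{all} of $\PL(P;i)^{\mathcal W(X_P)}$, i.e.\ over every concave hull $g_\varphi$, and the restriction to regular triangulations is only a finiteness remark about which tests suffice; where one does need that a general $g_\varphi$ coincides with $g_{\varphi,T}$ for some regular triangulation $T$, the correct justification is the generic-perturbation argument already carried out in the proof of Proposition \ref{prop:MaxHurwitz}, not a positive-span property.
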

This paper is organized as follows. Section \ref{sec:Prelim} collects the basic definitions and terminologies used in the paper.
Section \ref{sec:GKZ} gives the fundamental properties of the Chow/Hurwitz polytopes of projective toric varieties.
In Section \ref{sec:combinat}, we calculate the degree of the Hurwitz form by adapting the $A$-discriminant formula $\eqref{eq:degDiscrim}$ and Vandermonde's identity.
This yields the key idea for computing the affine hull of a Hurwitz polytope and the Futaki-Paul invariant for a toric variety in Section \ref{sec:KeyProps}.
We prove Theorem \ref{thm:main} and Corollary \ref{cor:PandK} in Sections \ref{sec:Proof} and \ref{sec:PandK}, respectively.

Through the paper, we work over a complex number field $\C$, and a {\emph{manifold}} is a smooth irreducible complex projective variety.

\vskip 4pt

\noindent{\bf{Acknowledgement.}}
The author would like to thank Professors Chi Li and Yuji Sano for fruitful discussions on this topic.
He also thanks Professor Sean Paul to point him out the relationship between numerical semistability and semistability of pairs in plain words. 
This work was partially supported by JSPS KAKENHI Grant Numbers JP$18$K$13406$ and JP$22$K$03316$.

\section{Preliminaries}\label{sec:Prelim}
In this section, we recall background and some basic facts about GIT-stability. In order to deal with semistability  of pairs for a projective variety $X\subset \P^N$, we firstly define 
the notion of {\emph{semistable pair}} for any two nonzero vectors $v$ and $w$ (Definition $\ref{def:Pss}$). 
Later on, we will adapt this concept to the pair of the Chow form $R_X$ and the Hurwitz form $\Hu_X$ 
following Paul's strategy \cite{Sean12, Sean13, Sean21}.

\subsection{Hilbert-Mumford's semistability}\label{sec:GIT}
Let $G$ be a reductive algebraic group and $\V$ a finite dimensional complex vector space. Suppose $G$ acts linearly on $\V$.
A nonzero vector $v\in \V$ is called {\emph{$G$-semistable}} if the Zariski closure of the $G$-orbit $\mathcal O_G(v)$ in $\V$ does not contain the origin: $0\notin \overline{\mathcal O_G(v)}$.
For any $v\in \V\setminus \set{ 0}$, let $[v]\in\P(\V)$ be its projectivization. For simplicity, we abbreviate $[v]$ by $v$. Then $v\in \P(\V)$ is said to be $G$-semistable if any representation of $v$ is
$G$-semistable. There is a good criterion to verify a given nonzero vector $v$ being $G$-semistable.
\begin{proposition}[Hilbert-Mumford criterion]\label{prop:HM_criterion}
A point $v\in \P(\V)$ is $G$-semistable if and only if it is $H$-semistable for all maximal algebraic torus $H\leq G$.
\end{proposition}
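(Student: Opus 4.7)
The plan is to establish the two implications separately: the forward direction is formal from the definitions, while the converse reduces to the classical one-parameter-subgroup formulation of the Hilbert--Mumford criterion.

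For the forward implication, assume $v$ is $G$-semistable. For any maximal torus $H \leq G$ acting on $\V$ by restriction, the inclusion $H \cdot v \subseteq G \cdot v$ of orbits gives $\overline{\mathcal{O}_H(v)} \subseteq \overline{\mathcal{O}_G(v)}$. Since $0 \notin \overline{\mathcal{O}_G(v)}$ by hypothesis, it follows immediately that $0 \notin \overline{\mathcal{O}_H(v)}$, so $v$ is $H$-semistable.

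For the converse, I would argue by contrapositive: assume $v$ is not $G$-semistable, so $0 \in \overline{\mathcal{O}_G(v)}$. The classical one-parameter-subgroup criterion of Hilbert--Mumford for reductive groups produces a 1-PS $\lambda \colon \mathbb{C}^\times \to G$ with $\lim_{t\to 0} \lambda(t) \cdot v = 0$. Since $\lambda(\mathbb{C}^\times) \leq G$ is a connected abelian algebraic subgroup, its Zariski closure is a torus, and any torus in $G$ is contained in some maximal torus $H$. The destabilizing curve $\{\lambda(t)\cdot v : t \in \mathbb{C}^\times\}$ therefore lies entirely in $\mathcal{O}_H(v)$, and its limit at $t = 0$ witnesses $0 \in \overline{\mathcal{O}_H(v)}$. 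This contradicts the assumed $H$-semistability of $v$ for this particular $H$.

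The main obstacle is the 1-PS criterion itself, namely the nontrivial assertion that any $0 \in \overline{\mathcal{O}_G(v)}$ is approached along some algebraic one-parameter subgroup of $G$. This is where reductivity of $G$ is essential; the standard proofs proceed via Iwahori's theorem or Kempf's optimal-destabilizing-filtration argument, and I would simply cite this result rather than reproduce it. Once that is granted, the reduction to a maximal torus is a structural observation: any connected diagonalizable subgroup of a reductive group sits inside a maximal torus, and the image of $\lambda$ is such a subgroup.
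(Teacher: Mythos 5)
Your argument is correct and is the standard proof of this classical criterion; the paper itself states Proposition \ref{prop:HM_criterion} without proof (it is quoted as a known fact, with the torus case deferred to \cite[Theorem 9.2]{Dol03}), so there is no in-paper argument to compare against. You correctly isolate the one genuinely nontrivial ingredient --- that instability of a point under a reductive group is always witnessed by an algebraic one-parameter subgroup (Iwahori/Kempf) --- and the remaining steps (orbit-closure containment for the forward direction, and placing the image of the destabilizing $\lambda$ inside a maximal torus for the converse) are exactly as they should be. One minor point worth making explicit: since $v\neq 0$ and $\lim_{t\to 0}\lambda(t)\cdot v=0$, the 1-PS $\lambda$ is automatically nontrivial, so its image closure is indeed a positive-dimensional torus sitting inside some maximal torus $H$.
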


Now we assume that a reductive algebraic group $G$ is isomorphic to an algebraic torus.
Let $\mathcal X(G)$ denote the character group of $G$. Then $\X(G)$ consists of algebraic homomorphisms $\chi:G\to \C^\times$. 
If we fix an isomorphism $G\cong (\C^\times)^{N+1}$, we may express each $\chi$ as a Laurent monomial
\[
\chi(t_0, \ldots ,t_N)=t_0^{a_0}\cdots t_N^{a_N} \qquad \text{for} \quad t_i\in\C^{\times} \quad \text{and} \quad a_i\in \Z.
\]
Hence there is the identification between $\X(G)$ and $\Z^{N+1}$:
\[
\chi=(a_0, \ldots ,a_N)\in \Z^{N+1}.
\]
Then, it is known that $\V$ decomposes under the action of $G$ into weight spaces
\[
\V=\smalloplus_{\chi\in \X(G)}\V_\chi, \qquad \V_\chi:=\Set{v\in \V|t\cdot v=\chi(t)\cdot v, \quad t\in G}.
\]

Let $v\in \V\setminus\set{ 0}$ be a nonzero vector in $\V$ with
\[
v=\sum_{\chi \in \X(G)}v_\chi,\qquad v_\chi\in \V_\chi.
\]
For a finite set of integer vectors $A$, we denote the convex hull of $A$ by $\conv(A)$. The {\emph{weight polytope}} of $v$ with respect to $G$-action is the convex lattice polytope in
$\X(G)\otimes \R\cong \R^{N+1}$ defined by
\[
\Wt_G(v):=\conv\set{\chi \in \chi(G)|v_\chi \neq \bs 0}\subset \R^{n+1}.
\]
In the case where $G$ is an algebraic torus, the Hilbert-Mumford criterion (Proposition $\ref{prop:HM_criterion}$) can be restated as follows.
For a proof, we refer the reader to \cite[Theorem $9.2$]{Dol03}.
\begin{proposition}[The numerical criterion]\label{prop:numerical}
Suppose $G$ is isomorphic to an algebraic torus which acts on a complex vector space $\V$ linearly.
For a nonzero vector $v$ in $\V$, $v$ is $G$-semistable if and only if $\Wt_G(v)$ contains the origin.
\end{proposition}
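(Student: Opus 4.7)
The plan is to translate semistability into a weight-theoretic condition via the decomposition $\V=\smalloplus_{\chi}\V_\chi$. The basic observation, which I would record first, is that for any one-parameter subgroup $\lambda\in \Hom(\C^\times,G)\cong \Z^{N+1}$ and any $v=\sum_\chi v_\chi$,
\[
\lambda(t)\cdot v=\sum_\chi t^{\langle \lambda,\chi\rangle}\,v_\chi,
\]
so $\lim_{t\to 0}\lambda(t)\cdot v=0$ exactly when $\langle \lambda,\chi\rangle>0$ for every weight $\chi$ with $v_\chi\neq 0$. The two directions of the criterion then correspond to two sides of a convex-geometric dichotomy about $0$ and $\Wt_G(v)$.

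For the contrapositive of $(\Leftarrow)$, suppose $0\notin\Wt_G(v)$. Since $\Wt_G(v)$ is a lattice polytope, hyperplane separation gives a rational functional strictly positive on $\Wt_G(v)$, and clearing denominators produces an integer vector $\lambda\in \Z^{N+1}$ with $\langle \lambda,\chi\rangle>0$ on every weight appearing in $v$. Viewing $\lambda$ as a $1$-PS, the identity above yields $\lambda(t)\cdot v\to 0$ as $t\to 0$, so $0\in \overline{\mathcal O_G(v)}$ and $v$ is not $G$-semistable.

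For $(\Leftarrow)$, suppose $0\in \Wt_G(v)$. I would produce a $G$-invariant polynomial $F$ on $\V$ with $F(v)\neq 0$ and $F(0)=0$; then $G$-invariance forces $F\equiv F(v)$ on $\overline{\mathcal O_G(v)}$, excluding $0$ from the closure. Since $\Wt_G(v)$ is a rational polytope, expressing $0$ as a convex combination of vertices and clearing denominators gives non-negative integers $n_j$, not all zero, such that $\sum_j n_j\chi_j=0$ for some weights $\chi_j$ with $v_{\chi_j}\neq 0$. For each $j$, let $\ell_j\in \V^*$ annihilate $\V_{\chi'}$ for $\chi'\neq\chi_j$ and take a non-zero value on $v_{\chi_j}$. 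Under the induced action $(t\cdot\ell)(u)=\ell(t^{-1}u)$, the vector $\ell_j$ has weight $-\chi_j$, so $F:=\prod_j \ell_j^{n_j}$ has weight $-\sum_j n_j\chi_j=0$, i.e., is $G$-invariant. By construction $F(v)=\prod_j \ell_j(v_{\chi_j})^{n_j}\neq 0$ while $F(0)=0$, which completes the argument.

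The main obstacle is the $(\Leftarrow)$ direction: one must manufacture a $G$-invariant regular function out of purely convex-combination data, taking care to track the sign of the weight induced on $\V^*$ so that $F$ is genuinely $G$-fixed, and to confirm that rationality of $\Wt_G(v)$ lets the convex combination be rescaled to an integer relation. The forward direction is essentially hyperplane separation combined with the weight-action identity and should go through routinely.
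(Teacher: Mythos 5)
Your proof is correct and complete: the separating-hyperplane argument gives that $0\notin\Wt_G(v)$ forces instability via a destabilizing one-parameter subgroup, and the monomial invariant $F=\prod_j\ell_j^{n_j}$ built from an integer relation $\sum_j n_j\chi_j=0$ gives semistability when $0\in\Wt_G(v)$; the only slip is cosmetic, since your first paragraph is the contrapositive of $(\Rightarrow)$, not of $(\Leftarrow)$. The paper does not prove this proposition itself but cites Dolgachev (Theorem~9.2), where essentially this same standard argument appears, so your route matches the intended one.
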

Now let $G_v$ be the stabilizer for a fixed point $v\in \V$.
Let $G_0$ be a reductive subgroup of $G_v$. Let us denote the centralizer of $G_0$ in $G$ by $C(G_0)$.
Then we have the natural $C(G_0)$-action on $\V$ induced by the injective map $C(G_0)\hookrightarrow G$.
The following Kempf's Instability Theorem was investigated by Li-Li-Sturm-Wang.
\begin{proposition}[Corollary $2.3$ in \cite{LLSW19}]\label{prop:Kempf}
A point $v \in \P(\V)$ is $G$-semistable if and only if it is $C(G_0)$-semistable.
\end{proposition}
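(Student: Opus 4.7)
The plan is to prove the two implications separately, with the nontrivial direction reduced to Kempf's uniqueness of the optimal destabilizing one-parameter subgroup.

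The forward implication is essentially formal. If $v$ is $G$-semistable, then $0 \notin \overline{\mathcal O_G(v)}$; since $C(G_0) \leq G$ we have $\overline{C(G_0)\cdot v} \subseteq \overline{\mathcal O_G(v)}$, hence $0 \notin \overline{C(G_0)\cdot v}$ as well. I would record this observation as a one-line remark and then turn to the real content.

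For the converse I would argue by contrapositive. Suppose $v$ is $G$-unstable. By the Hilbert--Mumford criterion (Proposition \ref{prop:HM_criterion}) there exists a one-parameter subgroup $\lambda: \C^{\times}\to G$ with $\lim_{t\to 0}\lambda(t)\cdot v=0$. Fix a $G$-invariant norm $\|\cdot\|$ on the $\Q$-vector space of virtual one-parameter subgroups, and let $\mu(v,\lambda)$ denote the Mumford weight. By Kempf's instability theorem there exists a (nonempty) set $\Lambda(v)$ of \emph{optimal} destabilizing one-parameter subgroups of $G$, i.e.\ those indivisible $\lambda_{*}$ that minimize $\mu(v,\lambda)/\|\lambda\|$ among all destabilizers. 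Kempf further shows: (a) every $\lambda_{*}\in\Lambda(v)$ determines the same parabolic subgroup $P(v):=P(\lambda_{*})\leq G$; (b) the stabilizer $G_{v}$ is contained in $P(v)$; and (c) $\Lambda(v)$ is a single orbit under conjugation by the unipotent radical $R_{u}(P(v))$.

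Now use the reductivity of $G_0\leq G_v\leq P(v)$. Pick any $\lambda_{*}\in \Lambda(v)$ and consider the action of $G_0$ on $\Lambda(v)$ by conjugation; since $G_0\subseteq P(v)$ this is a well-defined action, and by (c) it factors through the quotient $P(v)/R_u(P(v))\cdot\text{(centralizer)}$. The Levi decomposition $P(v)=L\ltimes R_u(P(v))$ together with the fact that every reductive subgroup of $P(v)$ is $R_u(P(v))$-conjugate to a subgroup of the Levi $L$ (a standard consequence of Mostow's theorem on reductive subgroups of algebraic groups) allows me to conjugate $\lambda_{*}$ by an element of $R_u(P(v))$ so that the resulting optimal 1-PSG, still denoted $\lambda_{*}$, commutes with $G_0$. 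Concretely, after this conjugation $\lambda_{*}(\C^{\times})\subseteq C(G_0)$, and it still destabilizes $v$. By Proposition \ref{prop:numerical} applied inside $C(G_0)$, the weight polytope $\Wt_{C(G_0)}(v)$ does not contain the origin, so $v$ is not $C(G_0)$-semistable.

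The main obstacle I anticipate is the conjugation step in the previous paragraph: one needs a clean reason why an optimal destabilizer can be chosen to commute with the reductive subgroup $G_0$, rather than merely normalize it. The right tool is the conjugacy of maximal reductive subgroups of $P(v)$ under $R_u(P(v))$ (Mostow--Borel--Serre), together with the fact that a one-parameter subgroup of a Levi factor commuting with a fixed reductive subgroup can be arranged by simultaneous conjugation. If one prefers to avoid Mostow, an alternative route is to work directly with the Kempf--Rousseau functional $M(v,\cdot)$ on the spherical building of $G$ and show that its minimum locus is $G_v$-stable (hence $G_0$-stable) and is a convex subcomplex, so that the fixed-point theorem for reductive group actions on Bruhat--Tits buildings yields a minimizer fixed by $G_0$; any such minimizer corresponds to a 1-PSG in $C(G_0)$.
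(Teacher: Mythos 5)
The paper does not actually prove this proposition: it is imported verbatim as Corollary~$2.3$ of \cite{LLSW19}, so there is no internal argument to compare against. Your proposal is, in substance, the standard Kempf-instability argument that underlies the cited result, and it is essentially correct. The easy direction is exactly as you say. For the converse, the chain ``$G_0\subseteq G_v\subseteq P(v)$, the optimal class $\Lambda(v)$ is a single $R_u(P(v))$-conjugacy orbit, and a reductive subgroup of $P(v)$ can be moved by $R_u(P(v))$ into a Levi factor'' does yield an optimal destabilizer $\lambda_*$ with image in $C(G_0)$. The one logical link you leave implicit is why landing in a common Levi forces commutation: you need to observe that the Levi attached to $\lambda_*$ is the centralizer $L(\lambda_*)=C_G(\lambda_*(\C^\times))$, so the image of $\lambda_*$ is central in $L(\lambda_*)$; hence once $G_0\subseteq uL(\lambda_*)u^{-1}=L(u\lambda_*u^{-1})$ for some $u\in R_u(P(v))$, the conjugated optimal one-parameter subgroup automatically centralizes $G_0$. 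With that sentence added, the Mostow/non-abelian-$H^1$ step is legitimate over $\C$, and your alternative fixed-point formulation (reductive $G_0$ acting affinely on the $R_u(P(v))$-torsor $\Lambda(v)$) is the cleaner and more commonly written version of the same idea.

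Two small imprecisions. First, your final appeal to Proposition~\ref{prop:numerical} is misplaced: that criterion is stated only for algebraic tori, and $C(G_0)$ need not be one. You do not need it: having produced a one-parameter subgroup $\lambda_*$ of $C(G_0)$ with $\lim_{t\to 0}\lambda_*(t)\cdot \tilde v=0$ for a lift $\tilde v\in \V$, the origin lies in $\overline{C(G_0)\cdot \tilde v}$ by definition, so $v$ is $C(G_0)$-unstable. Second, be slightly careful about whether Kempf's theory is applied to the vector $\tilde v$ or the point $[v]$; since the paper's $G_0$ sits inside the stabilizer of the vector, which is contained in the stabilizer of the line, the inclusion $G_0\subseteq P(v)$ holds in either convention and nothing breaks.
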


\subsection{Semistability of pairs}\label{sec:Pss}
Next we recall the definitions of numerical semistability and semistablity of pairs which were introduced by Paul in \cite{Sean12, Sean13}. 
In particular, these concepts can be regarded as natural generalizations of Hilbert-Mumford's semistability discussed in 
Section $\ref{sec:GIT}$ (see, Proposition $\ref{prop:GITss}$). For more details, we refer the reader to Section $2$ in \cite{Sean12} and \cite{Sean13}.

Let $G$ be 
a complex reductive algebraic group and let $(\V,\rho)$ be a rational $G$-representation with $v\in \V\setminus \set{ 0}$.
We recall that $\bs V$ is said to be {\em{rational}} if for any $v\in \V\setminus \set{ 0}$ and $\alpha \in \V^{\vee}$ (the dual vector space)
\[
\varphi_{\alpha,v}:G\longrightarrow \C, \qquad \varphi_{\alpha,v}(\sigma):=\alpha(\rho(\sigma)\cdot v)
\]
is a regular function on $G$, i.e.,
\[
\varphi_{\alpha,v}\in \C[G]:=\text{affine coordinate ring of $G$.}
\]
Let $H$ be a maximal algebraic torus of $G$. We denote the character lattice of $H$ by
\[
{M}'_\Z:=\Hom_\Z(H,\C^\times)\cong \Z^N.
\]
Then the dual lattice ${N}'_\Z:=\Hom_\Z(\C^\times,H)$ is identified with the set of one parameter subgroups $\lambda:\C^\times \to H$.
Note that the duality is given by
\[
\braket{\, \cdot , \cdot\,}:{N'_\Z}\times{M}'_\Z \longrightarrow \Z, \qquad \chi(\lambda(t))=t^{\braket{\lambda,\chi}}.
\]
As usual, we set
\[
{M'_\R}:={M}'_\Z\otimes_\Z\R
\qquad \text{and} \qquad {N}'_\R:={N'_\Z}\otimes_\Z\R.
\]
Denoting the image of $\lambda$ in $N'_\R$ by $l_\lambda$, we see that $l_\lambda$ is an integral linear functional on $M'_\R$.
The key to understanding semistability of pairs in terms of Proposition $\ref{prop:numerical}$ is the following concept.
\begin{definition}\rm
Let $\V$ be a rational representation of $G$, and let $H$ be a maximal algebraic torus of $G$.
Let us denote a one parameter subgroup in $H$ by $\lambda$. We define the {\emph{weight}} $w_\lambda(v)$ of $\lambda$ on $v\in \V\setminus \set{0}$ by
\[
w_\lambda(v):=\min_{\bs x\in \Wt_H(v)}l_\lambda(\bs x)=\min\set{\braket{\chi,\lambda}|\chi \in \mathrm{supp }(v)}.
\]
\end{definition}
Let $\V$ and $\W$ be rational $G$-representations with nonzero vectors $v \in \V\setminus \set{0}$ and $w \in \W\setminus \set{ 0}$. 
As we saw in $\eqref{eq:orbits}$, we consider the $G$-orbits $\mathcal O_G(v,w)$ in $\P(\bs V\oplus \bs W)$ and $\mathcal O_G(v)$ in $\P(\bs V\oplus \set{0})$ respectively,
and denote by $\overline{\mathcal O_G(v,w)}$ and $\overline{\mathcal O_G(v)}$ their closures in the corresponding projective spaces.
\begin{definition}\label{def:ss}\rm
The pair $(v,w)$ is said to be {\emph{semistable}} if $\overline{\mathcal O_G(v,w)}\cap \overline{\mathcal O_G(v)}=\emptyset$.
\end{definition}
For convenience, we call $(v,w)$ is {\emph{P-semistable}}\footnote{The name of P-stability is derived from the words from (i) {\em P}aul and (ii) Stability of {\em P}airs.}
if the condition in Definition \ref{def:ss} holds.
\begin{definition}\label{def:Pss}\rm
The pair $(v,w)$ is called {\emph{numerically semistable}} if  $\Wt_H(v)\subseteq \Wt_H(w)$ for any maximal algebraic torus $H\leq G$.
\end{definition}
It was also shown in \cite[Proposition $2.5$]{Sean12} that $(v,w)$ is numerically semistable if and only if $w_{\lambda}(w)\leqslant w_{\lambda}(v)$ for all degenerations $\lambda$ in $H$,
where $H$ is a maximal algebraic torus of $G$. We remark that the meaning of {\emph{semistable}} in Definitions \ref{def:ss} and \ref{def:Pss} originates from the following fact. 
Namely, if we consider the case where $\V$ is the trivial one dimensional $G$-representation $(\V\cong \C)$,
then the straightforward computations show that P-semistability (resp. numerical semistability) coincides with Hilbert-Mumford's semistability in classical GIT.
\begin{proposition}[Example $2$ in \cite{Sean12}]\label{prop:GITss}
The pair $(1,w)$ is P-semistable if and only if the Zariski closure of $\mathcal O_G(w)$ does not contain the origin:
 $0\notin \overline{\mathcal O_G(w)}$. Moreover, $(1,w)$ is numerically semistable if and only if $0\in \Wt_H(w)$ for any maximal algebraic torus $H\leq G$.
\end{proposition}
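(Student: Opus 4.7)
The plan is to reduce both claims to direct unpackings of the definitions by exploiting the triviality of the $G$-action on $\V \cong \C$.

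For the first assertion, I would work in the $G$-invariant affine chart
\[
U := \set{[(a,u)] \in \P(\C \oplus \W)| a \neq 0},
\]
which is preserved by the $G$-action because $G$ fixes the $\C$-summand pointwise. The map $u \mapsto [(1,u)]$ is a $G$-equivariant isomorphism $U \cong \W$, under which the orbit $\mathcal O_G(1,w) \subset U$ corresponds to $\mathcal O_G(w) \subset \W$ and the single-point orbit $\mathcal O_G(1) = \set{[(1,0)]}$ corresponds to $\set{0}$. Since Zariski closure commutes with restriction to an open subset, one has $\overline{\mathcal O_G(1,w)} \cap U$ identified with $\overline{\mathcal O_G(w)}$. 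As $[(1,0)]$ already lies in $U$, the P-semistability condition $\overline{\mathcal O_G(1,w)} \cap \overline{\mathcal O_G(1)} = \emptyset$ is therefore equivalent to $0 \notin \overline{\mathcal O_G(w)}$ in $\W$.

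For the second assertion, I would simply compute $\Wt_H(1)$ explicitly. Since every maximal algebraic torus $H \leq G$ acts trivially on $\V$, the vector $1 \in \V$ lies in the weight space corresponding to the trivial character $0 \in \mathcal X(H)$, so $\mathrm{supp}(1) = \set{0}$ and hence $\Wt_H(1) = \set{0}$. The numerical semistability inclusion $\Wt_H(1) \subseteq \Wt_H(w)$ therefore reduces precisely to $0 \in \Wt_H(w)$ for every maximal algebraic torus $H \leq G$, as claimed.

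No serious obstacle arises; both parts are essentially formal. The only mild subtlety lies in the closure argument of the first part, namely that the projective closure of $\mathcal O_G(1,w)$ in $\P(\C \oplus \W)$ may acquire additional limit points on the hyperplane at infinity $\set{a = 0}$, but these lie outside $U$ and therefore do not affect whether the point $[(1,0)] \in U$ lies in the closure.
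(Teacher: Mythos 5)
Your proof is correct, and it is exactly the "straightforward computation" the paper alludes to (the paper itself gives no proof, only a citation to Example 2 of Paul's paper): the passage to the $G$-invariant affine chart $\set{a\neq 0}\cong \W$ identifies $\overline{\mathcal O_G(1,w)}\cap U$ with $\overline{\mathcal O_G(w)}$ and the point $[(1,0)]$ with $0$, while $\Wt_H(1)=\set{0}$ reduces the inclusion of weight polytopes to $0\in \Wt_H(w)$. You also correctly flag and dispose of the only subtlety, namely that limit points on the hyperplane at infinity are irrelevant to whether $[(1,0)]\in U$ lies in the closure.
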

In Definition \ref{def:Pss2}, we adapt the concept of numerical semistability to liniearly normal projective varieties.
\begin{remark}\label{rem:PSZ}\rm
From Definitions \ref{def:ss} and \ref{def:Pss}, one can show that numerical semistability implies P-semistability for {\emph{any}} pair $(v,w)$ in arbitrary pair of $G$-representations
$(\bs V, \bs W)$. Although the converse is not true in general \cite{PSZ23, BHJ22}, it could be true for the pair $(R_X^{\deg \Hu_X},\Hu_X^{\deg R_X})$.
For reader's convenience, we clarify the relation between the inclusion of weight polytopes and disjoint orbit closures as follows.

Let $(v,w)$ be the pair of nonzero vectors in $G$-representations. Then, the following three conditions are equivalent:
\begin{enumerate}
\item $\Wt_H(v)\subseteq \Wt_H(w)$ for any maximal algebraic torus $H\leq G$.
\item $\overline{\mathcal O_H(v,w)}\cap \overline{\mathcal O_H(v)}=\emptyset$ for any maximal algebraic torus $H\leq G$.
\item $w_\lambda(w)\leqslant w_\lambda(v)$ for any degeneration $\lambda$ in $H$ (a maximal algebraic torus of $G$).
\end{enumerate}
\end{remark}

\subsection{The Chow forms and the discriminant forms}\label{sec:CD}
This subsection collects the definitions of the Chow forms and the discriminant forms of irreducible complex projective varieties.
See, \cite{GKZ94} for more details on these topics.

Let $X^n\to \P^N$ be an $n$-dimensional irreducible complex projective variety of degree $d\geqslant 2$.
We always assume that $X$ is {\emph{smooth}} unless otherwise stated.
Let $\mathbb G(k,N)$ be the Grassman variety that parameterizes $k$-dimensional projective linear subspaces of $\P^N$.
The {\emph{associated hypersurface}} $Z_X$ of $X^n\to \P^N$ is the subvariety in $\G(N-n-1,N)$ given by
\begin{equation}\label{def:AssociateHyp}
Z_X:=\Set{L\in \G(N-n-1,N)| L\cap X\neq \emptyset}.
\end{equation}
We note that $Z_X$ has the following fundamental properties (see, \cite[p.$99$]{GKZ94}):
\begin{enumerate}
\item $Z_X$ is an irreducible divisor in $\G(N-n-1,N)$,
\item $\deg Z_X=d$ in the Pl\"ucker coordinates, and
\item $Z_X$ is given by the vanishing of a section $R_X\in H^0(\G(N-n-1,N),\mathcal O(d))$.  
\end{enumerate}
The {\emph{Cayley-Chow form ($X$-resultant)}} of $X^n\to \P^N$, denoted by $R_X$, is the defining polynomial (unique up to constant multiples)
of the irreducible divisor $Z_X$ defined in $\eqref{def:AssociateHyp}$. Remark that $R_X$ is also irreducible polynomial with
$\deg R_X=d$ in the Pl\"ucker coordinates. Setting $\V:=H^0(\G(N-n-1,N),\mathcal O(d))$ and $R_X\in \P(\V)$ which is the projectivization of the Chow form, we call $R_X$ the {\emph{Chow point}} of $X$. In particular, the weight polytope 
of the Chow point (form) of $X^n\to \P^N$ with respect to $(\C^{\times})^{N+1}$-action is called the {\emph{Chow polytope}}.
In Section $\ref{sec:SecPoly}$, we recall that the Chow polytope of a projective toric variety $X_P$ coincides with the secondary polytope 
$\sec(A)$ that is a polytope whose vertices are corresponding to regular triangulations of $P=\conv(A)\subset \R^n$.

Next we define the $X$-discriminant of a linearly normal smooth projective variety $X$ in $\P^N$. Let $\P_N^\vee$ be the dual projective space of
$\P^N$ so that each point of $\P_N^\vee$ corresponds to a hyperplane in $\P^N$.  
We denote the Zariski tangent space to $X$ at $p\in  X$ by $\mathbb T_p(X)$.
\begin{definition}\rm
The {\emph{dual variety}} $X^\vee \subset \P_N^\vee$ of $X\to \P^N$ is 
the set of tangent hyperplanes to $X$ given by
\begin{equation*}
X^\vee :=
\Set{f\in \P_N^\vee |\mathbb T_p(X) \subset \ker(f) \text{ \,for\, some\, } p\in X}.
\end{equation*}
Then we define the {\emph {dual defect}} of $X\subset \P^N$ by
\[
\delta(X):=N-\dim X^\vee -1.
\]
\end{definition}
When $X^\vee$ has codimension one in $\P_N^\vee$ (i.e., the dual defect is zero), there exists an irreducible homogeneous defining polynomial
(unique up to multiple constants) $\D_X$ of $X^\vee$ which is often called the {\emph{$X$-discriminant form}}:
\[
X^\vee=\set{f\in \P_N^\vee | \D_X(f)=0}.
\]
Following notations and terminologies in \cite{Sean12}, we regard $X$-discriminants and $X$-resultants as homogeneous polynomials on spaces
of matrices:
\[
\D_X\in \C[\P_N^\vee]\cong \C[M_{1\times (N+1)}] \qquad \text{and} \qquad R_X\in \C[M_{(n+1)\times (N+1)}].
\]
Then the actions of $\sigma \in \SL(N+1,\C)$ on $R_X$ and $\D_X$ are determined by
\begin{align*}
\sigma \cdot \D_X\left(A\right)&=\D_X\left(A\cdot \sigma\right), \quad A=(a_i)_{0\leqslant i \leqslant N} \in M_{1\times (N+1)} \qquad \text{and}\\
\sigma \cdot R_X\left(B\right)&=R_X\left(B\cdot \sigma \right), \quad B=(b_{ij})_{0\leqslant i \leqslant n,\, 0\leqslant j \leqslant N}\in M_{(n+1)\times (N+1)}
\end{align*}
respectively. 

\subsection{Higher associated hypersurfaces and the Hurwitz forms}\label{sec:Hurwitz}
We generalize the construction of the associated hypersurface $Z_X$ in Section \ref{sec:CD} as follows.
Let $X^n \subset \P^N$ be an $n$-dimensional irreducible 
smooth subvariety of degree $d\geqslant 2$.
For each $0\leqslant k \leqslant n$, we define {\emph{$k$-th associated subvariety}} $\mathcal Z_k(X)$ in $\G(N+k-(n+1),\P^N)$ by
\begin{equation*}
\mathcal Z_k(X):=\Set{L\in \G| L\cap X\neq \emptyset, \quad \dim(L\cap \mathbb T_p(X))\geqslant k \text{ \,for\, some\, } p\in L\cap X},
\end{equation*}
where we abbreviate $\G(N+k-(n+1),\P^N)$ by $\G$.
It is known that ``generally" all of $\mathcal Z_k(X)$ are hypersurfaces in $\G$ (Proposition $2.11$, Chapter 3 in \cite{GKZ94}).
Obviously, $0$-th associated hypersurface $\mathcal Z_0(X)$ is $Z_X$ in Section \ref{sec:CD}.
Thus, for each $0\leqslant k \leqslant n$, we can choose the defining polynomial $\nabla_k(X)$ of $\mathcal Z_k(X)$ up to multiple constants if it is a hypersurface in $\G$.
We set $\nabla_k(X)= 1$ if $\codim \mathcal Z_k(X)\geqslant 2$. For $0\leqslant k \leqslant n$, we readily see that $\nabla_k(X)$ coincide with the Chow form $R_X$ if $k=0$,
and the discriminant form $\D_X$ if $k=n$.
In K\"ahler geometry, the most important object is the defining polynomial $\D_1(X)$ of $\mathcal Z_1(X)$.
Note that the $1$-st associated subvariety has {\emph{always}} codimension one due to Sturmfels (Theorem $1.1$ in \cite{St17}).
Hence, there is a unique non-constant defining polynomial $\Hu_X$ of $\mathcal Z_1(X)$ which is called the {\emph{Hurwitz form}} of $X\subset \P^N$.

On the other hand, for a given projective embedding $X^n\to \P^N$, we can consider the following Segre embedding
\begin{equation}\label{map:Segre}
X\times \P^{n-1}\longrightarrow \P\left( M^\vee_{n\times(N+1)}\right).
\end{equation}
Then it is known that the dual defect $\delta(X\times \P^{n-1})$ of the Segre image of $\eqref{map:Segre}$ is zero due to the result of Weyman and Zelevinsky
(see, Corollary 3.2 in \cite{WZ94}). Hence, there exists a non-constant homogeneous polynomial 
\[
\D_{X\times \P^{n-1}}\in \C[M_{n\times (N+1)}]
\]
such that
\[
(X\times \P^{n-1})^{\vee}=\set{f\in  \P\left( M^\vee_{n\times(N+1)}\right) | \D_{X\times \P^{n-1}}(f)=0}.
\]
In \cite{Sean12}, $\D_{X\times \P^{n-1}}$ is called the {\emph{$X$-hyperdiscriminant}} of $X\to \P^N$.

Now we see the relationship between $\Hu_X$ and $\D_{X\times \P^{n-1}}$.
As explained in \cite[p. $272$]{Sean12}, it is known that $\D_{X\times \P^n}$ coincides with the Chow form $R_X$ by the {\emph{Cayley trick}}.
Analogously, it was proved in \cite{St17} that the Hurwitz form (in the Stiefel coordinates) coincides with the hyperdiscriminant form $\D_{X\times \P^{n-1}}$ (see also, Section 3.2 in \cite{Sean21}).
Thus, we can state the following definition by adapting Definition \ref{def:Pss} into the pair of $R_X$ and $\Hu_X$.
\begin{definition}\label{def:Pss2}\rm
Let $X\to \P^N$ be a linearly normal irreducible projective variety with degree $d\geqslant 2$. Then there is the natural action of $G=\SL(N+1,\C)$ on $\P^{N}$
(which induces the action of $G$ on $X$). $X$ is said to be {\emph{numerically semistable}} if the pair of the invariant polynomials $\left(R_X^{\deg \Hu_X}, \Hu_X^{\deg R_X} \right)$ is numerically semistable in the sense of
Definition \ref{def:Pss} with respect to $G=\SL(N+1,\C)$-action.
\end{definition}
For a fixed positive integer $i>0$, we shall consider the Kodaira embedding $\Psi_i:X\to \P(H^0(\mathcal O_X(i))^*)=\P^{N_i}$.
Let $R_X(i)$ (resp. $\Hu_X(i)$) be the associated Chow form (resp. Hurwits form) of $\Psi_i(X)$.
Then we have the natural $\SL(N_i+1, \C)$-action on $\left(R_X(i)^{\deg \Hu_X(i)}, \Hu_X(i)^{\deg R_X(i)} \right)$
\begin{definition}\rm
A polarized variety $(X,L)$ is {\emph{asymptotically numerically semistable}} if $(X,L^{\otimes i})$ is numerically semistable for any sufficiently large integer $i$.
\end{definition}
In \cite[Theorem A]{Sean12}, Paul proved that the $K$-energy restricted to the Bergman potentials can be written as the difference between the log norms of the Chow form and the Hurwitz form.
He also proved that for $X\subset \P^N$ and the Fubini-Study form $\omega_{\rm{FS}}$ of $\P^N$, the $K$-energy of $(X,\restrict{\omega_{\rm{FS}}}{X})$ is bounded from below if and only if it is numerically semistable
(Theorem C in \cite{Sean12}).

In the following section, we further develop the concept of numerical semistability in the case where $X\to \P^N$ is a projective {\emph{toric}} variety building upon works of Gelfand-Kapranov-Zelevinsky ($A$-Resultants and $A$-Discriminants).

\section{$A$-Resultants and $A$-Discriminants}\label{sec:GKZ}
This section collects some fundamental properties and a combinatorial description of the Newton polytope of $A$-discriminants.
We refer the reader to \cite{GKZ94, LRS10, KL20} and references therein for further details.
Throughout this section, we always assume that a finite set of points $A=\set{\bs a_0, \dots, \bs a_N}$ in $\Z^n$ satisfies $(*)$.

\subsection{A construction of the secondary polytopes}\label{sec:SecPoly}
We firstly recall the definition of the secondary polytopes and their properties.
A {\emph{triangulation}} of $(P,A)$ is a collection $T$ of $n$-simplices all of whose vertices are points in $A$ that satisfies the 
following three propeties:
\begin{enumerate}
\item The union of all these simplices equals $P$; $\bigcup_{\sigma \in T}\sigma =P$.
\item $T$ is closed under taking faces; if $\sigma \in T$ and $\sigma'\preceq \sigma$, then $\sigma'\in T$.
\item Any pair of these simplices intersects in a common face (possibly empty). 
\end{enumerate}
In the above, $F'\preceq F$ denotes $F'$ is a face of $F$.
Recall that a triangulation $T$ of $(P,A)$ is called {\emph{regular}} if it can be obtained by projecting all the lower faces of a lifted finite set $A^\omega$ defined by
\[
A^\omega:=\Set{\widehat{\bs a}_0, \dots , \widehat{\bs a}_N}\subset \R^{n+1},\qquad  \widehat{\bs a}_i=
\begin{pmatrix}
\bs a_i \\ \omega_i
\end{pmatrix},
\]
for some $\omega=(\omega_0, \dots , \omega_N)\in \R^{N+1}$. Then, a vector $\omega\in \R^{N+1}$ can be thought of as a function
$\omega:A\to \R$ with $\omega(\bs a_i)=\omega_i$, which is often called a {\emph{height function}}.
See, p. $756$ in \cite{Yotsu16} for more detailed descriptions.
Let $T$ be a (not necessary regular) triangulation of $(P,A)$ and let $J=\set{0, \dots , N}$ be the index set of labels.
Let $\vol(\cdot)$ denote a translation invariant volume form on $\R^n$ with the normalization $\vol(\D_n)=1/n!$ for the standard $n$-dimensional simplex $\D_n=\conv\set{\bs e_i| 1\leqslant i \leqslant n}$, where $\bs e_i$ denotes the standard basis of $\R^n$.
For any simplex $\sigma$ of $T$, we denote the set of vertices of $\sigma$ by $\mathcal V(\sigma)$. Fix a point $\bs a_j\in A$.
Then we consider the function $\phi_{A,T}:A\to \R$ defined by
\begin{equation*}\label{eq:PreGKZ}
\phi_{A,T}(\bs a_j)=\sum_{\sigma:\bs a_j\in \mathcal V(\sigma)}n!\vol(\sigma),
\end{equation*}
where the summation is over all maximal simplices of $T$ for which $\bs a_j$ is a vertex.
In particular, $\phi_{A,T}(\bs a_j)=0$ for $j\in J$ if and only if $\bs a_j\in A$ is not a vertex of any simplex of $T$.
For each triangulation $T$ of $(P,A)$, we define the {\emph{Gel'fand-Kapranov-Zelevinsky (GKZ) vector}} of $T$ in $\Z^A\cong \Z^{N+1}$ by
\begin{equation}\label{eq:GKZvec}
\Phi_A(T)=\sum_{j\in J}\phi_{A,T}(\bs a_j)\bs e_j ,
\end{equation}
where $\bs e_j$ for $j\in J$ is the standard basis of $\R^{N+1}$. The {\emph{secondary polytope}} $\sec(P)$ is the convex polytope
in $\R^{N+1}$ defined by
\begin{equation}\label{def:SecPoly}
\sec(P):=\conv\set{\Phi_A(T)| T\text{ is a triangulation of } (P,A)}.
\end{equation}
It is known that the secondary polytopes have the following properties. 
\begin{theorem}[\cite{GKZ94}, p. $221$, Theorem $1.7$]\label{thm:GKZ}
Let $A=\set{\bs a_0, \dots, \bs a_N}$ be a finite set of points satisfying $(*)$. Then we have
\begin{enumerate}
\item $\dim \sec(P)=N-n$.
\item The vertices of $\sec(P)$ are bijectively corresponding to the regular triangulations of $(P,A)$.
If $T$ is a regular triangulation of $(P,A)$, then $\Phi_A(T)\neq \Phi_A(T')$ for any other triangulation $T'$ of $(P,A)$.
\end{enumerate}
\end{theorem}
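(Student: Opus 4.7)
The plan is to derive both assertions from a single pairing identity between a height vector $\omega\in\R^{N+1}$ and the GKZ vector $\Phi_A(T)$. Expanding the definition $\eqref{eq:GKZvec}$ simplex by simplex and using that the average of $\omega$ over the vertices of a maximal simplex $\sigma$ equals the value at the centroid of the affine interpolant (hence also equals $\vol(\sigma)^{-1}\int_\sigma g_{T,\omega}\,dv$), one obtains
\[
\langle \omega, \Phi_A(T)\rangle=(n+1)!\int_P g_{T,\omega}(\bs x)\,dv,
\]
where $g_{T,\omega}$ is the unique piecewise linear function on $P$ which is affine on each maximal simplex of $T$ and equals $\omega_j$ at every vertex $\bs a_j$ appearing in $T$. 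The prefactor $(n+1)!$ combines the $n!$ normalization of $\vol(\sigma)$ with the $(n+1)$-fold vertex averaging.

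For the dimension statement, I would specialize the identity to the constant height $\omega\equiv 1$ and to the $n$ linear heights $\omega_j=(\bs a_j)_k$ for $k=1,\dots,n$, obtaining the $T$-independent affine relations
\[
\sum_j \phi_{A,T}(\bs a_j)=(n+1)!\vol(P),\qquad \sum_j \phi_{A,T}(\bs a_j)\bs a_j=(n+1)!\int_P\bs x\,dv.
\]
Linear independence of these $n+1$ constraints follows from condition $(*)$, under which the matrix with rows $(1,\dots,1)$ and $((\bs a_0)_k,\dots,(\bs a_N)_k)$ has full rank $n+1$. This yields $\dim\sec(P)\leq N-n$, and for the reverse inequality I would produce $N-n+1$ affinely independent GKZ vectors by varying the height vector across combinatorially distinct regular triangulations.

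For the vertex correspondence, I would exploit that the regular triangulation $T_\omega$ is obtained by projecting the lower envelope of $A^\omega$, so its interpolant satisfies
$g_{T_\omega,\omega}(\bs x)=\inf\{\sum_i\lambda_i\omega_i:\bs x=\sum_i\lambda_i\bs a_i,\ \lambda_i\geq 0,\ \sum_i\lambda_i=1\}$.
For any other triangulation $T$, the value $g_{T,\omega}(\bs x)$ is itself such a convex combination on each simplex, so $g_{T,\omega}\geq g_{T_\omega,\omega}$ pointwise, with strict inequality on a set of positive measure whenever $T\neq T_\omega$ and $\omega$ is generic. Combined with the pairing identity, this shows $\Phi_A(T_\omega)$ is the unique minimizer of $\langle\omega,\cdot\rangle$ over the finite set of GKZ vectors, hence a vertex of $\sec(P)$. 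Conversely every vertex is exposed by some generic $\omega$ and so coincides with the GKZ vector of the associated regular triangulation; injectivity is immediate, since distinct regular triangulations produce distinct interpolants $g_{T,\omega}$ for generic $\omega$ and therefore distinct GKZ vectors. The most delicate step I anticipate is establishing the lower bound $\dim\sec(P)\geq N-n$ rigorously, as it requires controlling how the regular triangulation bifurcates as $\omega$ crosses walls of the secondary fan and confirming that enough combinatorial types occur to span the affine hull.
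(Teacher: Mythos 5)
First, a point of comparison: the paper offers no proof of this statement at all --- Theorem \ref{thm:GKZ} is imported verbatim from \cite{GKZ94} (Chapter 7, Theorem 1.7) and is only cited, so there is no in-paper argument to measure yours against. Your overall strategy is the standard GKZ one: the pairing identity $\langle \omega, \Phi_A(T)\rangle=(n+1)!\int_P g_{T,\omega}\,dv$ (which the paper records separately, with proof deferred to \cite{Yotsu16}), the two $T$-independent affine relations obtained from the constant and coordinate heights (these are exactly the relations $\eqref{eq:AffChow}$), and the characterization of $\Phi_A(T_\omega)$ as the unique optimizer of $\langle\omega,\cdot\rangle$ over all GKZ vectors via the envelope property $g_{T,\omega}\geq g_{T_\omega,\omega}$. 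Those parts are sound and consistent with the paper's lower-faces convention for regularity.

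The genuine gap is the one you flag yourself: the bound $\dim\sec(P)\geq N-n$. ``Producing $N-n+1$ affinely independent GKZ vectors by varying the height'' is a restatement of the claim, not a proof; a priori the GKZ vectors of all regular triangulations could lie in a smaller affine subspace. The standard way to close this is to study the support function $\omega\mapsto\min_T\langle\omega,\Phi_A(T)\rangle=(n+1)!\int_P g_\omega\,dv$, where $g_\omega$ is the envelope itself: this is a piecewise linear function of $\omega\in\R^{N+1}$ whose lineality space (the set of $\omega$ on which $\langle\omega,\cdot\rangle$ is constant over $\sec(P)$) must be shown to be \emph{exactly} the $(n+1)$-dimensional space of heights that are restrictions of affine functions on $\R^n$; condition $(*)$, namely that $A$ affinely generates $\Z^n$, is what guarantees this space has dimension $n+1$ and no more, and the equality $\dim\sec(P)=N+1-(n+1)$ follows. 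You prove the ``$\supseteq$'' inclusion of lineality spaces (affine heights give constant pairings) but not the ``$\subseteq$'' direction, which is where the content lies. A second, smaller soft spot: the assertion that $g_{T,\omega}>g_{T_\omega,\omega}$ on a set of positive measure whenever $T\neq T_\omega$ and $\omega$ is generic needs a sentence of justification --- for $\omega$ in the open secondary cone of $T_\omega$ the maximal domains of linearity of the envelope are precisely the maximal simplices of $T_\omega$, so any other triangulation's interpolant must exceed the envelope on some full-dimensional cell. As written, both steps are asserted rather than proved.
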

In order to see the relationship between the secondary polytopes and the Chow polytopes of projective toric varieties,
we next review on the construction of toric varieties. See, Chapter $5$ in \cite{GKZ94} for further details.

A {\emph{toric variety}} is a complex irreducible algebraic variety with a complex torus action having an open dense orbit.
As usual, let $A=\set{\bs a_0, \dots, \bs a_N}$ be a finite set of integer vectors in $\Z^n$ which satisfies $(*)$ and let $P$ be its
convex hull. Setting
\[
X_P^0=\set{[\bs x^{\bs a_0}:\cdots :\bs x^{\bs a_N}]\in \P^N|\bs x=(x_1,\dots ,x_n)\in (\C^\times)^n},
\]
we define the variety $X_P\subset \P^N$ to be the Zariski closure of $X_P^0$ in $\P^N$. 
Then $X_P$ is an $n$-dimensional equivariantly embedded subvariety in $\P^N$. We call $X_P\subset \P^N$ the toric variety
associated with $(P,A)$.

Now we consider the Chow point $R_{X_P}$ of the toric variety $X_P$. Let $\Ch(P)$ denote the Chow polytope of $X_P$
which is the weight polytope $\Wt_{(\C^\times)^{N+1}}(R_{X_P})$ of $R_{X_P}$ with respect to the algebraic torus action 
$(\C^\times)^{N+1}$. It was proved in \cite{KSZ92} that the Chow polytope $\Ch(P)$ coincides with the secondary polytope.
Combining this and Proposition $1.11$ in \cite[p. $222$]{GKZ94}, we have the following combinatorial description of the affine hull
of the Chow polytope of $X_P$: 
\begin{equation}\label{eq:AffChow}
\mathrm{Aff}_\R(\Ch(P))=\Set{(\psi_0, \ldots ,\psi_N)\in \R^{N+1}|
\begin{matrix}
\displaystyle \sum_{i=0}^N\psi_i=(n+1)!\vol(P) \\
\displaystyle \sum_{i=0}^N\psi_i\bs a_i=(n+1)!\int_P\bs x \,dv 
\end{matrix}
}.
\end{equation}

Next we define a $T$-piecewise linear function $g_{\varphi,T}:P\to \R$ as follows.
Let $T$ be a triangulation of $(P,A)$ and let $\varphi=(\varphi_0, \dots , \varphi_N)\in \R^{N+1}$ be a height function with 
$\varphi(\bs a_i)=\varphi_i$. Then, there is a unique $T$-piecewise-linear function $g_{\varphi, T}(\bs a_i)=\varphi_i$ and extended 
affinely on $\sigma$ for each maximal simplex $\sigma$ of $T$.
Remark that in the definition of $g_{\varphi,T}$, we do not require $\varphi$ to be the height function that induces the triangulation $T$.

For later use, we state the following equality between the GKZ vector and the integration of a $T$-piecewise-linear function $g_{\varphi,T}$
over the polytope $P$. 
\begin{lemma}
Let $\varphi \in \R^{N+1}$ be a height function and let $T$ be any triangulation of $(P,A)$.
For any simplex $\sigma\in T$, we identify a point $\bs a_j \in \sigma$ with its index $j\in J=\set{0,\dots ,N}$.
Then, we have
\begin{align*}
\braket{\varphi, \Phi_A(T)}&=(n+1)!\int_P g_{\varphi,T}(\bs x)\, dv \\
&=n!\sum_{\sigma\in T}\vol(\sigma)\sum_{j\in \sigma}\varphi_j.
\end{align*}
\end{lemma}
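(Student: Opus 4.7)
The plan is to prove both equalities by direct bookkeeping, starting from the definition of $\Phi_A(T)$ and exploiting the fact that on each maximal simplex of $T$ the function $g_{\varphi,T}$ is affine.

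First, I would establish the second equality $\braket{\varphi,\Phi_A(T)} = n!\sum_{\sigma\in T}\vol(\sigma)\sum_{j\in \sigma}\varphi_j$. Substituting the definition of $\Phi_A(T)$ from $\eqref{eq:GKZvec}$ together with the formula for $\phi_{A,T}(\bs a_j)$ yields a double sum whose outer index $j$ ranges over $J$ and whose inner sum ranges over those maximal simplices $\sigma\in T$ containing $\bs a_j$ as a vertex. Interchanging the order of summation so that the outer sum runs over $\sigma\in T$ and the inner sum over the vertices $\bs a_j$ of $\sigma$ delivers the right-hand side verbatim. Indices $j\in J$ whose points $\bs a_j$ are not vertices of any simplex of $T$ satisfy $\phi_{A,T}(\bs a_j)=0$, so they contribute zero on both sides and can be ignored.

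Second, for the equality $(n+1)!\int_P g_{\varphi,T}(\bs x)\,dv = n!\sum_{\sigma\in T}\vol(\sigma)\sum_{j\in \sigma}\varphi_j$, I would decompose $\int_P = \sum_{\sigma\in T}\int_\sigma$ over the maximal simplices of $T$, since the lower-dimensional faces are Lebesgue-negligible and their union covers $P$. On each such simplex $\sigma$ with vertex set $\{\bs a_{j_0},\dots,\bs a_{j_n}\}$, the function $g_{\varphi,T}$ is affine and takes the value $\varphi_{j_k}$ at $\bs a_{j_k}$. The classical centroid identity for affine functions then gives $\int_\sigma g_{\varphi,T}\,dv = \frac{\vol(\sigma)}{n+1}\sum_{k=0}^n\varphi_{j_k}$, because the centroid of a simplex is the average of its vertices and an affine function at the centroid equals its mean value. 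Summing over $\sigma$ and multiplying by $(n+1)!$ reproduces exactly the common right-hand side $n!\sum_{\sigma\in T}\vol(\sigma)\sum_{j\in \sigma}\varphi_j$.

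None of these steps is genuinely difficult: the whole argument is a Fubini-type swap of summation combined with the centroid formula for affine functions, so the main ``obstacle'' is purely notational. One should make explicit that the symbol ``$j\in\sigma$'' in the lemma means ``$\bs a_j\in\mathcal V(\sigma)$'' (consistently with the definition of $\phi_{A,T}$ appearing just above the statement), and one should record that points of $A$ that are not used as vertices of any simplex of $T$ contribute nothing to either side, so that the reindexing in both identities is unambiguous.
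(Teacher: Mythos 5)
Your proof is correct and supplies exactly the computation the paper leaves implicit: the paper's own ``proof'' is only a citation to Proposition 3.5 and Lemma 3.6 of \cite{Yotsu16}, and your argument---the Fubini-type swap of the double sum coming from the definition of $\Phi_A(T)$, plus the barycenter identity for affine functions on each maximal simplex (the same identity the paper itself records later as \eqref{eq:lin_bary})---is precisely the standard derivation behind that reference. Nothing is missing, and your remark that non-vertex points of $A$ contribute zero to both sides correctly disposes of the only bookkeeping subtlety.
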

\begin{proof}
See, Proposition $3.5$ and Lemma $3.6$ in \cite{Yotsu16}.
\end{proof}

\subsection{Massive GKZ vectors and the discriminant polytopes}\label{sec:MassiveGKZ}
Let $T$ be a triangulation of $(P,A)$. For $0\leqslant j\leqslant n$, a $j$-dimensional simplex $\sigma$ of $T$ is said to be {\emph{massive}} if it is contained in some $j$-dimensional face of $P$.
In that case, the face containing $\sigma$ is unique and we denote it by $\Gamma(\sigma)$.
For any $n$-face $\sigma$ of $T$, we see that $\sigma$ is massive with $\Gamma(\sigma)=P$.

Let $\Gamma \preceq P$ be a face, and let $\mathrm{Aff}_\R(\Gamma)\subset\R^n$ be its affine hull.
We introduce the lattice volume form $\vol_{\Z(\Gamma)}(\cdot)$ (or $\vol_\Z(\cdot)$) on $\mathrm{Aff}_\R$, induced by the lattice 
$\mathrm{Aff}_\Z(A\cap \Gamma)$. Note that the volume form $\vol_\Z(\cdot)$ is normalized so that $\vol_\Z(\D_n)=1$ for the standard
$n$-simplex $\D_n$. For $\bs a_i\in A$, let $M_T^j(\bs a_i)$ be the set of massive $j$-simplices of $T$ that have $\bs a_i$ as a vertex.
Then, we obtain functions $\eta_{T,j}:A\to \Z$ by setting
\begin{equation*}
\eta_{T,j}(\bs a_i)=\sum_{\sigma\in M_T^j(\bs a_i)}\vol_\Z(\sigma).
\end{equation*}
For the index set $I=\set{0,1,\dots , N}$ of labels, we consider the vector $\eta_{T,j}$ defined by
\begin{equation}\label{eq:MasGKZ}
\eta_{T,j}=\sum_{i\in I}\eta_{T,j}(\bs a_i)\bs e_i\in \Z^A\cong \Z^{N+1}, \qquad\text{for}\quad j=0,\dots, n.
\end{equation}
Remark that $\eta_{T,n}$ coincides with the GKZ vector defined in $\eqref{eq:GKZvec}$, because every $n$-face $\sigma$ of $T$ is massive.
In fact, we see
\begin{align*}
\eta_{T,n}&=\sum_{i\in I}\eta_{T,n}(\bs{a}_i)\bs e_{i}=\sum_{\sigma \in M_{T}^n(\bs a_i)}\vol_\Z(\sigma)\bs e_i
=\sum_{\sigma\in T \atop \bs a_i\in \sigma} n!\cdot\vol(\sigma)\bs e_i=\Phi_A(T).
\end{align*}
\begin{definition}\rm
For a triangulation $T$ of $(P,A)$, we define the {\emph{massive GKZ vector}} $\eta_T\in \Z^A$ of $T$ by
\[
\eta_T=\sum_{j=0}^n(-1)^{n-j}\eta_{T,j}.
\]
Moreover, two triangulations $T$ and $T'$ of $(P,A)$ are called {\emph{$D$-equivalent}} if $\eta_T=\eta_{T'}$.
\end{definition}
For a projective toric variety $X_P \to \P^N$, we denote the $X$-discriminant by $\D_{X_P}$.
In the literature of GKZ theory, $\D_{X_P}$ (resp. $R_{X_P}$) is called the {\emph{$A$-discriminant}} (resp. {\emph{$A$-resultant}}).
Let $\Wt_{(\C^{\times})^{N+1}}(\D_{X_P})$ be the weight polytope of the $A$-discriminant of $X_P\to \P^N$ with respect to $(\C^\times)^{N+1}$-action. Then, we have the following.
\begin{theorem}[Chapter $11$, Theorem $3.2$ in \cite{GKZ94}]
The weight polytope $\Wt_{(\C^{\times})^{N+1}}(\D_{X_P})$ coincides with the convex polytope $\D(P)$ in $\R^{N+1}$ defined by
\begin{equation}\label{def:discrim}
\D(P):=\conv\set{\eta_T| \text{$T$ is a triangulation of $(P,A)$}}.
\end{equation}
In particular, the vertices of $\D(P)$ are exactly the points $\eta_T$ for all regular triangulations $T$ of $(P,A)$.
More precisely, the set of vertices $\mathcal V(\D(P))$ is bijectively corresponding to the $D$-equivalence classes of regular triangulations of $(P,A)$.
\end{theorem}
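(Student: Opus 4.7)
The plan is to identify vertices of $\Wt_{(\C^\times)^{N+1}}(\D_{X_P})$ by computing initial forms of $\D_A$ under generic height vectors $\omega\in\R^{N+1}$. The general principle is that the vertices of the weight polytope (Newton polytope) of any nonzero Laurent polynomial $f$ are exactly the exponent vectors of the pure-monomial initial forms $\mathrm{in}_\omega(f)$ as $\omega$ ranges over interiors of maximal cones of the normal fan. Hence the theorem reduces to the claim: \emph{for a height $\omega$ lying in the interior of the secondary cone of a regular triangulation $T$ of $(P,A)$, the initial form $\mathrm{in}_\omega(\D_A)$ is a nonzero scalar multiple of the pure monomial $\bs x^{\eta_T}$}. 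The surjection $T\mapsto \eta_T$ onto vertices is automatic from the fact that the secondary fan of $A$ covers $\R^{N+1}$.

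The central tool, due to GKZ Chapter 10, is the principal $A$-determinant $E_A\in\C[x_0,\dots,x_N]$, which enjoys the factorization
\[
E_A \;=\; \prod_{\Gamma\preceq P}\D_{A\cap\Gamma}^{\,u(\Gamma)}
\]
over faces $\Gamma$ of $P$ with explicit positive multiplicities $u(\Gamma)$, whose Newton polytope equals the secondary polytope $\sec(P)$, and whose initial form under $\omega$ (in the secondary cone of $T$) is, up to a nonzero scalar, precisely the monomial $\bs x^{\Phi_A(T)}=\bs x^{\eta_{T,n}}$. Since $\mathrm{in}_\omega(\cdot)$ is multiplicative, the product formula gives
\[
\mathrm{in}_\omega(E_A) \;=\; \prod_{\Gamma\preceq P}\mathrm{in}_\omega(\D_{A\cap\Gamma})^{\,u(\Gamma)},
\]
and by induction on $\dim\Gamma$ — with base case $\D_{A\cap\sigma}=1$ for $\sigma$ a simplex — each $\mathrm{in}_\omega(\D_{A\cap\Gamma})$ is a pure monomial whose exponent is the analogously defined massive-GKZ vector for the restricted triangulation $T|_\Gamma$ on $\Gamma$.

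Matching exponents and solving for the top-dimensional factor $\Gamma=P$ determines the exponent of $\mathrm{in}_\omega(\D_A)$ uniquely, and a M\"obius inversion over the face lattice of $P$ — using that a massive $j$-simplex $\sigma$ of $T$ is contained in a unique minimal face $\Gamma(\sigma)\preceq P$ and contributes to $\D_{A\cap\Gamma'}$ exactly for $\Gamma'\succeq\Gamma(\sigma)$ — produces precisely the alternating sum $\sum_{j=0}^{n}(-1)^{n-j}\eta_{T,j}=\eta_T$. For the bijection with $D$-equivalence classes: the map $T\mapsto\eta_T$ factors through $D$-equivalence by definition, and distinct $D$-equivalence classes yield distinct initial monomials because the union of secondary cones producing the same vector $\eta_T$ is, by construction, exactly a $D$-equivalence class of regular triangulations. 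The main obstacle is the product formula for $E_A$ together with the identification $\Wt(E_A)=\sec(P)$ and the computation of $\mathrm{in}_\omega(E_A)$: these are the cornerstones of GKZ Chapter 10, proved via an explicit Koszul-type determinantal construction of $E_A$ and the Kapranov--Sturmfels--Zelevinsky identification of the Chow polytope with the secondary polytope; once admitted, the rest of the proof is combinatorial M\"obius inversion on the face poset of $P$.
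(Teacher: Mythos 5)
The paper offers no proof of this statement: it is imported verbatim from \cite{GKZ94} (Chapter 11, Theorem 3.2), so there is no in-paper argument to compare yours against. Judged on its own terms, your outline reproduces the standard GKZ proof: pass to the principal $A$-determinant $E_A$, invoke its prime factorization into $(A\cap\Gamma)$-discriminants over the faces $\Gamma\preceq P$, identify the Newton polytope of $E_A$ with the secondary polytope so that $\mathrm{in}_\omega(E_A)$ is the pure monomial with exponent $\Phi_A(T)=\eta_{T,n}$ for $\omega$ generic in the secondary cone of a regular $T$, and then use multiplicativity of initial forms together with an inversion over the face lattice to isolate the exponent of $\mathrm{in}_\omega(\D_{X_P})$ as the alternating sum $\eta_T=\sum_{j}(-1)^{n-j}\eta_{T,j}$. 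This is structurally sound; one small simplification is that monomiality of each factor $\mathrm{in}_\omega(\D_{A\cap\Gamma})$ follows immediately from monomiality of the product (a product of polynomials over a domain is a monomial iff each factor is), so the induction is needed only to pin down the lower-dimensional exponents, not the monomiality itself. Solving for the top factor also tacitly uses that the multiplicity of $\Gamma=P$ in the factorization is $1$, which is true but worth stating.

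Two caveats. First, the theorem asserts $\Wt_{(\C^\times)^{N+1}}(\D_{X_P})=\conv\{\eta_T\mid T \text{ a triangulation}\}$ with $T$ ranging over \emph{all} triangulations; your argument produces only the convex hull of the $\eta_T$ for \emph{regular} $T$. The missing inclusion $\eta_T\in \Wt_{(\C^\times)^{N+1}}(\D_{X_P})$ for non-regular $T$ is a genuine extra step, proved in GKZ via the supporting-function description, i.e. an inequality of the form $\langle\varphi,\eta_T\rangle\leqslant\max_{T'\,\mathrm{regular}}\langle\varphi,\eta_{T'}\rangle$ obtained from $g_{\varphi,T}\leqslant g_{\varphi}$ together with the boundary-integral formula of Lemma \ref{lem:alternate}. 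Second, your closing paragraph on the bijection with $D$-equivalence classes is circular as phrased: $D$-equivalence is \emph{defined} by $\eta_T=\eta_{T'}$, so distinct classes giving distinct vectors is a tautology; the real content is that every vertex is attained by some regular $T$ and that each $\eta_T$ with $T$ regular is genuinely a vertex, both of which your initial-form computation already delivers. Neither point undermines the overall plan.
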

We call $\D(P)$ the {\emph{discriminant polytope}} of $X_P\subset \P^N$.
It is known that the secondary polytope $\sec(P)$ is given by a Minkowski sum of the discriminant polytope and another polytope $D(P)$:
\begin{equation}\label{eq:MinSum}
\sec(P)=\D(P)+D(P).
\end{equation}
Conversely, $D(P)$ can be uniquely determined by $\eqref{eq:MinSum}$.
See, Remark $3.4$ (b) in \cite[p. $362$]{GKZ94} for more detailed descriptions.

Then, we have the following description of the affine hull of the discriminant polytope of $X_P$ analogous to the affine hull of the Chow polytope of $X_P$ $\eqref{eq:AffChow}$. 
\begin{proposition}\label{prop:AffDiscrim}
Let $A=\set{\bs a_0, \ldots , \bs a_N} \subset M_\Z$ be a finite set of integer vectors which satisfies $(*)$.
Let $P$ denote the convex hull of $A$ in $M_\R$. Then the affine hull of the discriminant polytope $\D(P)$ in $\R^{N+1}$ is equal to
\begin{align}\label{eq:AffDiscrim}
\begin{split}
&\mathrm{Aff}_\R \bigl(\D(P)\bigr) \\
&=\Set{(\eta_0, \dots ,\eta_N)\in \R^{N+1}|
\begin{matrix}
\displaystyle \sum_{i=0}^N\eta_i=\sum_{F\preceq P}(-1)^{\mathrm{codim}\, F}(\dim F+1)!\vol(F)  \hspace{0.9cm} \color{white}{2} \\
\displaystyle \sum_{i=0}^N\eta_i\bs a_i=\sum_{j=0}^n\sum_{F\in \mathcal F^j(P)}(-1)^{\mathrm{codim}\, F}(\dim F+1)!\int_F\bs x \,d\sigma_F 
\end{matrix}
}.
\end{split}
\end{align}
\end{proposition}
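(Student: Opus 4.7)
The strategy is to verify that the two affine equations on the right-hand side of \eqref{eq:AffDiscrim} are satisfied at every vertex $\eta_T$ of $\D(P)$ (giving $\subseteq$), and then to match dimensions (for $\supseteq$). Since $\eta_T = \sum_{j=0}^n (-1)^{n-j}\eta_{T,j}$, it suffices to first compute the partial sums $\sum_i \eta_{T,j}(\bs a_i)$ and $\sum_i \eta_{T,j}(\bs a_i)\bs a_i$ for each fixed dimension $j\in\{0,1,\ldots,n\}$ separately, and then take the signed combination.

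For the scalar sum, I swap the order of summation in $\eta_{T,j}(\bs a_i) = \sum_{\sigma \in M_T^j(\bs a_i)} \vol_\Z(\sigma)$: each massive $j$-simplex $\sigma$ of $T$ is counted once for each of its $j+1$ vertices, giving $\sum_i \eta_{T,j}(\bs a_i) = (j+1)\sum_\sigma \vol_\Z(\sigma)$ with the inner sum running over all massive $j$-simplices of $T$. Grouping these by the unique $j$-face $F=\Gamma(\sigma)$ that contains them and noting that the massive $j$-simplices with $\Gamma(\sigma)=F$ form a triangulation of $F$ (as $T$ restricts to each face of $P$), one obtains $\sum_i \eta_{T,j}(\bs a_i) = \sum_{F \in \mathcal F^j(P)}(\dim F + 1)!\vol(F)$ after absorbing the factor $j!$ into the lattice volume. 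For the moment sum, the simplex identity $\vol_\Z(\sigma)\sum_{\bs a_i \in \mathcal V(\sigma)}\bs a_i = (j+1)!\int_\sigma \bs x\, d\sigma_F$ (which follows from the barycenter formula for a $j$-simplex) together with the same grouping yields $\sum_i \eta_{T,j}(\bs a_i)\bs a_i = \sum_{F \in \mathcal F^j(P)}(\dim F + 1)!\int_F \bs x\, d\sigma_F$.

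Assembling the alternating sum and using $(-1)^{n-j} = (-1)^{\codim F}$ for $F \in \mathcal F^j(P)$ produces the two displayed equations; since they hold at every vertex $\eta_T$ of $\D(P)$, they hold on the entire affine hull, establishing the inclusion $\subseteq$. For the reverse inclusion, the right-hand side of \eqref{eq:AffDiscrim} is cut out by $n+1$ scalar linear conditions, which are linearly independent under $(*)$ (so $A$ affinely generates $\Z^n$); hence that affine subspace has dimension $N-n$. Combined with the Minkowski decomposition $\sec(P)=\D(P)+D(P)$ from \eqref{eq:MinSum} and $\dim\sec(P)=N-n$ from Theorem \ref{thm:GKZ}(1), this forces $\dim\D(P)=N-n$ and hence equality of the affine hulls. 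The main obstacle will be upgrading the bound $\dim\D(P)\leq N-n$ (from the Minkowski decomposition) to an equality, equivalently exhibiting $N-n$ affinely independent massive GKZ vectors $\eta_T$; the computations in the middle paragraph are essentially bookkeeping, paralleling the derivation of \eqref{eq:AffChow}.
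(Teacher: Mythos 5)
Your proposal is correct and follows essentially the same route as the paper's: the paper packages your vertex-by-vertex computation as Lemma \ref{lem:alternate} (the pairing $\braket{\varphi,\eta_T}=\sum_{j}(-1)^j(n+1-j)!\int_{\p^jP}g_{\varphi,T}\,d\sigma^{(j)}$, proved by the same barycenter-of-a-simplex bookkeeping and grouping of massive simplices by the faces containing them) and then specializes $\varphi$ to the constant and coordinate linear functions, which is exactly your direct computation of $\sum_i\eta_{T,j}(\bs a_i)$ and $\sum_i\eta_{T,j}(\bs a_i)\bs a_i$. The dimension count $\dim\D(P)=N-n$ that you flag as the remaining obstacle is not proved in the paper either --- it simply asserts $\dim\bigl(\Wt_{(\C^\times)^{N+1}}(\D_{X_P})\bigr)=\dim\Ch(P)=N-n$ as known from GKZ theory --- so you are, if anything, more explicit than the paper about what is being taken for granted there.
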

\begin{proof}
Based on the argument of the proof of $\eqref{eq:AffChow}$, we prove $\eqref{eq:AffDiscrim}$.
In fact, since 
\[
\dim\left( \Wt_{(\C^\times)^{N+1}}(\D_{X_P})\right)=\dim \Ch(P)=N-n,
\]
the equalities
\begin{align}
\sum_{i=0}^N\eta_i&=\sum_{F\preceq P}(-1)^{\mathrm{codim}\, F}(\dim F+1)!\vol(F), \label{eq:AffDiscrim1} \\
\sum_{i=0}^N\eta_i\bs a_i&=\sum_{j=0}^n\sum_{F\in \mathcal F^j(P)}(-1)^{\mathrm{codim}\, F}(\dim F+1)!\int_F\bs x \,d\sigma_F  \label{eq:AffDiscrim2}
\end{align}
define the same dimensional affine subspace in $\R^{N+1}$. Hence, it suffices to show that the equalities $\eqref{eq:AffDiscrim1}$ and $\eqref{eq:AffDiscrim2}$ hold on $\D(P)$. 

Obviously, $\eqref{eq:AffDiscrim1}$ holds on 
$\D(P)$ by the definition of the discriminant polytope $\eqref{def:discrim}$.
In particular, $\eqref{eq:AffDiscrim1}$ corresponds to the degree of $\D_{X_P}$ (Theorem $2.8$ in \cite[Chapter $9$]{GKZ94}).

In order to prove the second assertion, we use the following lemma.
\begin{lemma}\label{lem:alternate}
For an $n$-dimensional Delzant polytope $P$, let $\p^j P$ denote the union of codimension $j$-faces of $P$, i.e., 
$\p^jP=\bigcup_{F\in\mathcal F^j(P)}F$ where $\mathcal F^j(P)=\set{F\preceq P| \codim F=j}$. 
Let us denote the natural $(n-j)$-dimensional Lebesgue measure of $\p^j P$ by $d\sigma^{(j)}$.
For every triangulation $T$ of $(P,A)$, let $\eta_T$ be the Massive GKZ vector $\eta_T\in \Z^A$.
Then, for any $\varphi\in \R^A\cong \R^{N+1}$, we have
\begin{equation*}
\braket{\varphi,\eta_T}=\sum_{j=0}^n(-1)^j(n+1-j)!\int_{\p^jP}g_{\varphi,T}(\bs x)\,d\sigma^{(j)}.
\end{equation*}
\end{lemma}
Now we shall identify $\R^A$ with its dual space by the scalar product
\[
\braket{\varphi, \eta_T}=\sum_{\bs a\in A}\varphi(\bs a)\eta_T(\bs a).
\]
Then, Lemma \ref{lem:alternate} gives the equality
\begin{equation}\label{eq:alternate}
\sum_{\bs a\in A}\vp(\bs a)\eta_T(\bs a)=\sum_{j=0}^n(-1)^j(n+1-j)!\int_{\p^{j} P}g_{\vp,T}(\bs x)d\sigma^{(j)}.
\end{equation}
Let us consider the vector-valued linear function $\vp:P\to \R^n$, $\bs x \mapsto \bs x$ on each component.
Applying $\eqref{eq:alternate}$ to this linear function $\varphi$, we have
\begin{align*}
&\sum_{\bs a\in A}\vp(\bs a)\eta(\bs a)=\sum_{j=0}^n(-1)^j(n+1-j)!\int_{\p^j P}\vp(\bs x)d\sigma^{(j)} \\
\Leftrightarrow \quad  &\sum_{\bs a\in A}\eta(\bs a) \bs a=\sum_{j=0}^n(-1)^j(n+1-j)!\int_{\p^j P}\bs x\, d\sigma^{(j)} \qquad (\because ~ \vp(\bs x)=\bs x)\\
\Leftrightarrow \quad  &\sum_{\bs a\in A}\eta(\bs a) \bs a=\sum_{j=0}^n\sum_{F\in \mathcal F^j(P)}(-1)^{\mathrm{codim}\, F}(\dim F+1)!\int_{F}\bs x\, d\sigma_F.
\end{align*}
Hence, the assertion is verified.
\end{proof}
\begin{proof}[Proof of Lemma \ref{lem:alternate}]
This follows from the definition of the massive GKZ vector $\eta_T$ and the $T$-piecewise-linear function $g_{\varphi,T}$. We shall also use the fact that the integral of a linear function on a domain is equal to
the multiplication of the volume of a domain with the value of a linear function at the barycenter (cf. \cite[p. $758$]{Yotsu16}). 
In particular, we have
\begin{equation}\label{eq:lin_bary}
\int_C g_{\varphi, T}(\bs x)\, dv=\vol_\Z(C)\cdot g_{\varphi, T}(\bs b_C),
\end{equation}
for an $n$-dimensional simplex $C$. Here $\bs b_C$ denotes the barycenter of $C$.

Consequently, it suffices to show the equality 
\begin{equation}\label{eq:alternate2}
(n+1-j)!\int_{\p^jP}g_{\varphi, T}(\bs x)d\sigma^{(j)}=\braket{\varphi, \eta_{T,n-j}}\qquad j=0,1, \dots, n
\end{equation}
for proving the assertion. We inductively show $\eqref{eq:alternate2}$.

For $j=0$, this is Lemma $1.8$ in \cite[Chapter $7$]{GKZ94} because $\eta_{T,n}$ coincides with the GKZ vector $\Phi_A(T)$.

For $j=1$, we consider an $(n-1)$-dimensional simplex $C$ in $\p P$. Then, $\eqref{eq:lin_bary}$ implies
\begin{equation}\label{eq:lin_bary2}
\int_C g_{\varphi, T}(\bs x)\, d\sigma=\vol_\Z(C)\cdot g_{\varphi, T}(\bs b_C).
\end{equation}
Since the barycenter of a simplex is given by the average of its vertices, we find
\begin{align*}
g_{\varphi, T}(\bs b_C)=\sum_{\bs a_j\in \mathcal V(C)}\frac{g_{\varphi, T}(\bs a_j)}{n!}=\frac{1}{n!}\sum_{j\in C}\varphi_j.
\end{align*}
Plugging this into $\eqref{eq:lin_bary2}$, we have
\begin{equation}\label{eq:lin_bary3}
\int_C g_{\varphi, T}(\bs x)\, d\sigma=\frac{\vol_\Z(C)}{n!}\sum_{j\in C} \varphi_j.
\end{equation}
Now we show the equality $\eqref{eq:alternate2}$ from the left hand side to the right hand side.
In fact, $\eqref{eq:lin_bary3}$ implies that
\begin{align*}
n!\int_{\p P}g_{\varphi, T}(\bs x)\, d\sigma&=n!\sum_{C\in M_T^{n-1}}\int_C g_{\varphi, T}\, d\sigma=n!\sum_{C\in M_T^{n-1}}\frac{\vol_\Z(C)}{n!}\sum_{j\in C}\varphi_j\\
&=\sum_{C\in M_T^{n-1}}\sum_{j\in C}\varphi_j \vol_\Z(C)=\sum_{j\in J}\sum_{C\in M_T^{n-1}(\bs a_j)\atop j\in C}\varphi_j \vol_\Z(C)\bs e_j\\
&= \sum_{j\in J}\varphi_j(\eta_{T,n-1})\bs e_j=\braket{\varphi, \eta_{T, n-1}}.
\end{align*}
By the inductive argument, we obtain the desired equality $\eqref{eq:alternate2}$ for $j=2, 3, \dots, n$.
The assertion is verified.
\end{proof}

\subsection{Hurwitz vectors and Hurwitz polytopes}\label{sec:combinat}
In this section, we firstly recall the definition of the {\emph{Hurwitz vector}} as an analogue of the GKZ vector defined in \eqref{eq:GKZvec}.
Recently, it has been proved by Sano that the weight polytope of the Hurwitz form of  a smooth polarized toric variety $(X_P,L_P)$ coincides with the convex hull of the Hurwitz vectors for any triangulation
of $(P,A)$ \cite[Theorem $1.4$]{Sn23}. This can be interpreted as an analogue of the secondary polytope defined in $\eqref{def:SecPoly}$ (see, also Theorem \ref{thm:GKZ} (2)). 

Later on in Lemma \ref{lem:degHu}, we will compute the degree of the Hurwitz form of a smooth projective toric variety $X_P\subset \P^N$ in terms of the associated moment polytope $P\subset M_\Z$.
For the proof, we use the technique of $A$-discriminants in \cite[Chapter $9$]{GKZ94} and the binomial theorem (Claim \ref{claim:binomial}) which was firstly found in probability theory \cite{Fl67}.
For reader's convenience, we provide a proof of Claim \ref{claim:binomial} using Vandermonde's identity. 
One can see that the technique in the proof of Lemma \ref{lem:degHu} plays a key role in Section \ref{sec:KeyProps}. 

Finally in Proposition \ref{prop:HMPcriterion}, we see that the Paul's criterion that is a necessary and sufficient condition for a smooth polarized (toric) variety $(X, L^{\otimes i})$ to be numerically semistable.

Let $T$ be a triangulation of an $n$-dimensional polytope $(P,A)$ as in Section $\ref{sec:MassiveGKZ}$.
Let $\eta_{T,n}$ and $\eta_{T,n-1}$ be the vectors defined in $\eqref{eq:MasGKZ}$. Following Definition $1.3$ in \cite{Sn23}, we define the {\emph{Hurwitz vector}} by
\begin{equation}\label{def:HurVec}
 \xi_T:=n\eta_{T,n}-\eta_{T,n-1}.
\end{equation}
For the associated smooth projective toric variety $X_P\subset \P^N$, we define the {\emph{Hurwitz polytope}} by
\[
\Hu(P):=\Wt_{(\C^\times)^{N+1}}(\Hu_{X_P}).
\]
The following combinatorial description of $\Hu(P)$ has been found in \cite{Sn23}.
\begin{proposition}[Sano]\label{prop:HuPoly}
Let $\Hu(P)$ be the Hurwitz polytope of $P$ corresponding to a smooth projective toric variety $X_P\subset \P^N$.
Then, we have the equality in $\R^{N+1}$ such that
\[
\Hu(P)=\conv\set{\xi_T| T \text{ is a triangulation of } P},
\]where $\xi_T$ is the Hurwitz vector with respect to $T$.
\end{proposition}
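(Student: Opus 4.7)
The plan is to reduce this description of $\Hu(P)$ to the Gel'fand-Kapranov-Zelevinsky description of the discriminant polytope via the Cayley trick. By \cite{St17}, the Hurwitz form $\Hu_{X_P}$ in Stiefel coordinates coincides with the $X$-hyperdiscriminant $\D_{X_P \times \P^{n-1}}$. The Segre variety $X_P \times \P^{n-1} \subset \P^{n(N+1)-1}$ is itself a projective toric variety, namely the one associated with the Cayley configuration
\[
\tilde A := \set{(\bs a_i, \bs e_k) \mid 0 \leqslant i \leqslant N,\ 0 \leqslant k \leqslant n-1} \subset \Z^n \oplus \Z^{n-1},
\]
where $\bs e_0 := \bs 0$ and $\bs e_1, \dots, \bs e_{n-1}$ are the standard basis of $\Z^{n-1}$; its convex hull is the $(2n-1)$-dimensional polytope $\tilde P := P \times \D_{n-1}$. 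Since the Segre has vanishing dual defect \cite{WZ94}, Theorem $3.2$ in Chapter $11$ of \cite{GKZ94} applies and gives
\[
\Wt_{(\C^\times)^{|\tilde A|}}\bigl(\D_{X_P \times \P^{n-1}}\bigr) = \conv\set{\eta_{\tilde T} \mid \tilde T \text{ triangulation of } (\tilde P, \tilde A)}.
\]

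Next, the $(\C^\times)^{N+1}$-action on $\P^N$ extends to the Segre ambient space as the restriction of the full $(\C^\times)^{|\tilde A|}$-action via the inclusion $(t_0,\dots,t_N) \mapsto (t_i)_{0\leqslant i\leqslant N,\ 0\leqslant k\leqslant n-1}$. On character lattices this corresponds to the linear projection $\pi \colon \R^{|\tilde A|} \to \R^{N+1}$, $\pi(\chi)_i := \sum_{k=0}^{n-1}\chi_{i,k}$. Consequently
\[
\Hu(P) = \pi\bigl(\Wt_{(\C^\times)^{|\tilde A|}}(\D_{X_P \times \P^{n-1}})\bigr) = \conv\set{\pi(\eta_{\tilde T}) \mid \tilde T \text{ triangulation of } (\tilde P, \tilde A)}.
\]

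Finally, to match $\pi(\eta_{\tilde T})$ with Hurwitz vectors: given a triangulation $T$ of $(P,A)$, build the canonical product-type triangulation $\tilde T$ of $(\tilde P,\tilde A)$ by subdividing each prism $\sigma \times \D_{n-1}$ (for $\sigma \in T$) into its standard staircase of $(2n-1)$-simplices. Expanding $\eta_{\tilde T} = \sum_{j=0}^{2n-1}(-1)^{2n-1-j}\eta_{\tilde T,j}$ and applying $\pi$, one groups massive simplices of $\tilde T$ according to which face of $\tilde P$ they lie on (of the form $F \times \D_{n-1}$ with $F\preceq P$, or $F \times F'$ with $F'\prec \D_{n-1}$). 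The resulting alternating sum telescopes via a binomial-type cancellation over the faces of $\D_{n-1}$ (an analogue of the identity underlying Claim \ref{claim:binomial}), annihilating all contributions from massive simplices of $\tilde T$ of dimension $\leqslant 2n-3$. Only the contributions from the top two dimensions survive, and a direct count produces $\pi(\eta_{\tilde T}) = n\,\eta_{T,n} - \eta_{T,n-1} = \xi_T$.

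The main obstacle is this last combinatorial step: tracking the lattice-volume contributions of massive simplices of $\tilde T$ in each codimension under the projection requires delicate bookkeeping, and the binomial identity needed to enforce the cancellations will likely be the same Vandermonde-type identity already playing a key role in Lemma \ref{lem:degHu}. A subsidiary issue is verifying that every extreme point of $\Hu(P)$ in fact arises from a (regular) triangulation of $(P,A)$ via this Cayley lift—one must argue that regular triangulations of $(\tilde P,\tilde A)$ whose projections are extremal already coincide, up to $D$-equivalence, with those coming from triangulations of $(P,A)$.
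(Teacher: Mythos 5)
The paper does not actually prove this proposition; it is quoted from Sano's work \cite{Sn23} (Theorem $1.4$ there), so there is no internal proof to compare against. Judged on its own terms, your strategy --- identify $\Hu_{X_P}$ with the hyperdiscriminant $\D_{X_P\times\P^{n-1}}$ via Sturmfels, realize $X_P\times\P^{n-1}$ as the toric variety of the Cayley polytope $\tilde P=P\times\D_{n-1}$, invoke the GKZ description of the discriminant polytope by massive GKZ vectors, and push forward along the row-summation projection $\pi$ --- is the natural one, and it is exactly the device the paper itself uses in Lemma \ref{lem:degHu} and Proposition \ref{prop:AffHurwitz} to extract scalar invariants of $\Hu(P)$ from $Q=P\times\D_{n-1}$.

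However, the proposal defers precisely the two steps that constitute the proof. First, the identity $\pi(\eta_{\tilde T})=n\,\eta_{T,n}-\eta_{T,n-1}$ is asserted to follow from a ``binomial-type cancellation'' but is not carried out; this is the entire content of the proposition, since $\xi_T$ is \emph{defined} by that formula and the claim is that it computes the projected massive GKZ vector. Note that the paper needs a full page and Claim \ref{claim:binomial} (Vandermonde) merely to verify the scalar shadow of this identity, namely $\sum_i(\xi_T)_i=n(n+1)!\vol(P)-n!\vol(\p P)$; the vector-valued, per-coordinate version for an arbitrary staircase triangulation of each prism $\sigma\times\D_{n-1}$ requires genuinely more bookkeeping (one must check that the count of massive $j$-simplices of $\tilde T$ through a given lifted vertex $(\bs a_i,\bs e_k)$, summed over $k$, organizes itself by the faces $F\times F'$ exactly as claimed, and that the answer is independent of the choice of staircase). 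Second, the equality of convex hulls is only argued in one direction: you produce $\xi_T\in\Hu(P)$ for product-type $\tilde T$, but $\conv\{\eta_{\tilde T}\}$ runs over \emph{all} triangulations of $(\tilde P,\tilde A)$, almost none of which are of product type, and you must show their projections introduce no new extreme points. The standard way to close this is to observe that every vertex of $\pi(\D(\tilde P))$ is exposed by a functional of the form $\pi^*\psi$ with $\psi\in W(P)$, whose induced regular subdivision of $(\tilde P,\tilde A)$ is constant along the $\D_{n-1}$ fibers and hence refines to a product-type triangulation; you flag this but do not supply the argument. Until both steps are written out, the proof is a correct plan rather than a proof.
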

As we already mentioned in Section \ref{sec:Hurwitz}, the hyperdiscriminant form (resp. hyperdiscriminant polytope) coincides with the Hurwitz form (resp. Hurwitz polytope).
Hence, numerical semistability of a polarized smooth toric variety can be reduced to the combinatorial description of the pair of $(\sec(P), \Hu(P))$.

The following lemma is the first key step to prove our main theorem (Theorem \ref{thm:main}) in this paper.
\begin{lemma}\label{lem:degHu}
Let $X_P^n\subset \P^N$ be the smooth projective toric variety associated with an $n$-dimensional Delzant polytope $(P,A)$, where $A=P\cap \Z^n=\set{\bs a_0, \dots, \bs a_N}$ is the set of the lattice points in $P$.
Let $\Hu_{X_P}$ be the Hurwitz form of $X_P$. Then the degree of $\Hu_{X_P}$ equals 
\[
n(n+1)!\vol(P)-n!\vol(\p P).
\]
\end{lemma}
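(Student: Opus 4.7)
The approach is to reduce the computation of $\deg \Hu_{X_P}$ to the degree of a GKZ-style $A$-discriminant via a Segre product, and then to collapse the resulting alternating sum over faces of a product polytope down to just the top-dimensional face and the facets of $P$ by means of a Vandermonde-type identity.

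First, by Sturmfels' theorem \cite{St17} recalled in Section~\ref{sec:Hurwitz}, one has $\Hu_{X_P} = \D_{X_P \times \P^{n-1}}$ in Stiefel coordinates. Since $\P^{n-1}$ is the toric variety of the standard simplex $\Delta_{n-1}$, the Segre embedding realizes $X_P \times \P^{n-1}$ as the projective toric variety associated with the point configuration $A' := A \times \{\bs e_1, \ldots, \bs e_n\}$, whose convex hull is the $(2n-1)$-dimensional product polytope $P \times \Delta_{n-1}$. Applying the degree formula for $A$-discriminants (Theorem~2.8 of \cite[Ch.~9]{GKZ94}) to $A'$ and then factoring each face of the product uniquely as $G = F \times F'$ with $F \preceq P$ and $F' \preceq \Delta_{n-1}$ — so that codimensions and dimensions add and intrinsic volumes multiply — together with the fact that a $k$-face of $\Delta_{n-1}$ is a standard $k$-simplex of intrinsic volume $1/k!$ with multiplicity $\binom{n}{k+1}$, one obtains
\begin{equation*}
\deg \Hu_{X_P} = \sum_{F \preceq P}(-1)^{n-\dim F}\vol(F)\, S(\dim F), \qquad S(d) := \sum_{k=0}^{n-1}(-1)^{n-1-k}\binom{n}{k+1}\frac{(d+k+1)!}{k!}.
\end{equation*}

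The heart of the proof is the combinatorial evaluation of $S(d)$: the aim is to show that $S(d) = 0$ for $d \leq n-2$, $S(n-1) = n!$, and $S(n) = n(n+1)!$. After the substitution $j = k+1$ and the manipulation $\binom{n}{j}(d+j)!/(j-1)! = n \cdot d! \binom{n-1}{j-1}\binom{d+j}{d}$, the sum becomes
\begin{equation*}
S(d) = n \cdot d! \sum_{i=0}^{n-1}(-1)^{n-1-i}\binom{n-1}{i}\binom{d+1+i}{d},
\end{equation*}
which is the $(n-1)$-th finite difference of the polynomial $i \mapsto \binom{d+1+i}{d}$ of degree $d$. The binomial identity $\sum_{i=0}^{m}(-1)^{m-i}\binom{m}{i}\binom{x+i}{k} = \binom{x}{k-m}$ — a consequence of Vandermonde's identity, and what I expect is essentially the content of Claim~\ref{claim:binomial} — delivers the stated three values immediately ($0$, $1$, $n+1$ at $k = d$, $d = n-1$, $d = n$ respectively). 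Substituting back, only $F = P$ and the facets of $P$ contribute, yielding $\deg \Hu_{X_P} = n(n+1)!\vol(P) - n!\vol(\partial P)$.

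The main obstacle is the combinatorial collapse in the last step — the vanishing $S(d) = 0$ for $d \leq n-2$ — which is precisely what reduces the full alternating sum over the face lattice of $P$ to just $\{P\} \cup \{\text{facets of } P\}$, and which is where the \emph{binomial theorem in probability theory} advertised in the introduction plays its essential role. Once this vanishing is in hand, the computation of the two surviving contributions $n(n+1)!\vol(P)$ and $-n!\vol(\partial P)$ is immediate.
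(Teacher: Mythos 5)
Your proposal is correct and follows essentially the same route as the paper: both reduce $\deg\Hu_{X_P}$ to the $A$-discriminant degree formula applied to the product polytope $P\times\Delta_{n-1}$, decompose its faces as products, and collapse the resulting alternating sum via a Vandermonde/finite-difference binomial identity equivalent to Claim~\ref{claim:binomial} (your grouping by faces $F\preceq P$ into the single function $S(d)$ is just a reorganization of the paper's coefficient-of-$\vol(\partial^i P)$ computation). No gaps.
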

\begin{proof}
Let $\D_{X_P}$ be the $A$-discriminant of $(P,A)$. 
For the proof, we shall use Theorem $2.8$ in \cite[Chapter $9$]{GKZ94}, which states that
\begin{equation}\label{eq:degDiscrim}
\deg \D_{X_P}=\sum_{F\preceq P}(-1)^{\codim F}(\dim F+1)!\vol(F),
\end{equation}
where $F$ runs over all nonempty faces of $P$.

Let $\D_{n-1}$ be the $(n-1)$-dimensional standard simplex with $\vol(\D_{n-1})=\frac{1}{(n-1)!}$.
Let $Q=P\times \D_{n-1}$ be the products polytope of $P$ and $\D_{n-1}$ which is the $(2n-1)$-dimensional lattice polytope.
Since any nonempty face $F$ of $Q$ is obtained by the product of a nonempty face $F_1\preceq P$ and a nonempty face $F_2\preceq \D_{n-1}$,
$F$ can be written in the form of $F=F_1\times F_2$.
Thus, we can apply a similar argument in \cite{OSY23, Yotsu23} for the faces of $Q$ as follows: for a fixed $j$ with $0\leqslant j \leqslant 2n-1$,
let $\p^jQ$ be the union of codimension $j$-faces of $Q$ as in Lemma \ref{lem:alternate}. In particular, we remark that $\p^0Q=Q$ and $\p^{2n-1}Q=\mathcal V(Q)$.
We also use the same terminologies for $\p^i P$ and $\p^k\D_{n-1}$ respectively. Then, we readily see that
\begin{equation}\label{eq:splitting}
\p^jQ=\smallcup_{i+k=j}\p^i P\times \p^k\D_{n-1}, 
\end{equation}
where $i$ and $j$ run over all $0\leqslant i \leqslant n$ and $0\leqslant k \leqslant n-1$ satifying $i+k=j$.
Especially, the equalities
\[
\p Q=(\p P\times \D_{n-1})\cup(P\times \p\D_{n-1}) \quad \text{and} \quad \mathcal V(Q)=\mathcal V(P)\times \mathcal V(\D_{n-1})
\]
hold. Hence, the straightforward computations with $\eqref{eq:splitting}$ give that
\begin{align}
\begin{split}\label{eq:VolQ}
\vol(\p^jQ)&=\sum_{i+k=j}\vol(\p^i P)\cdot \vol(\p^k \D_{n-1})\\
&=\sum_{i+k=j} {{n}\choose{n-k}}\frac{1}{(n-k-1)!}\vol(\p^i P),
\end{split}
\end{align} 
where we used $\vol(\p^k \D_{n-1})= {{n}\choose{n-k}}\frac{1}{(n-k-1)!}$ in the last equality.

Now we regard the associated smooth projective toric variety $Y_Q$ as $X_P\times \P^{n-1}$. Let us denote $Q\cap \Z^{2n+1}$ by $\widetilde A$.
Applying $\eqref{eq:degDiscrim}$ into the $\widetilde A$-discriminant of $(Q,\widetilde A)$, we calculate the degree of $\Hu_{X_P}$ such as
\begin{align}\label{eq:degHu2}
\begin{split}
\deg(\Hu_{X_P})&=\deg(\D_{X\times \P^{n-1}})\\
&=\deg(\D_{Y_Q})=\sum_{j=0}^{2n-1}(-1)^j(2n-j)\vol(\p^j Q).
\end{split}
\end{align}
Plugging $\eqref{eq:VolQ}$ into $\eqref{eq:degHu2}$, we can compute $\deg\Hu_{X_P}$ in terms of the moment polytope $P$, instead of calculating it in terms of $Q$.
By simplifying each coefficient of $\vol(\p^i P)$ in
\[
\sum_{j=0}^{2n-1}(-1)^j(2n-j)\sum_{i+k=j}{{n}\choose{n-k}}\frac{1}{(n-k-1)!}\vol(\p^iP),
\]
we find the following:

\vskip 4pt

\noindent (I) $\mathrm{Coeff}(\vol(P))$. 
\begin{align*}
\sum_{k=1}^n(-1)^{n-k}\frac{(n+k)!}{(k-1)!}{{n}\choose{k}}&=\sum_{k=1}^n(-1)^{n-k}\cdot (n+1)! \cdot \frac{(n+k)!}{(k-1)!(n+1)!}{{n}\choose{k}}\\
&=(n+1)!\sum_{k=1}^n(-1)^{n-k}{{n}\choose{k}}{{n+k}\choose{k-1}}.
\end{align*} 

\noindent (II) $\mathrm{Coeff}(\vol(\p P))$. 
\begin{align*}
-\sum_{k=1}^n(-1)^{n-k}\frac{(n+k-1)!}{(k-1)!}{{n}\choose{k}}&=-\sum_{k=1}^n(-1)^{n-k}\cdot n! \cdot \frac{(n+k-1)!}{(k-1)!n!}{{n}\choose{k}}\\
&=-n!\sum_{k=1}^n(-1)^{n-k}{{n}\choose{k}}{{n+k-1}\choose{k-1}}.
\end{align*} 

\noindent (III) $\mathrm{Coeff}(\vol(\p^i P))$ for $2\leqslant i \leqslant n$. 
\begin{align*}
\sum_{k=1}^n(-1)^{n-k}\frac{(n+k-i)!}{(k-1)!}{{n}\choose{k}}&=\sum_{k=1}^n(-1)^{n-k}\cdot (n-i+1)! \cdot \frac{(n+k-i)!}{(k-1)!(n-i+1)!}{{n}\choose{k}}\\
&=(n-i+1)!\sum_{k=1}^n(-1)^{n-k}{{n}\choose{k}}{{n+k-i}\choose{k-1}}.
\end{align*} 
We complete the proof by the following beautiful equalities of combinatorics used in probability theory.
The following claim can be seen as a slight generalization of the original one (cf. $(12.13)$ in \cite[p. $64$]{Fl67}). 
\begin{claim}\label{claim:binomial}
Let $m$ be a fixed positive integer. For any integer $0\leqslant n \leqslant m$, we have the equality
\begin{equation}\label{eq:binomial}
\sum_{k=1}^n(-1)^{n-k}{{n}\choose{k}}{{m+k}\choose{m+1}}={{m}\choose{n-1}}.
\end{equation}
\end{claim}
\begin{proof}[Proof of Claim \ref{claim:binomial}]
Firstly, we recall that for any integer $a\in \Z$, we have the identity
\begin{equation}\label{eq:binomial0}
{{-a}\choose{k}}=(-1)^k{{a+k-1}\choose{k}},
\end{equation}
as seen in \cite[p. $63$, $(12.4)$]{Fl67}. This yields that
\begin{equation}\label{eq:binomial2}
{{-m-2}\choose{k-1}}=(-1)^{k-1}{{m+k}\choose{k-1}}.
\end{equation}
Then, we can compute the left hand side of $\eqref{eq:binomial}$ such as
\begin{align}
\sum_{k=1}^n(-1)^{n-k}{{n}\choose{k}}{{m+k}\choose{m+1}}&=\sum_{k=1}^n(-1)^{n-k}{{n}\choose{k}}{{m+k}\choose{k-1}} \notag \\
&=\sum_{k=1}^n(-1)^{n-1}{{n}\choose{k}}{{-m-2}\choose{k-1}} \qquad (\because~ \eqref{eq:binomial2} ) \notag  \\
&=(-1)^{n-1}\sum_{k=1}^n{{n}\choose{n-k}}{{-m-2}\choose{k-1}}  \notag  \\
&=(-1)^{n-1}\sum_{k=0}^{n-1}{{n}\choose{n-1-k}}{{-m-2}\choose{k}}. \label{eq:binomial3}
\end{align}
Using Vandermonde's identity
\[
{{a+b}\choose{r}}=\sum_{k=0}^r{{a}\choose{k}}{{b}\choose{r-k}}
\]
for $r=n-1$, $a=-m-2$ and $b=n$, we obtain that
\begin{equation}\label{eq:binomial4}
\eqref{eq:binomial3}=(-1)^{n-1}{{n-m-2}\choose{n-1}}={{m}\choose{n-1}}.
\end{equation}
In the last equality in $\eqref{eq:binomial4}$, we again used $\eqref{eq:binomial0}$ for $a=m+2-n$ and $k=n-1$.
This completes the proof of Claim \ref{claim:binomial}.
\end{proof}
Now we finish the proof Lemma \ref{lem:degHu} as follows:

\vskip 4pt

\noindent (I) For $\mathrm{Coeff}(\vol(P))$, we use Claim \ref{claim:binomial} for $m=n$. Then, we find
\begin{align*}
\mathrm{Coeff}(\vol(P))&=(n+1)!\sum_{k=1}^n(-1)^{n-k}{{n}\choose{k}}{{n+k}\choose{k-1}}\\
&=(n+1)!\sum_{k=1}^n(-1)^{n-k}{{n}\choose{k}}{{n+k}\choose{n+1}}\\
&=(n+1)!{{n}\choose{n-1}}=(n+1)!\cdot n.
\end{align*}

\noindent (II) Similar to Case (I), we use Claim \ref{claim:binomial} for $m=n-1$. This yields that
 \begin{align*}
\mathrm{Coeff}(\vol(\p P))&=-n!\sum_{k=1}^n(-1)^{n-k}{{n}\choose{k}}{{n+k-1}\choose{k-1}}\\
&=-n!\sum_{k=1}^n(-1)^{n-k}{{n}\choose{k}}{{n-1+k}\choose{n}}\\
&=-n!{{n-1}\choose{n-1}}=-(n!).
\end{align*}

\noindent (III) Finally, we use Claim \ref{claim:binomial} for $m=n-i$ with $2\leqslant i \leqslant n$.
Note that ${{a}\choose{b}}=0$ for $0\leqslant a <b$. Thus, we see that
 \begin{align*}
\mathrm{Coeff}(\vol(\p^i P))&=(n+1-i)!\sum_{k=1}^n(-1)^{n-k}{{n}\choose{k}}{{n+k-i}\choose{k-1}}\\
&=(n+1-i)!\sum_{k=1}^n(-1)^{n-k}{{n}\choose{k}}{{n+k-i}\choose{n-i+1}}\\
&=(n+1-i)!{{n-i}\choose{n-1}}=0.
\end{align*}
The assertion is verified.
\end{proof}

\subsection{Paul's criterion for numerical semistability}\label{sec:HMPcriteria}
We finish this section with the Hilbert-Mumford-Paul (HMP) criterion for numerical semistability of a polarized toric variety.
For this, we recall the {\em{Ehrhart polynomial}} $E_P(t)$ of $P$, which counts the number of lattice points in the $i$-th dilation of an $n$-dimensional lattice Delzant polytope $P$ for $i\in \N$:
\[
E_P(i)=\sharp (P\cap(\Z/i)^n).
\] 
It is well known that the Ehrhart polynomial is a degree $n$ polynomial and can be written in the form of 
\[
E_P(t)=\vol(P)t^n+\frac{\vol(\p P)}{2}t^{n-1}+\dots +1.
\]
As we have seen in Section \ref{sec:Pss}, we then naturally consider $G$-representation for the complex torus $G=(\C^\times)^{E_P(i)}$.
Moreover, the character group of $(\C^\times)^{E_P(i)}$ can be identified with the lattice $\Set{P\cap(\Z/i)^n \to \Z}\cong \Z^{E_P(i)}$. Denoting by
\[
W(iP):=\Set{P\cap(\Z/i)^n \to \R}\cong \R^{E_P(i)},
\]
we identify $W(iP)$ with its dual space by the scalar product
\[
\bracket{\varphi, \psi}=\sum_{\bs a\in A}\varphi(\bs a)\psi(\bs a).
\]
Following GKZ theory \cite[p.$221$, $(1.6)$]{GKZ94}, we define a concave piecewise linear function on $P$ as follows.
We also refer the reader to Section $4.2$ in \cite{Don02} for another viewpoint of this argument. 

Let us fix $\varphi\in W(iP)$. Let $G_{\varphi}\subset \R^n\times \R\cong \R^{n+1}$ be the convex hull of the union of vertical half-lines:
\begin{equation}\label{def:graph}
G_{\varphi}:=\conv\Set{\bigcup_{\bs a\in P\cap(\Z/i)^n}\set{(\bs a,t)|t\leqslant \varphi(\bs a)}}.
\end{equation}
Then, the upper boundary of $G_\varphi$ can be regarded as the graph of a piecewise linear function $g_\varphi:P\to \R$. Specifically, we define $g_\varphi$ by
\[
g_\varphi(\bs x)=\max\set{t| (\bs x, t)\in G_\varphi}
\]
for any $\bs x \in P$. Remark that the function $g_\varphi$ is concave by construction.
For later use, we also define the set of piecewise linear functions by
\begin{equation}\label{eq:PLfuns}
PL(P;i)=\set{g_\varphi| \varphi\in W(iP)}.
\end{equation}
In \cite[Definition $13$]{Sean12}, Paul defined the Futaki invariant, which is said to be the {\emph{Futaki-Paul invariant}}, for the notion of P-stability. He also proved that the Futaki-Paul invariant is less than or equal to zero if and only if
a projective variety $X\subset \P^N$ is numerically semistable \cite[Proposition $2.10$]{Sean12}.
We call this criteria the {\emph{Hilbert-Mumford-Paul (HMP) criterion}}.

In the toric case, one can see that the Futaki-Paul invariant $F_P(\varphi)$ can be explicitly written in terms of the moment polytope $P$.
 Specifically, $F_P(\varphi)$ is the difference between the Chow weight and the Hurwitz weight as follows:
 \begin{align}\label{def:FPinv}
 \begin{split}
F_P(\varphi)&:=
\deg(\Hu_{X_P}(i))\max\set{\bracket{\varphi,\psi}|\psi \in \sec(iP)}\\
&\qquad \quad  -\deg(R_{X_P}(i))\max\set{\bracket{\varphi,\psi}|\psi \in \Hu(iP)}
\end{split}
\end{align}
for any $\varphi\in W(iP)$, where $\sec(iP)$ and $\Hu(iP)$ denote the secondary polytope and the Hurwitz polytope of $iP$ respectively. 
Moreover, the HMP criterion in the toric setting is the following.
\begin{proposition}[Paul]\label{prop:HMPcriterion}
For a positive integer $i$, let $P$ and $E_P(i)$ as above. Setting by $N_i:=E_P(i)$, we consider the diagonal subtorus $H(iP)$ of $(\C^\times)^{N_i}$ defined by
\[
H(iP)=\Set{(t_1,\dots, t_{N_i})\in (\C^\times)^{N_i}|\prod_{j=1}^{N_i}t_j=1}.
\]
Then, the pair $\left(R_{X_P}^{\deg \Hu_{X_P}(i) },\Hu_{X_P}^{\deg R_{X_P}(i)}\right)$ is $H(iP)$-semistable if and only if $F_P(\varphi)$ defined in $\eqref{def:FPinv}$
is less than or equal to zero for any $\varphi\in W(iP)$.
\end{proposition}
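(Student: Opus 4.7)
The plan is to translate $H(iP)$-semistability of the pair into a polytope inclusion via the torus equivalences of Remark~\ref{rem:PSZ}, then invoke the support-function characterization of convex inclusion and recognize it as the Futaki--Paul inequality. Set $v := R_{X_P}(i)^{\deg \Hu_{X_P}(i)}$ and $w := \Hu_{X_P}(i)^{\deg R_{X_P}(i)}$. By the equivalence of conditions (1) and (2) in Remark~\ref{rem:PSZ}, $H(iP)$-semistability of $(v,w)$ is the same as the weight-polytope inclusion $\Wt_{H(iP)}(v) \subseteq \Wt_{H(iP)}(w)$. Using the Kapranov--Sturmfels--Zelevinsky identification $\Wt_{(\C^\times)^{N_i}}(R_{X_P}(i)) = \sec(iP)$ together with Sano's Proposition~\ref{prop:HuPoly} giving $\Wt_{(\C^\times)^{N_i}}(\Hu_{X_P}(i)) = \Hu(iP)$, and the elementary fact that raising a vector to the $k$-th power dilates its weight polytope by $k$ (weights of a tensor power add, and the Minkowski $k$-fold sum of a convex polytope equals its scalar dilation by $k$), one obtains
\[
\Wt_{(\C^\times)^{N_i}}(v) = \deg(\Hu_{X_P}(i))\cdot \sec(iP), \qquad \Wt_{(\C^\times)^{N_i}}(w) = \deg(R_{X_P}(i))\cdot \Hu(iP),
\]
and the $H(iP)$-weight polytopes are the images under the projection $\pi\colon \R^{N_i}\to \R^{N_i}/\R\cdot \bs 1$ to the character lattice of $H(iP)$.

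Next I would apply the standard convex-duality criterion $P\subseteq Q \Leftrightarrow h_P\leqslant h_Q$, where $h_P(\varphi) = \max\{\langle \varphi,\psi\rangle : \psi \in P\}$ is the support function. For any $\varphi \in W(iP)$ annihilating the diagonal $\R \cdot \bs 1$, the support function of $\pi(Q)$ coincides with that of $Q$, so the projected inclusion $\Wt_{H(iP)}(v)\subseteq \Wt_{H(iP)}(w)$ is equivalent to
\[
\deg(\Hu_{X_P}(i))\cdot h_{\sec(iP)}(\varphi) \;\leqslant\; \deg(R_{X_P}(i))\cdot h_{\Hu(iP)}(\varphi)
\]
for every such $\varphi$, which is precisely the inequality $F_P(\varphi)\leqslant 0$ from \eqref{def:FPinv}. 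To lift the inequality from zero-sum $\varphi$ to arbitrary $\varphi \in W(iP)$, I would use that by \eqref{eq:AffChow} applied to $iP$ the coordinate sum $\sum_j \psi_j$ equals $\deg(R_{X_P}(i))$ throughout $\sec(iP)$, and by the analogous affine-hull relation for $\Hu(iP)$ (derived along the lines of Proposition~\ref{prop:AffDiscrim}, with total weight equal to $\deg(\Hu_{X_P}(i))$ thanks to Lemma~\ref{lem:degHu}) it equals $\deg(\Hu_{X_P}(i))$ throughout $\Hu(iP)$. A short calculation then yields $F_P(\varphi+c\bs 1) = F_P(\varphi)$ for every constant $c$, so the condition on zero-sum $\varphi$ is equivalent to the condition on all of $W(iP)$.

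The main technical points I expect to handle carefully are (i) making the projection $\pi$ to the character lattice of the diagonal subtorus $H(iP)$ precise and verifying that the support-function criterion transports cleanly to the projected polytopes via the annihilator identification, and (ii) pinning down the constant-sum affine relation for the Hurwitz polytope so that the shift invariance $F_P(\varphi+c\bs 1)=F_P(\varphi)$ holds. Once these are in place, the proof reduces to a direct assembly of convex duality together with the definitions of $\sec(iP)$, $\Hu(iP)$, and $F_P$.
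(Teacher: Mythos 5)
Your proposal is correct, and it is worth noting that the paper itself gives no proof of this proposition: it is stated as Paul's criterion transplanted to the toric torus $H(iP)$, with the justification deferred to \cite[Proposition 2.10]{Sean12} and to the equivalences recorded (also without proof) in Remark \ref{rem:PSZ}. Your reconstruction follows exactly the route that the surrounding material makes available: the torus case of Remark \ref{rem:PSZ} converts $H(iP)$-semistability of $(v,w)$ into the weight-polytope inclusion, the Newton-polytope-of-a-product identity gives $\Wt(v)=\deg(\Hu_{X_P}(i))\cdot\sec(iP)$ and $\Wt(w)=\deg(R_{X_P}(i))\cdot\Hu(iP)$, and convex duality turns the inclusion into the support-function inequality $F_P(\varphi)\leqslant 0$. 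Your two flagged technical points are both sound: the projection to $\R^{N_i}/\R\cdot\bs 1$ interacts with support functions exactly as you describe, and the shift invariance $F_P(\varphi+c\bs 1)=F_P(\varphi)$ needs only that the coordinate sum is constant on each polytope (equal to the respective degrees), which follows already from homogeneity of $R_{X_P}(i)$ and $\Hu_{X_P}(i)$ in the Stiefel coordinates and is confirmed by \eqref{eq:AffChow} and Proposition \ref{prop:AffHurwitz}; in particular there is no circularity, since Proposition \ref{prop:AffHurwitz} is proved independently of this criterion. The one caveat is that your argument inherits whatever burden of proof attaches to the unproved equivalence (1) $\Leftrightarrow$ (2) of Remark \ref{rem:PSZ}, but that is precisely the black box the paper itself relies on.
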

Hence, it is important to specify the affine hull of $\Hu(P)$ as well as $\max\set{\bracket{\varphi,\psi}|\psi \in \Hu(P)}$.
We shall investigate the combinatorial description of them in Section \ref{sec:KeyProps}.

\section{The key propositions}\label{sec:KeyProps}

The goal of this section is to show the following statement of the affine hull of the Hurwitz polytope of a smooth projective toric variety.
We shall prove Proposition \ref{prop:AffHurwitz} by applying techniques used in the proof of Lemma \ref{lem:degHu}. 
\begin{proposition}\label{prop:AffHurwitz} 
Let $A=\set{\bs a_0, \ldots , \bs a_N}\subset M_\Z$ be a finite set of integer vectors which satisfies $(*)$. Let $P$ denote the convex hull of $A$ in $M_\R$.
Then the affine hull of the Hurwitz polytope $\Hu(P)$ of $P$ in $W(P)\cong \R^{N+1}$ is equal to
\begin{align}\label{eq:AffHurwitz}
\mathrm{Aff}_\R(\Hu(P))  
&=\Set{(\psi_0, \ldots ,\psi_N)\in \R^{N+1}|
\begin{matrix}
\displaystyle \sum_{i=0}^N\psi_i=n(n+1)!\vol(P)-n!\vol(\p P) \\
\displaystyle \sum_{i=0}^N\psi_i\bs a_i=n(n+1)!\int_P\bs x \,dv-n!\int_{\p P}\bs x\,d\sigma 
\end{matrix}
}.
\end{align}
\end{proposition}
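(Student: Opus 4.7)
The plan is to mirror the strategy used for Proposition~\ref{prop:AffDiscrim}: verify that both linear conditions on the right-hand side of $\eqref{eq:AffHurwitz}$ hold at every vertex $\xi_T$ of $\Hu(P)$, then match dimensions to conclude that the affine hull coincides with the prescribed subspace. By Proposition~\ref{prop:HuPoly} we have $\Hu(P)=\conv\{\xi_T\mid T\text{ is a triangulation of }(P,A)\}$ with $\xi_T=n\eta_{T,n}-\eta_{T,n-1}$, so it is enough to compute $\sum_i(\xi_T)_i$ and $\sum_i(\xi_T)_i\bs a_i$ for an arbitrary triangulation $T$.

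For the scalar identity I would split into the two pieces. A double count over incidences of lattice points with maximal simplices, together with $\vol_\Z=n!\vol$, gives $\sum_i(\eta_{T,n})_i=\sum_\sigma (n+1)\vol_\Z(\sigma)=(n+1)!\vol(P)$. The parallel count over massive $(n-1)$-simplices, each of which is contained in $\p P$ and has $n$ vertices, gives $\sum_i(\eta_{T,n-1})_i=n\cdot(n-1)!\vol(\p P)=n!\vol(\p P)$. Combining with coefficients $(n,-1)$ yields $\sum_i(\xi_T)_i=n(n+1)!\vol(P)-n!\vol(\p P)$, which is precisely $\deg(\Hu_{X_P})$ as computed in Lemma~\ref{lem:degHu}.

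For the vector identity I would invoke the barycenter identity already used in Lemma~\ref{lem:alternate}: for any $k$-simplex $\sigma$ with barycenter $\bs b_\sigma$,
\[
\sum_{\bs a\in \mathcal V(\sigma)}\bs a=(k+1)\bs b_\sigma,\qquad \int_\sigma\bs x\,d\mu=\vol(\sigma)\,\bs b_\sigma.
\]
Feeding this into the double sums, the maximal simplices of $T$ triangulate $P$ and give $\sum_i(\eta_{T,n})_i\bs a_i=(n+1)!\int_P\bs x\,dv$, while the massive $(n-1)$-simplices triangulate $\p P$ and give $\sum_i(\eta_{T,n-1})_i\bs a_i=n!\int_{\p P}\bs x\,d\sigma$. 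The linear combination $n\eta_{T,n}-\eta_{T,n-1}$ then reproduces exactly the vector identity in $\eqref{eq:AffHurwitz}$.

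These two identities place $\Hu(P)$ inside an affine subspace of $\R^{N+1}$ of dimension at most $N-n$. To conclude equality of affine hulls I would invoke $\dim\Hu(P)=N-n$, parallel to $\dim\Ch(P)=\dim\D(P)=N-n$ as already used in Theorem~\ref{thm:GKZ} and Proposition~\ref{prop:AffDiscrim}. The main obstacle is precisely this dimension count: the two scalar constraints alone only bound $\dim\Hu(P)$ from above, and the reverse inequality requires either Sano's structural description of $\Hu(P)$ in \cite{Sn23} or an explicit construction of enough triangulations whose Hurwitz vectors $\xi_T$ are affinely independent in $W(P)$.
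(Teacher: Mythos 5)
Your proof is correct, but it takes a genuinely different route from the paper. The paper does not work with Sano's vertices $\xi_T$ directly: it identifies $\Hu_{X_P}$ with the hyperdiscriminant of the Segre product, passes to the $(2n-1)$-dimensional product polytope $Q=P\times \D_{n-1}$, applies the discriminant affine-hull formula (Proposition \ref{prop:AffDiscrim}) to $Q$, and then collapses the resulting alternating sum over the strata $\p^jQ=\bigcup_{i+k=j}\p^iP\times\p^k\D_{n-1}$ using the binomial identity of Claim \ref{claim:binomial}, exactly as in the degree computation of Lemma \ref{lem:degHu}. Your argument instead starts from Proposition \ref{prop:HuPoly} and evaluates $\sum_i(\xi_T)_i$ and $\sum_i(\xi_T)_i\bs a_i$ for $\xi_T=n\eta_{T,n}-\eta_{T,n-1}$ by a double count over vertex--simplex incidences together with the barycenter identity; this is more elementary, avoids the product polytope and the combinatorial identity entirely, and makes it transparent why the coefficients $n(n+1)!$ and $-n!$ appear (they are $(n+1)\cdot n!\cdot n/1$ and $n\cdot(n-1)!$ from the incidence counts). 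Your normalizations check out: $\vol_\Z(\sigma)=n!\vol(\sigma)$ for maximal simplices and $\vol_\Z(C)=(n-1)!\vol(C)$ for massive $(n-1)$-simplices, which triangulate $\p P$, reproduce both stated constants. What the paper's route buys is uniformity --- the same product-polytope computation delivers Lemma \ref{lem:degHu}, Proposition \ref{prop:AffHurwitz}, and later Proposition \ref{prop:MaxHurwitz} from one template; what yours buys is brevity and independence from Claim \ref{claim:binomial}. As for the dimension count you flag as the main obstacle: the paper is no more explicit than you are --- it simply asserts ``in the same manner as in the proof of Lemma \ref{prop:AffDiscrim}, it is enough to show that the equalities hold on $\Hu(P)$,'' inheriting $\dim\Hu(P)=N-n$ without proof, so your honest acknowledgement that this must come from Sano's structural results (or an explicit affinely independent family of $\xi_T$) is, if anything, more careful than the original.
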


\begin{proof}
In the same manner as in the proof of Lemma \ref{prop:AffDiscrim}, it is enough to show that the equalities in $\eqref{eq:AffHurwitz}$ hold on $\Hu(P)$.

The first equality is an immediate consequence of Lemma \ref{lem:degHu}. 

For the second equality, we again consider the $(2n-1)$-dimensional product polytope $Q=P\times \D_{n-1}$ as in the proof of Lemma \ref{lem:degHu}.
For each $0\leqslant j \leqslant n$, we denote the barycenter of $\p^j P$ by $\bs b_{\p^j P}$:
\[
\bs b_{\p^j P}:=\int_{\p^j P}\bs x\, d\sigma^{(j)}=\sum_{F\in \mathcal{F}^j(P)}\bs b_F.
\]
In particular, for $j=0$ and $1$, we see that
\[
\bs b_{\p^0 P}=\bs b_P=\int_P \bs x\, dv \qquad \text{and} \qquad \bs b_{\p P}=\int_{\p P} \bs x\, d\sigma =\sum_{F\in \mathcal{F}(P)}\bs b_F
\]
as usual. 

Let $\widetilde{\bs x}=(\bs x, \bs 1)$ be the coordinate function of $Q$ with the base coordinate $\bs x=(x_1, \dots , x_n)$
and the fiber coordinate $\bs 1=(\,  \underbrace{1, \dots, 1}_{\text{$(n-1)$-th}}\, )$. By adapting Lemma \ref{prop:AffDiscrim} to the product polytope $Q$, we have
\begin{equation}\label{eq:alternateQ}
\sum_{i=0}^N\psi_i \bs a_i= \sum_{j=0}^{2n-1}(-1)^{2n-j}(2n-j)!\int_{\p^j Q}\widetilde{\bs x}\, d\sigma^{(j)}_Q
\end{equation}
by $\eqref{eq:AffDiscrim2}$. Especially, $\eqref{eq:AffDiscrim1}$ gives
\[
\sum_{i=0}^N \psi_i=\sum_{F \preceq Q} (-1)^{\codim F}(\dim F+1)!\vol(F)=n(n+1)!\vol(P)-n!\vol(\p P),
\]
which was already proved in Lemma \ref{lem:degHu}.
Similarly, a straightforward computation shows that
\begin{align*}
\int_Q \widetilde{\bs x}\, dv_Q&=\int_P \bs x\, dv_P\cdot \vol(\D_{n-1})={{n}\choose{n}}\cdot \frac{1}{(n-1)!}\bs b_P, \\
\int_{\p Q} \widetilde{\bs x}\, d\sigma_Q&={{n}\choose{n-1}}\cdot \frac{1}{(n-2)!}\bs b_P+{{n}\choose{n}}\cdot \frac{1}{(n-1)!}\bs b_{\p P}, \\
\int_{\p^2 Q} \widetilde{\bs x}\, d\sigma^{(2)}_Q&={{n}\choose{n-2}}\cdot \frac{1}{(n-3)!}\bs b_P+{{n}\choose{n-1}}\cdot \frac{1}{(n-2)!}\bs b_{\p P}+{{n}\choose{n}}\cdot \frac{1}{(n-1)!}\bs b_{\p^2 P}, \\
\vdots & \\
\sum_{\widetilde{\bs a}\in \mathcal V(Q)}\widetilde{\bs a}&=\#\mathcal V(\D_{n-1})\sum_{\bs a \in \mathcal V(P)}\bs a=n\sum_{\bs a \in \mathcal V(P)} \bs a.
\end{align*}
Plugging these values into $\eqref{eq:alternateQ}$, we find that
\begin{align*}
\mathrm{Coeff}(\bs b_P)&=(n+1)!\sum_{k=1}^n(-1)^{n-k}{{n}\choose{k}}{{n+k}\choose{k-1}}=n(n+1)!  ,\\
\mathrm{Coeff}(\bs b_{\p P})&=-n!\sum_{k=1}^n(-1)^{n-k}{{n}\choose{k}}{{n+k-1}\choose{k-1}}=-n!  , ~~\text{and} \\
\mathrm{Coeff}(\bs b_{\p^j P})&=0, \quad \text{for} \quad 2\leqslant j \leqslant n,
\end{align*}
with the aid of Claim \ref{claim:binomial}. This completes the proof of Proposition \ref{prop:AffHurwitz}. 
\end{proof}
The following example illustrates how the affine hull of the discriminant 
polytope can be determined by alternative summands of barycenters for each codimension $j$ faces of $P$.
\begin{example}\label{ex:A-discrim}\rm
Let $A$ be the set of lattice points in $\Z^2$ given by
\[
\Set{\begin{pmatrix}
0 \\ 0
\end{pmatrix},
\begin{pmatrix}
0 \\ 1
\end{pmatrix},
\begin{pmatrix}
0 \\ 2
\end{pmatrix},
\begin{pmatrix}
1 \\ 0
\end{pmatrix},
\begin{pmatrix}
1 \\ 1
\end{pmatrix},
\begin{pmatrix}
2 \\ 0
\end{pmatrix}
}=\Set{\bs{a}_0, \dots ,\bs{a}_5}.
\]
Let $P$ be the convex hull of $A$ in $\R^2$ which is the standard triangle with vertices $\bs a_0$, $\bs a_2$ and $\bs a_5$. 
It is known that the discriminant $\D_{X_P}$ is then given by (see, \cite[p. $437$, $(1.13)$]{GKZ94})
\begin{equation}\label{eq:QuadForm}
\D_{X_P}=a_{00}a_{11}^2+a_{01}^2a_{20}+a_{02}a_{10}^2-a_{01}a_{10}a_{11}-4a_{00}a_{02}a_{20}.
\end{equation}
Hence, we see that the discriminant polytope in $\R^6$ is
\begin{equation}\label{eq:QuadDiscrim}
\Wt_{(\C^\times)^6}(\D_{X_P})=\conv\Set{
\begin{pmatrix}
1 \\ 0 \\ 0 \\ 0 \\ 2 \\ 0
\end{pmatrix},
\begin{pmatrix}
0 \\ 2 \\ 0 \\ 0 \\ 0 \\ 1
\end{pmatrix},
\begin{pmatrix}
0 \\ 0 \\ 1 \\ 2 \\ 0 \\ 0
\end{pmatrix},
\begin{pmatrix}
0 \\ 1 \\ 0 \\ 1 \\ 1 \\ 0
\end{pmatrix},
\begin{pmatrix}
1 \\ 0 \\ 1 \\ 0 \\ 0 \\ 1
\end{pmatrix}
}.
\end{equation}
We remark that these five vertices of $\Wt_{(\C^\times)^6}(\D_{X_P})$ are corresponding to five different $D$-equivalence classes of $14$ regular triangulations of $(P,A)$.
See, \cite[Example $2.3$]{KL20} for the associated massive GKZ vectors as well as their ordinary GKZ vectors. Then, $\eqref{eq:degDiscrim}$ gives that
\[
\deg(\D_{X_P})=(2+1)!\cdot \vol(P)-2!\cdot 3\cdot 2+3=3,
\]
which agrees with the result of $\eqref{eq:QuadForm}$. Moreover, we see that
\[
\bs b_P=\int_P \bs x\,dv=\left(\frac{4}{3}, \frac{4}{3} \right), \quad \bs b_{\p P}=\int_{\p P} \bs x\,d\sigma=\left( 4, 4 \right), \quad \text{and}\quad 
\sum_{\bs a \in \mathcal V(P)} \bs a=(2,2)
\]
by direct computations. Thus, the right-hand side of $\eqref{eq:AffDiscrim2}$ becomes
\[
3!\left(\frac{4}{3}, \frac{4}{3} \right)-2!\left(4,4 \right)+(2,2)=(2,2),
\]
whereas the left-hand side of $\eqref{eq:AffDiscrim2}$ is
\[
\sum_{i=0}^5\eta_i \bs a_i=
\begin{pmatrix}
\eta_3+\eta_4+2\eta_5 \\
\eta_1+2\eta_2+\eta_4
\end{pmatrix}.
\]
This shows that the affine hull of $\eqref{eq:QuadDiscrim}$ is determined by
\[
\sum_{i=0}^5\eta_i \bs a_i=\sum_{j=0}^2\sum_{F\in \mathcal{F}^j(P)}(-1)^{\codim F}(\dim F+1)!\int_F\bs x\, d\sigma_F.
\]
\end{example}
We finish this section with the following proposition.
\begin{proposition}\label{prop:MaxHurwitz}
We have the following equality on $\Hu(P)$:
\[
\max\Set{\bracket{\vp, \psi}|\psi \in \Hu(P)}=n(n+1)!\int_Pg_\vp(\bs x)dv-n!\int_{\p P}g_\vp(\bs x)d\sigma.
\]
\end{proposition}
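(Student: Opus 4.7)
The plan is to reduce the maximization to triangulations of $(P, A)$ via Proposition \ref{prop:HuPoly}, recognize the regular triangulation induced by $\vp$ as the maximizer, and identify the resulting value through the product polytope $Q := P \times \D_{n-1}$.

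First, since $\Hu(P) = \conv\set{\xi_T \mid T \text{ triangulation of } (P, A)}$, the support function of $\Hu(P)$ in direction $\vp$ equals $\max_T \bracket{\vp, \xi_T}$. Using $\xi_T = n\eta_{T, n} - \eta_{T, n-1}$ and specializing equation $\eqref{eq:alternate2}$ to $j = 0, 1$, we obtain
\[
\bracket{\vp, \xi_T} = n(n+1)! \int_P g_{\vp, T}(\bs x)\, dv - n! \int_{\p P} g_{\vp, T}(\bs x)\, d\sigma.
\]
For the regular triangulation $T_\vp$ of $(P, A)$ induced by the height function $\vp$ via the upper boundary of $G_\vp$ in $\eqref{def:graph}$, we have $g_{\vp, T_\vp} = g_\vp$ on $P$ by construction, so $\bracket{\vp, \xi_{T_\vp}}$ already equals the claimed right-hand side, yielding the lower bound $\max \geqslant \text{RHS}$.

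For the reverse inequality $\bracket{\vp, \xi_T} \leqslant \bracket{\vp, \xi_{T_\vp}}$ for all $T$, I would work with the identification $\Hu(P) = \D(Q)$ for the $(2n-1)$-dimensional product $Q$. The height function $\vp$ extends to $\vp' : Q \cap \Z^{2n-1} \to \R$ by $\vp'(\bs a, \bs b) := \vp(\bs a)$, and because $\vp'$ is constant in the $\D_{n-1}$-direction, the upper envelope on $Q$ satisfies $g_{\vp'}(\bs x, \bs y) = g_\vp(\bs x)$. Combining the analog of $\eqref{eq:MinSum}$ for $Q$, namely $\sec(Q) = \D(Q) + D(Q)$, with the classical GKZ fact that $\bracket{\vp', \cdot}$ attains its maximum on $\sec(Q)$ at the GKZ vertex $\Phi_{A_Q}(T_{\vp'})$ of the regular triangulation $T_{\vp'}$ of $(Q, A_Q)$ induced by $\vp'$, the uniqueness of Minkowski vertex decompositions forces $\bracket{\vp', \cdot}$ to attain its maximum on $\D(Q)$ at the massive GKZ vector $\eta_{T_{\vp'}}$. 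The prism triangulation of $Q$ built over $T_\vp$ realizes this $D$-equivalence class (this is the content of Sano's identification $\Hu(P) = \D(Q)$ implicit in Proposition \ref{prop:HuPoly}), so $\eta_{T_{\vp'}}$ pulls back to $\xi_{T_\vp}$, completing the proof.

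As a direct verification that the two quantities match, substituting $g_{\vp', T_{\vp'}} = g_\vp(\bs x)$ into Lemma \ref{lem:alternate} applied to $Q$, expanding $\p^j Q = \bigcup_{i+k=j} \p^i P \times \p^k \D_{n-1}$, and invoking Claim \ref{claim:binomial} exactly as in the proofs of Lemma \ref{lem:degHu} and Proposition \ref{prop:AffHurwitz}, the coefficients collapse to $n(n+1)!$ on $\int_P g_\vp\, dv$, $-n!$ on $\int_{\p P} g_\vp\, d\sigma$, and $0$ on the higher-codimension strata, matching the stated formula. The main obstacle is justifying that the maximum of $\bracket{\vp', \cdot}$ on the discriminant summand $\D(Q)$ is attained at $\eta_{T_{\vp'}}$: while the analog for $\sec(Q)$ is classical, the transfer to $\D(Q)$ requires that $\Phi_{A_Q}(T_{\vp'}) = \eta_{T_{\vp'}} + d_{T_{\vp'}}$ be the Minkowski vertex decomposition at the maximizer, a point which should follow from the fiber polytope construction of $D(Q)$ in \cite[Ch.~7]{GKZ94}.
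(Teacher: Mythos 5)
Your proposal is correct in substance and shares the paper's skeleton — reduce via Proposition \ref{prop:HuPoly} to $\max_T\bracket{\vp,\xi_T}$, convert each $\bracket{\vp,\xi_T}$ to $n(n+1)!\int_Pg_{\vp,T}\,dv-n!\int_{\p P}g_{\vp,T}\,d\sigma$ using the $j=0,1$ cases of the massive-GKZ pairing, and exhibit attainment through the triangulation induced by the roof of $G_\vp$ — but it diverges meaningfully on the upper bound. The paper simply asserts the inequality $\braket{\vp,\psi}\leqslant n(n+1)!\int_Pg_\vp\,dv-n!\int_{\p P}g_\vp\,d\sigma$ for all $\psi\in\Hu(P)$, citing Proposition \ref{prop:AffHurwitz} and the proof of Lemma \ref{lem:alternate}; note that a naive term-by-term estimate fails here, since $g_{\vp,T}\leqslant g_\vp$ gives the \emph{wrong} direction on the subtracted boundary integral, so some genuine argument is needed. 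You supply one: pass to $Q=P\times\D_{n-1}$, extend $\vp$ fiberwise to $\vp'$, use that the support function of $\sec(Q)$ in direction $\vp'$ is attained at the GKZ vertex of the induced regular triangulation, and transfer this to the summand $\D(Q)$ via the Minkowski decomposition $\sec(Q)=\D(Q)+D(Q)$. The transfer is sound: if the maximizing face of a Minkowski sum is a vertex $v=\eta+d$ with $\eta\in\D(Q)$, $d\in D(Q)$, then $\bracket{\vp',\eta}$ and $\bracket{\vp',d}$ must each be maximal on their summands, so the vertex-level decomposition $\Phi_{A_Q}(T)=\eta_T+d_T$ from \cite[Ch.~11, Remark 3.4(b)]{GKZ94} does force $\eta_{T_{\vp'}}$ to be the maximizer on $\D(Q)$; what remains to quote is Sano's identification of $\Hu(P)$ with the fiberwise projection of $\D(Q)$, which is exactly what underlies Proposition \ref{prop:HuPoly}. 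Your approach buys an explicit justification of the step the paper leaves implicit, at the cost of invoking more GKZ machinery. One small gap on the attainment side: for non-generic $\vp$ the roof of $G_\vp$ induces only a regular \emph{subdivision}, not a triangulation, so $T_\vp$ as you use it need not exist; the paper repairs this by perturbing to a generic $\vp'$ near $\vp$ and checking that the resulting triangulation $T$ refines the subdivision and still satisfies $g_{\vp,T}=g_\vp$. The same caveat applies to your $T_{\vp'}$ on $Q$, where $\vp'$ is never generic (it is constant along fibers), so the prisms $\sigma\times\D_{n-1}$ must be further triangulated before the massive GKZ vector is read off.
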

\begin{proof}
For any triangulation $T$ of $(P, A)$, let $\xi_T=n\eta_{T,n}-\eta_{T,n-1}$ be the Hurwitz vector defined in $\eqref{def:HurVec}$. Then, we have
\begin{align*}
\max\set{\braket{\vp, \psi}| \psi\in \Hu(P)}&=\max_{T\preceq  (P,A)}\set{\braket{\vp, \xi_T}| \xi_T\in \Hu(P)}\\
&=\max_{T\preceq  (P,A)}\Set{n(n+1)!\int_Pg_{\vp, T}(\bs x)dv-n!\int_{\p P}g_{\vp, T}(\bs x)d\sigma}
\end{align*}
by Lemma \ref{lem:alternate}. Moreover, for any $\psi\in \Hu(P)$, we observe that
\begin{equation}\label{ineq:Hurwitz}
\braket{\vp,\psi}\leqslant n(n+1)!\int_P g_\vp(\bs x)dv-n!\int_{\p P}g_\vp(\bs x)d\sigma
\end{equation}
from Proposition \ref{prop:AffHurwitz} and the argument given in the proof of Lemma \ref{lem:alternate}.

In order to show the equality in $\eqref{ineq:Hurwitz}$, it suffices to provide a triangulation $T$ such that $g_\vp=g_{\vp,T}$.
For this, we firstly consider the projection of the roof of $G_\vp$ into $P$, where $G_\vp$ is convex polyhedron defined in $\eqref{def:graph}$.
Then the projection $p_\vp:G_\vp \to P$ gives a subdivision $\mathcal S=\Set{S_1, \dots , S_m}$ of $(P,A)$.
Note that for each $1\leqslant j \leqslant m$, $P_j=\conv(S_j)$ determines a polytope with vertices in $A$ but $P_j$ may not be a simplex.

Now we take a generic $\vp'$ close to $\vp$. Let $p_{\vp'}:G_{\vp'}\to P$ be the projection from the roof of $G_{\vp'}$ into $P$.
Since $\vp'$ is generic, the projection $p_{\vp'}$ gives a triangulation $T$ of $(P,A)$.
Moreover, $p_{\vp'}$ induces a triangulation of each polytope $P_j$ because $\vp'$ is close enough to $\vp$.
Consequently, we see that $g_\vp$ is a $T$-piecewise linear function, and hence $g_\vp=g_{\vp,T}$.
This completes the proof of Proposition \ref{prop:MaxHurwitz}. 
\end{proof}

\section{Proof of the main theorem}\label{sec:Proof}
In this section, we prove Theorem $\ref{thm:main}$.

\begin{proof}[Proof of Theorem $\ref{thm:main}$]
By Proposition $\ref{prop:HMPcriterion}$, we have
\begin{equation}\label{ineq:Pss}
\deg(\D_{X_P}(i))\max \Set{\bracket{\vp, \psi}| \psi \in \mathrm{Ch}(iP)}\leqslant \deg(R_{X_P}(i))\max \Set{\bracket{\vp, \psi}| \psi \in \Hu(iP)}
\end{equation}
for any $\vp\in W(iP)$. Then, Lemma $1.9$ in \cite[p. $221$]{GKZ94} gives the equality 
\[
\max \Set{\bracket{\vp, \psi}| \psi \in \mathrm{Ch}(iP)}=i^n(n+1)!\int_P g_\vp(\bs x)dv
\]
for any $\vp\in W(iP)$. Hence, the left-hand side of $\eqref{ineq:Pss}$ is equal to
\begin{align*}
\Set{i^nn(n+1)!\, \vol(P)-i^{n-1}n!\, \vol\bigl(\p P \bigr)}\cdot\left(i^n(n+1)!\int_{P}g_\vp(\bs x)dv \right).
\end{align*}
On the other hand, Proposition \ref{prop:MaxHurwitz} yields that the right-hand side of $\eqref{ineq:Pss}$ is 
\[
(n+1)!\, i^n\vol(P)\Set{i^nn(n+1)!\int_Pg_\vp(\bs x)dv-i^{n-1}n!\int_{\p P}g_\vp(\bs x)d\sigma}.
\]
By simplifying the coefficient of $\int_P g_\vp(\bs x)dv$ in $\eqref{ineq:Pss}$, we see that
\[
\mathrm{Coeff}\left(\int_P g_\vp(\bs x)dv \right)=i^{2n-1}(n+1)!\cdot n!\, \vol(\p P).
\]
Similarly, we have  
\[
\mathrm{Coeff}\left(\int_{\p P} g_\vp(\bs x)d\sigma \right)=-i^{2n-1}(n+1)!\cdot n!\, \vol(P).
\]
Consequently we find that
\begin{align*}
\eqref{ineq:Pss} \quad \Leftrightarrow \quad i^{2n-1}(n+1)!\, n!\left(\vol(\p P)\int_Pg_\vp(\bs x)dv-\vol( P)\int_{\p P}g_\vp(\bs x)d\sigma \right)\geqslant 0
\end{align*}
for any $\vp\in W(iP)$. Combining this and Kempf's instability theorem (Proposition \ref{prop:Kempf}), we complete
the proof of Theorem \ref{thm:main}. See also, $\eqref{eq:PLinv}$ and Corollary \ref{cor:main}.
\end{proof}
For an $n$-dimensional smooth projective toric variety $X$, Let $\mathrm{Aut}(X)$ be the automorphism group of $X$ and $T_\C\cong (\C^\times)^n$ be the algebraic torus which defines the toric structure of $X$.
Then, the Demazure's structure theorem \cite[p. 140]{Od88} states that $T_\C$ is the maximal algebraic torus in $\mathrm{Aut}(X)$. 
From the definition of a toric variety, the algebraic torus $T_\C$ acting on $X$ has an open dense orbit $U\subset X$.
Then, the normalizer $\mathcal N(T_\C)\subset \mathrm{Aut}(X)$ of $T_\C$ has a natural action on $U$.
Setting $\mathcal W(X)=\mathcal N(T_\C)/T_\C$, we call $\mathcal W(X)$ the {\emph{Weyl group}}.

By choosing an arbitrary point $\bs x_0\in U$, we identify $U$ with $T_\C$.
This identification gives a splitting of the short exact sequence
\begin{equation*}
\xymatrix@R=-1ex{
1~~\ar[r]&~~T_\C ~~\ar[r]&~~\mathcal N(T_\C)~~\ar[r]&~~\mathcal W(X)~~\ar[r]&~~1.
 }
\end{equation*}
Thus, we obtain an embedding $\mathcal W(X) \hookrightarrow \mathcal N(T_\C) \subset \mathrm{Aut}(X)$.
Let us denote by $\mathcal K(T_\C)\cong (S^1)^n$ the maximal compact subgroup in $T_\C$.
Then, there is the canonical isomorphism $U/\mathcal K(T_\C)\cong N_\R$, where $N_\R$ is the dual space of $M_\R$.
In particular, the associated fan $\Sigma$ of $X$ is in $N_\R$. Then, the action of $\mathcal N(T_\C)$ on $U$ induces the linear action of $\mathcal W(X)$ on $N_\R$.

Let $(X_P, L_P)$ be a polarized smooth toric variety with the associated moment polytope $P$. 
For any positive integer $i$, let $\PL(P;i)$ be the set of piecewise linear functions defined in $\eqref{eq:PLfuns}$.
There is the natural action of $\mathcal W(X_P)$ on $\PL(P;i)$ and we denote $\PL(P;i)^{\mathcal W(X_P)}$
the set of the Weyl group invariant piecewise linear functions in $\PL(P;i)$:
\begin{equation}\label{eq:PLinv}
\PL(P;i)^{\mathcal W(X_P)}=\Set{g\in \PL(P;i)|g(w\cdot \bs x)=g(\bs x)  \text{ for any }   w\in \mathcal W(X_P) }.
\end{equation}
Combining with Theorem $\ref{thm:main}$ and 
Proposition $\ref{prop:Kempf}$, we obtain the following corollary.
\begin{corollary}\label{cor:main}
Let $P$ be a Delzant lattice polytope in $M_\R$. Then for any integer $i>0$, $(X_P,L_P^{\otimes i})$ is numerically semistable if and only if $\eqref{ineq:FPinv}$ holds for any $g\in \PL(P;i)^{\mathcal W(X_P)}$.
\end{corollary}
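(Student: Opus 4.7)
The plan is to reduce the claim to a single explicit inequality through the Hilbert--Mumford--Paul criterion, and then perform a Kempf-type reduction to Weyl group invariants. First, I would apply Proposition \ref{prop:HMPcriterion}: numerical semistability of the pair with respect to the diagonal subtorus $H(iP) \leq \mathrm{SL}(N_i, \C)$ is equivalent to $F_P(\varphi) \leq 0$ for every test function $\varphi \in W(iP)$, where $F_P(\varphi)$ is the Futaki--Paul invariant in \eqref{def:FPinv}. It therefore suffices to expand $F_P(\varphi)$ combinatorially and recognize its sign as the inequality \eqref{ineq:FPinv}.

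Next, I would compute the two linear support maxima appearing in $F_P(\varphi)$. For the secondary polytope of the dilated set $iP$, the standard formula from \cite[Chapter 7, Lemma 1.9]{GKZ94} gives
\[
\max\{\bracket{\varphi, \psi} \mid \psi \in \mathrm{Ch}(iP)\} = i^n (n+1)! \int_P g_\varphi(\bs x)\, dv,
\]
while Proposition \ref{prop:MaxHurwitz} applied to $iP$ yields
\[
\max\{\bracket{\varphi, \psi} \mid \psi \in \Hu(iP)\} = i^n n(n+1)! \int_P g_\varphi(\bs x)\, dv - i^{n-1} n! \int_{\p P} g_\varphi(\bs x)\, d\sigma.
\]
The corresponding degrees are $\deg R_{X_P}(i) = i^n (n+1)!\, \vol(P)$ and, by Lemma \ref{lem:degHu} applied to $iP$, $\deg \Hu_{X_P}(i) = i^n n(n+1)!\, \vol(P) - i^{n-1} n!\, \vol(\p P)$. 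Plugging these four quantities into $F_P(\varphi) \leq 0$ and expanding, the leading terms proportional to $\vol(P) \cdot \int_P g_\varphi\, dv$ cancel. Dividing the surviving expression by the positive constant $i^{2n-1} (n+1)!\, n!$ leaves
\[
\vol(\p P) \int_P g_\varphi(\bs x)\, dv - \vol(P) \int_{\p P} g_\varphi(\bs x)\, d\sigma \geq 0,
\]
which is precisely the inequality \eqref{ineq:FPinv}.

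Finally, to upgrade from $H(iP)$-semistability to the required $\mathrm{SL}(N_i, \C)$-semistability and to restrict attention to $\mathcal W(X_P)$-invariant test functions, I would invoke Kempf's instability theorem (Proposition \ref{prop:Kempf}). A careful choice of a reductive subgroup $G_0$ inside the stabilizer should produce a centralizer $C(G_0)$ generated by $H(iP)$ together with a lift of the Weyl group $\mathcal W(X_P)$; then $C(G_0)$-semistability is equivalent to testing \eqref{ineq:FPinv} only on the invariant functions $g_\varphi \in \PL(P;i)^{\mathcal W(X_P)}$.

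The step I expect to be most delicate is this final Weyl-group reduction: pinpointing the reductive subgroup $G_0$ whose centralizer combines the maximal torus with the $\mathcal W(X_P)$-action, and verifying that Kempf's theorem applies in the appropriate form to cut the test set down to $\PL(P;i)^{\mathcal W(X_P)}$. The substitution and cancellation step is in principle routine, but one must also check that the two support maxima are attained compatibly, that is, by a common triangulation; this compatibility is implicit in the generic-perturbation argument used in the proof of Proposition \ref{prop:MaxHurwitz}, and I would invoke it again here.
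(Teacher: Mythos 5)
Your proposal follows essentially the same route as the paper's own proof: reduction via Proposition \ref{prop:HMPcriterion}, substitution of the degrees from Lemma \ref{lem:degHu} and the two support maxima (GKZ's Lemma $1.9$ for $\mathrm{Ch}(iP)$ and Proposition \ref{prop:MaxHurwitz} for $\Hu(iP)$), cancellation leaving $i^{2n-1}(n+1)!\,n!\,F_P(i;g_\varphi)\geqslant 0$, and finally Kempf's instability theorem (Proposition \ref{prop:Kempf}) to pass to $\mathcal W(X_P)$-invariant test functions. The final Weyl-group reduction you flag as delicate is treated even more tersely in the paper itself, so your concern is reasonable but reflects no divergence of method.
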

\begin{remark}\rm
It is remarkable that we only need to consider a concave $T$-piecewise linear function $g_\vp$ corresponding to a {\emph{regular triangulation}} of $P$ in Corollary $\ref{cor:main}$.
In other words, in order to verify numerical semistability of a polarized toric variety, it is enough to consider all $T$-piecewise linear functions which correspond to the vertices of the secondary polytope $\sec(P)$.
(see, p. $221$, Theorem $1.7$ (b) in \cite{GKZ94}).
\end{remark}
In particular, if we take $g_\vp$ to be linear functions in $\eqref{ineq:FPinv}$, we have the following necessary condition for numerical semistability.
\begin{corollary}
Let $P$ be a Delzant lattice polytope in $M_\R$, and let $(X_P, L_P)$ be the associated smooth polarized toric variety. We assume that $(X_P, L_P^{\otimes i})$ is numerically semistable for some integer $i>0$.
Then the following equality holds:
\[
\vol(\p P)\int_P \bs x\, dv-\vol( P)\int_{\p P} \bs x \,d\sigma =\bs 0.
\]
\end{corollary}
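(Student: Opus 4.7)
The plan is to specialise Corollary \ref{cor:main} to linear functions. Given any vector $\bs v\in N_\R$ such that $g(\bs x):=\bracket{\bs v,\bs x}$ is Weyl-invariant (i.e.\ $\bs v$ is fixed by the dual $\mathcal W(X_P)$-action), the function $g$ is globally linear and hence concave. Setting the height function $\vp:=\restrict{g}{P\cap(\Z/i)^n}$ one verifies $g_\vp=g$ on $P$, so $g\in \PL(P;i)$; since $-g$ is equally linear and Weyl-invariant, both $g$ and $-g$ belong to $\PL(P;i)^{\mathcal W(X_P)}$.

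By the numerical semistability hypothesis, Corollary \ref{cor:main} applied to $g$ and $-g$ yields $F_P(i;g)\geqslant 0$ and $F_P(i;-g)\geqslant 0$. Since $F_P(i;\cdot)$ is linear in its argument (directly from $\eqref{ineq:FPinv}$), this forces $F_P(i;g)=0$. A direct computation using the linearity of $g$ gives
\[
F_P(i;g)=\bracket{\bs v,\, \vol(\p P)\int_P\bs x\,dv-\vol(P)\int_{\p P}\bs x\,d\sigma}=\bracket{\bs v,\bs w},
\]
where $\bs w$ denotes the vector whose vanishing is claimed. Hence $\bracket{\bs v,\bs w}=0$ for every Weyl-invariant $\bs v$.

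To upgrade this to all $\bs v$, observe that $\bs w$ is itself Weyl-invariant: the action of $\mathcal W(X_P)$ on $M_\R$ preserves $P$ (and hence the measures $dv$ on $P$ and $d\sigma$ on $\p P$), so the centroids $\int_P \bs x\, dv$ and $\int_{\p P}\bs x\, d\sigma$ are Weyl-fixed. For arbitrary $\bs v\in N_\R$, let $\bs{\bar v}:=\frac{1}{|\mathcal W(X_P)|}\sum_w w^*\bs v$ be its Weyl-average. Then $\bs{\bar v}$ is Weyl-invariant, and by the Weyl-invariance of $\bs w$ one has $\bracket{\bs{\bar v},\bs w}=\bracket{\bs v,\bs w}$. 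The previous paragraph forces $\bracket{\bs{\bar v},\bs w}=0$, and therefore $\bracket{\bs v,\bs w}=0$ for every $\bs v\in N_\R$, proving $\bs w=\bs 0$. The only non-mechanical step is the invariance statement for $\bs w$; this is not a real obstacle, since the Weyl action on the toric variety $X_P$ is chosen so that the normaliser acts linearly on $M_\R$ preserving the polytope $P$ (Demazure structure theorem together with the setup described just before Corollary \ref{cor:main}).
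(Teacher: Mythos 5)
Your proof is correct and follows the same route the paper intends: the paper offers no argument beyond the remark ``take $g_\vp$ to be linear functions in $\eqref{ineq:FPinv}$,'' and your specialization to $\pm g$ for $g$ linear, forcing $F_P(i;g)=0$ by linearity of $F_P(i;\cdot)$, is exactly that. Your extra averaging step to handle the restriction to $\PL(P;i)^{\mathcal W(X_P)}$ is a legitimate (and careful) addition that the paper glosses over, though it could also be bypassed by noting that numerical semistability already yields $F_P(i;g_\vp)\geqslant 0$ for \emph{all} $g_\vp\in \PL(P;i)$ via Proposition \ref{prop:HMPcriterion} applied to the standard torus.
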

\begin{example}[Ono's theorem of Chow semistability for toric varieties]\label{ex:Ono}
\rm
If we consider a special case of numerical semistability, we reproof Ono's theorem of Chow semistability for polarized toric varieties as follows.

For a lattice Delazant polytope $P$, let $R_{X_P}(i)$ denote the Chow form of $(X_P, L_P^{\otimes i})$.
Let us consider the pair of trivial representations $\left( \bs 1^{\deg R_{X_P}(i)}, R_{X_P}(i)^{N_i}\right)$, where $N_i$ is the number of lattice points in $iP$.
Then, we define the function $d_{iP}\in W(iP)$ by $d_{iP}(\bs a)=1$ for all $\bs a\in P\cap(\Z/i)^n$. Let $H(iP)$ be the standard torus of $(\C^\times)^{N_i}$ defined by
\[
H(iP)=\Set{(t_1, \dots ,t_{N_i})\in (\C^\times)^{N_i} | \prod_{j=1}^{N_i}t_j=1}.
\]
Applying Thereom $\ref{thm:main}$ into $\left( \bs 1^{\deg R_{X_P}(i)}, R_{X_P}(i)^{N_i}\right)$, we see that the Chow form of $(X_P, L_P^{\otimes i})$ is $H(iP)$-semistable if and only if
\begin{align}
&\left( \bs 1^{\deg R_{X_P}(i)}, R_{X_P}(i)^{N_i}\right) \text{ is numerically semistable with respect to $H(iP)$-action}  \notag \\
\Leftrightarrow \quad & \deg \left(R_{X_P}(i) \right) \max\Set{\braket{\vp,\psi}| \psi=d_{iP}}\leqslant N_i\max\Set{\braket{\vp,\psi}| \psi\in \Ch(iP)} \text{ for any } \vp\in W(iP) \notag \\
\Leftrightarrow \quad & \frac{(n+1)!\vol(iP)}{N_i}\bracket{\vp, d_{iP}}\leqslant \max\Set{\braket{\vp,\psi}| \psi\in \Ch(iP)} \text{ for any } \vp\in W(iP). \label{ineq:Chowss}
\end{align}
By Lemma $1.9$ in \cite[Chapter $7$]{GKZ94} and Lemma $3.3$ in \cite{ZZ08}, we find that
\[
 \max\Set{\braket{\vp,\psi}| \psi\in \Ch(iP)} =(n+1)!\int_{iP}g_\vp(\bs x)dv=i^n(n+1)!\int_P g_\vp(\bs x)dv.
\]
Since
\[
\bracket{\vp, d_{iP}}=\sum_{\bs a\in P\cap (\Z/i)^n}\vp(\bs a)d_{iP}(\bs a)=\sum_{\bs a\in P\cap (\Z/i)^n}g_{\vp}(\bs a),
\]
we conclude that
\begin{align*}
\eqref{ineq:Chowss} \quad & \Leftrightarrow \quad \int_P g_\vp(\bs x)dv \geqslant \frac{\vol (P)}{N_i} \sum_{\bs a\in P\cap (\Z/i)^n}g_{\vp}(\bs a) 
\end{align*}
for any $g_\vp \in \PL(P;i)^{\mathcal W(X_P)}$, which is a necessary and sufficient condition for Chow semistability of polarized toric varieties firstly proved by Ono \cite{Ono13} (see, also Theorem $2.6$ in \cite{LLSW19}).
\end{example}

\section{The relationship among other notions of GIT-stability}\label{sec:Relationship}
When a reductive algebraic group $G$ acts on an algebraic variety $X$ linearly, we have several steps for constructing the quotient according to the classical GIT.
At first we choose a $G$-equivariant embedding of $X$ into a certain projective space with the lifted action $\widetilde G$ induced by $G\curvearrowright X$.
This corresponds to the choice of a linearization of the action, and it is defined by a $G$-linearized ample line bundle on $X$.
Due to different choices of linearization of an ample line bundle, there are many notions of GIT-stability for algebraic varieties. Among various notions of GIT-stability,
K-stability and Chow stability are most widely studied in K\"ahler geometry. In this section, we clarify the relationship among P-stability, Chow stability and K-stability for a polarized toric variety.

\subsection{Asymptotic numerical semistability and K-semistability}\label{sec:PandK}
Firstly, we prove that asymptotic numerical semistability is equivalent to K-semistability in the sense of Donaldson \cite{Don02} for a polarized toric manifold.
\begin{theorem}\label{thm:PandK}
Let $(X_P, L_P)$ be a smooth polarized toric variety with the associated moment polytope $P$.
Then $(X_P,L_P)$ is asymptotically numerically semistable if and only if it is K-semistable for toric degenerations.
\end{theorem}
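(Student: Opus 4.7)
The plan is to chain together Corollary~\ref{cor:main} with a density/averaging argument, then perform a sign flip to recover Donaldson's characterization of K-semistability for toric degenerations, namely
\[
\mathcal L(f):=\int_{\p P}f\,d\sigma-\frac{\vol(\p P)}{\vol(P)}\int_P f\,dv\geqslant 0
\]
for every convex piecewise linear function $f$ on $P$ with rational breakpoints (cf.\ \cite{Don02}).

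First I would apply Corollary~\ref{cor:main}: asymptotic numerical semistability of $(X_P,L_P)$ is equivalent to the existence of some $i_0\in\N$ such that $F_P(g)\geqslant 0$ for all $g\in\PL(P;i)^{\mathcal W(X_P)}$ and every $i\geqslant i_0$. Here $F_P$ does not depend on $i$; only the space of test functions does. Given any concave piecewise linear function $g$ on $P$ with rational breakpoints, one may choose $i$ to be a common denominator of these breakpoints, so that $g|_{P\cap(\Z/i)^n}=:\varphi$ determines $g=g_\varphi$ by the concavity of $g$ (both functions agree on the lattice and are affine on a common refinement of their linearity cells). Hence asymptotic numerical semistability amounts to $F_P(g)\geqslant 0$ on every $\mathcal W(X_P)$-invariant rational concave PL function $g$.

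Next I would drop the Weyl-invariance by averaging. Since $\mathcal W(X_P)$ acts on $M_\R$ by $\Z$-linear isomorphisms preserving $P$, $\p P$ and the natural volume and surface measures, one has $F_P(g\circ w)=F_P(g)$ for every $w\in\mathcal W(X_P)$. For any rational concave PL function $g$, its Weyl average $\widetilde g:=|\mathcal W(X_P)|^{-1}\sum_{w}g\circ w$ lies in $\PL(P;i)^{\mathcal W(X_P)}$ for appropriate $i$, is still concave, and satisfies $F_P(\widetilde g)=F_P(g)$. Thus the Weyl-invariance restriction is cosmetic, and asymptotic numerical semistability is equivalent to $F_P(g)\geqslant 0$ for every rational concave PL function $g$ on $P$. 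A sign flip $g=-f$ converts this into $\mathcal L(f)\geqslant 0$ for every rational convex PL function $f$ on $P$, which is Donaldson's criterion for K-semistability under toric degenerations.

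The main obstacle I expect is verifying the first step carefully, namely the identification of $\PL(P;i)$ with the full class of concave PL functions whose breakpoints lie in $(\Z/i)^n\cap P$. One must confirm that $g_\varphi$ recovers $g$, rather than only producing some ``coarsest'' concave envelope, whenever $g$ itself is concave with lattice-aligned breakpoints. A subsidiary issue is the role of affine functions: they lie in the kernel of $F_P$ precisely when the necessary condition $\vol(\p P)\int_P\bs x\,dv=\vol(P)\int_{\p P}\bs x\,d\sigma$ holds, which is itself implied by asymptotic numerical semistability, so the equivalence is consistent with Donaldson's convention of quotienting by affine functions.
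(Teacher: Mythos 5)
Your proposal is correct, and it reaches the conclusion by a genuinely different route from the paper. The paper gets the easy implication (K-semistability $\Rightarrow$ asymptotic numerical semistability) exactly as you do, but proves the converse by contradiction: it posits a destabilizing concave function $g_{\varphi,T'}$ subordinate to a \emph{non-regular} triangulation $T'$, compares it with the envelope $g_{\varphi,T}$ for a regular $T$ via the pointwise inequality $g_{\varphi,T}\leqslant g_{\varphi,T'}$, and then runs a case analysis (whether or not $T$ refines $T'$) using barycenter decompositions $\int_{C'}g\,dv=\vol(C')g(\bs b_{C'})$ to derive a contradiction. You instead observe that the class $\PL(P;i)$ is \emph{already} the full class of concave PL functions whose linearity cells have vertices in $P\cap(\Z/i)^n$ --- because a concave function is its own upper envelope $g_\varphi$ --- so that $\bigcup_i\PL(P;i)$ exhausts all rational concave PL functions; this dissolves the regular/non-regular dichotomy that the paper's case analysis is built around, since the maximal linearity domains of a concave PL function always form a regular subdivision. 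Your treatment of the Weyl-invariance constraint by averaging (using linearity of $F_P$ and its invariance under the lattice-preserving affine action of $\mathcal W(X_P)$, which preserves $dv$ and the lattice-normalized $d\sigma$) is a step the paper leaves entirely implicit, and it is needed if K-semistability is read as positivity on \emph{all} rational convex PL functions; so your argument is arguably more complete on this point. Two trivial housekeeping items: the common denominator $i$ you choose must also be taken $\geqslant i_0$ (replace it by a suitable multiple, which does not change the envelope), and the Weyl average $\widetilde g$ may only lie in $\PL(P;i')^{\mathcal W(X_P)}$ for a larger $i'$ since intersecting the translated cell decompositions can enlarge denominators --- you flag this with ``for appropriate $i$,'' which suffices.
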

\begin{proof}
Obviously, K-semistability implies asymptotic numerical semistability because
\[
\vol(\p P)\int_Pg_{\varphi}(\bs x)dv-\vol(P)\int_{\p P}g_{\varphi}(\bs x)d\sigma\geqslant 0
\]
holds for {\em any} concave piecewise linear function.

We shall prove the converse implication by contradiction.
Since $(X_P,L_P)$ is asymptotically numerically semistable, we see that
\begin{equation}\label{ineq:AsymPss}
F_P(i;g_{\varphi,T})=\vol(\p P)\int_Pg_{\varphi,T}(\bs x)dv-\vol(P)\int_{\p P}g_{\varphi,T}(\bs x)d\sigma \geqslant 0
\end{equation}
holds for all $i\gg0$ and and each $g_{\varphi,T}\in \PL(P;i)^{\mathcal W(X_P)}$, where $T$ is a regular triangulation of $(P,A)$.
Now we suppose that $(X_P, L_P)$ is K-unstable.
Hence, there is a concave piecewise linear function $g_{\varphi,T'}\in \PL(P;i)^{\mathcal W(X_P)}$ with a non-regular triangulation $T'$ such that $F_P(i;g_{\varphi,T'})<0$.
Recall that $g_{\varphi,T}$ is a unique $T$-piecewise linear function determined by $g_{\varphi,T}(\bs a_i)=\varphi_i$ as we saw in Section \ref{sec:SecPoly}.
In particular, we have $g_{\varphi,T}(\bs x)\leqslant g_{\varphi,T'}(\bs x)$ for any point $\bs x$ in $P$ (see, \cite[$(2.3)$]{BFS90}).
By integrating both sides over $P$, we see that
\begin{equation}\label{ineq:BFS}
\int_Pg_{\varphi,T}(\bs x)dv \leqslant \int_P g_{\varphi,T'}(\bs x)dv.
\end{equation}
Since $F_P(i;g_{\varphi,T'})<0$, we see that
\begin{equation}\label{ineq:K_unsta}
 \int_P g_{\varphi,T'}(\bs x)dv<\frac{\vol(P)}{\vol(\p P)} \int_{\p P}g_{\varphi,T'}(\bs x)d\sigma.
\end{equation}
On the other hand, our assumption $\eqref{ineq:AsymPss}$ yields that
\begin{equation}\label{ineq:AsymPss2}
 \int_P g_{\varphi,T}(\bs x)dv\geqslant \frac{\vol(P)}{\vol(\p P)} \int_{\p P}g_{\varphi,T}(\bs x)d\sigma.
\end{equation}
Comparing the left hand sides of $\eqref{ineq:K_unsta}$ and $\eqref{ineq:AsymPss2}$ using $\eqref{ineq:BFS}$,
we conclude that
\[
\int_{\p P}g_{\varphi,T}(\bs x)d\sigma < \int_{\p P}g_{\varphi,T'}(\bs x)d\sigma.
\]
This implies the strict inequality 
\begin{equation}\label{ineq:assump}
g_{\varphi,T}(\bs x)<g_{\varphi,T'}(\bs x)
\end{equation}
for any $\bs x\in P$. There are two possibilities for the relation between the triangulations $T$ and $T'$:
(i) $T$ is a refinement of $T'$, and (ii) $T$ is not a refinement of $T'$. 

\vskip 4pt

(i) If $T$ is a refinement of $T'$, we prove the assertion as follows: Firstly, we see that
\begin{equation}\label{ineq:assump2}
\int_{P}g_{\varphi,T}(\bs x)dv < \int_{P}g_{\varphi,T'}(\bs x)dv
\end{equation}
by integrating both sides of $\eqref{ineq:assump}$ over $P$.
Since $g_{\varphi,T'}$ is linear on each simplex $C'$ in $T'$, we see the equality
\begin{equation}\label{eq:IntDom}
\int_{C'}g_{\varphi,T'}(\bs x)dv=\vol(C')g_{\varphi,T'}(\bs b_{C'})
\end{equation}
as in $\eqref{eq:lin_bary2}$, where $\bs b_{C'}$ is the barycenter of a simplex $C'$.
Then, $\eqref{eq:IntDom}$ yields that
\begin{align}
\begin{split}\label{eq:Decomp}
\int_{P}g_{\varphi,T'}(\bs x)dv&=\sum_{C'\in T'}\int_{C'}g_{\varphi,T'}(\bs x)dv\\
&=\sum_{C'\in T'}\vol(C')g_{\varphi,T'}(\bs b_{C'}).
\end{split}
\end{align}
Since each $C'$ is an $n$-dimensional simplex, the barycenter $\bs b_{C'}$ is given by the average of its vertices:
\[
\bs b_{C'}=\frac{1}{n+1}\sum_{\bs a\in \mathcal V(C')}\bs a.
\]
Plugging this into $\eqref{eq:Decomp}$, we find that
\begin{equation}\label{eq:Decomp3}
\int_Pg_{\varphi,T'}(\bs x)dv=\sum_{C'\in T'}\frac{\vol(C')}{n+1}\sum_{j\in C'}\varphi_j.
\end{equation}
Since the same formula holds for the triangulation $T$, we have the equality
\begin{equation}\label{eq:Decomp2}
\int_Pg_{\varphi,T}(\bs x)dv=\sum_{C\in T}\frac{\vol(C)}{n+1}\sum_{i\in C}\varphi_i.
\end{equation}
Note that each simplex $C'$ of $T'$ is written as
\[
C'=C_{i_1}\cup\dots \cup C_{i_\ell}
\]
using appropriate simplices $C_{i_k}$ of $T$ with the index subset $\set{i_1, \dots, i_\ell}\subset \set{0, \dots, N}$, because $T$ is a refinement of $T'$. 
This yields that the right hand side of $\eqref{eq:Decomp2}$ equals
\[
\sum_{C'\in T'}\frac{\vol(C')}{n+1}\sum_{j\in C'}\varphi_j.
\]
Combining this and $\eqref{eq:Decomp3}$, we derive a contradiction from $\eqref{ineq:assump2}$

\vskip 4pt

(ii) If $T$ is not a refinement of $T'$, there exists a point $\bs a\in A$ such that $\bs a$ is a vertex of simplex $C'$ from $T'$, but not a vertex of any simplex $C$ from $T$.
Considering a unique $T'$-piecewise linear function $h_{\varphi, T'}$ determined by $h_{\varphi, T'}(\bs a_i)=\varphi_i$ once again,
one can see that $h_{\varphi, T}(\bs a)< h_{\varphi, T'}(\bs a)$ for each $\bs a\in A$. By Definition $1.4$ (b) in \cite[p. $219$]{GKZ94}, we have $h_{\varphi, T}\geqslant \varphi(\bs a)$
because $\bs a$ is not a vertex of any simplex from $T$. Then, this implies that
\[
\varphi(\bs a)\leqslant h_{\varphi, T}(\bs a)<h_{\varphi, T'}(\bs a)=\varphi(\bs a),
\]
which yields the contradiction.
\end{proof}

\subsection{Chow stability}
Next we consider Chow stability for a polarized toric variety. 
In \cite{Yotsu16}, it was proved by the author that the Chow form of a (not-necessarily-smooth) projective toric variety is $H$-semistable if and only if it is $H$-polystable with respect to the standard 
complex torus action $H$. Combining with this result and Proposition \ref{prop:Kempf}, we have the following.
\begin{proposition}
Let $A=\set{\bs a_0, \dots ,\bs a_N}\subset M_\Z$ be a finite set of lattice points satisfying $(*)$, and let $P$ be its convex hull.
Then, the associated projective toric variety $X_P\subset \P^N$ is Chow semistable if and only if it is Chow polystable.
\end{proposition}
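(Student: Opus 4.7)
The implication Chow polystability $\Rightarrow$ Chow semistability is immediate from the definitions, so my plan is to establish the converse. Assume $R_{X_P}$ is $G$-semistable with $G=\SL(N+1,\C)$. The first move is to exhibit a large reductive subgroup inside the (projective) stabilizer of $R_{X_P}$: the toric torus $T_\C\cong(\C^\times)^n$ of $X_P$ embeds as an $n$-dimensional subtorus of the standard maximal torus $H\leq G$ via the toric embedding, and it fixes $[R_{X_P}]\in\P(\V)$ because $T_\C$ preserves $X_P$ as a subvariety of $\P^N$.

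Next, I would compute the centralizer $C(T_\C)$ inside $G$. Because the lattice points $\bs a_0,\ldots,\bs a_N$ are pairwise distinct (as $A$ is a set), the $T_\C$-weight spaces in $\C^{N+1}$ are all one-dimensional; consequently, any element of $G$ commuting with $T_\C$ must be diagonal, so $C(T_\C)=H$. Applying Proposition \ref{prop:Kempf} with $G_0=T_\C$ then gives that $R_{X_P}$ is $G$-semistable if and only if it is $H$-semistable, and the cited theorem from \cite{Yotsu16} further identifies this condition with $H$-polystability of $R_{X_P}$ (i.e., the $H$-orbit of $R_{X_P}$ in $\P(\V)$ is Zariski closed).

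The remaining and most delicate step is to upgrade $H$-polystability to $G$-polystability. I expect this to be the main obstacle, because Proposition \ref{prop:Kempf} is formulated only for semistability. Nevertheless, the same centralizer principle is known to be valid for polystability: for a reductive subgroup $G_0$ of the stabilizer $G_v$, the $G$-orbit of $v$ is closed if and only if the $C(G_0)$-orbit of $v$ is closed. This can be extracted either from Luna's slice theorem or, more directly, from Kempf-Ness theory by minimizing a norm along the slice transverse to the $G_0$-fixed locus. Invoking this polystability analog with $G_0=T_\C$ and $C(T_\C)=H$, the chain of equivalences
\[
G\text{-semistable}\iff H\text{-semistable}\iff H\text{-polystable}\iff G\text{-polystable}
\]
closes, and the proposition follows.
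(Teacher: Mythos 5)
Your proposal follows essentially the same route as the paper: the paper's entire argument is to combine the result of \cite{Yotsu16} (that $H$-semistability of the toric Chow form is equivalent to $H$-polystability for the standard torus $H$) with Kempf's instability theorem (Proposition \ref{prop:Kempf}) applied via the centralizer $C(T_\C)=H$ of the toric torus sitting in the stabilizer of $[R_{X_P}]$. You additionally spell out the computation $C(T_\C)=H$ (via the one-dimensionality of the $T_\C$-weight spaces) and correctly flag that upgrading $H$-polystability to $G$-polystability needs the polystable analogue of the centralizer principle (Luna), a step the paper leaves implicit; both points are sound.
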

Finally, we discuss the relation between Chow semistability and numerical semistability. Let us fix a positive integer $i$.
It is natural to expect that Chow semistability implies numerical semistability for an $n$-dimensional smooth projective toric variety $(X_P, L_P^{\otimes i})$.
However, there are some technical issues that prevent us from proving the implication as explained below:
suppose that a polarized toric variety $(X_P,L_P^{\otimes i})$ is Chow semistable for a fixed integer $i>0$.
By Ono's theorem dealt in Example \ref{ex:Ono}, we have
\[
\mathrm{Chow}_P(i;g_\vp):=E_P(i)\int_Pg_{\varphi}(\bs x)\,dv-\vol (P) \!\!\sum_{\bs a\in P\cap (\Z/i)^n}g_{\vp}(\bs a) \geqslant 0
\]
for any $g_\vp \in \PL(P;i)^{\mathcal W(X_P)}$. By the Euler-Maclaurin summation formula \cite{BV97} (see also, Lemma 3.3 in \cite{ZZ08}),
we find that
\[
\mathrm{Chow}_P(i;g_\vp)=\frac{i^{n-1}}{2}F_P(i;g_\vp)+ O(i^{n-2}),
\]
where $F_P(i;g_\vp)$ is the Futaki-Paul invariant in $\eqref{ineq:FPinv}$.
Hence, if $i\in \Z_{>0}$ is sufficiently large, then we conclude that $F_P(i;g_\vp)\geqslant 0$, that is, $(X_P,L_P^{\otimes i})$ is numerically semistable. 
However, for the {\em{fixed}} integer $i>0$, the sign of the leading term $F_P(i;g_\vp)$ is inconclusive even though $\mathrm{Chow}_P(i;g_\vp)\geqslant 0$.
Thus, we fail to prove that Chow semistability implies numerical semistability with respect to the fixed polarization $(X_P, L_P^{\otimes i})$ at the moment.



\begin{thebibliography}{9}

\bibitem[BFS90]{BFS90}
L.~Billera, P.~Filliman and B.~Sturmfels,
\newblock{\em Constructions and Complexity of Secondary Polytopes}.
\newblock  Adv. Math. {\bf{83}} ($1990$), $155$--$179$.

\bibitem[BBE17]{BBE17}
M.~Brandt, A.~Brookner, and C.~Eur,
\newblock{\em A brief survey of $A$-resultants, $A$-discriminants, and $A$-determinants}.
\newblock {URL:}~
{{https://people.math.harvard.edu/$\sim$ceur/notes\underbar{\;\;}pdf/BBE\underbar{\;\;}surveyGKZ.pdf}}


\bibitem[BHJ22]{BHJ22}
S.~Boucksom, T.~Hisamoto, and M.~Jonsson,
\newblock{\em Erratum to
``Uniform K-stability and asymptotics of energy functionals in K\"ahler geometry''}.
\newblock J. Eur. Math. Soc. {\bf{24}} $(2022)$, $735$-$736$. 

\bibitem[BV97]{BV97}
M.~Brion and M.~Vergne, 
\newblock {\em Lattice points in simple polytopes}.
\newblock J. Amer. Math. Soc. {\bf{10}} ($1997$), $371$--$392$.




\bibitem[Dl03]{Dol03}
I.~Dolgachev,
\newblock {\em Lectures on invariant theory}.
\newblock volume {\bf{296}} of London Mathematical Society Lecture Note Series. Cambridge University Press, Cambridge, 2003.


\bibitem[Dn02]{Don02}
S.~K.~Donaldson, 
\newblock {\em Scalar curvature and stability of toric varieties}. 
\newblock {{J. Diff. Geom.}} {\textbf{62}} ($2002$), $289$--$349$.

\bibitem[F67]{Fl67}
W.~Feller,
\newblock {\em An Introduction to Probability Theory and Its Applications}.
\newblock  John Wiley \& Sons, Inc. $3$rd edition, $1968$.


\bibitem[GKZ94]{GKZ94}
I.~M.~Gelfand, M.~M.~Kapranov, and A.~V.~Zelevinsky,
\newblock {\em Discriminants, resultants, and multidimensional determinants}.
\newblock Mathematics: Theory \& Applications. Birkh\"auser Boston Inc., Boston, MA, $1994$.

\bibitem[KSZ92]{KSZ92}
M.~M.~Kapranov, B. Sturmfels and A. V. Zelevinsky,
\newblock {\em Chow polytopes and general resultants.}
\newblock Duke Math. J. {\bf{67}} (1992), no. 1, 189-218.

\bibitem[KL20]{KL20}
L.~Kastner and R.~L\"owe, 
\newblock{\em The newton polytope of the discriminant of a quaternary cubic form.}
\newblock Le Mathematiche, LXXV $(2020)$, $471$--$487$.

\bibitem[LLSW19]{LLSW19}
K-L.~Lee, Z.~Li, J.~Sturm, and X.~Wang,
\newblock {\em Asymptotic Chow stability of toric del Pezzo surfaces}.
\newblock Math. Res. Lett. {\bf{26}} ($2019$), $1759$--$1787$. 

\bibitem[LRS10]{LRS10}
D. Loera, J. A., Rambau and J. Santos. F,
\newblock {\em Triangulations. Structures for algorithms and applications.}
volume {\bf{25}} of {\em Algorithms and Computation in Mathematics}.
\newblock Springer-Verlag, Berlin, $2010$. 





\bibitem[Od88]{Od88}
T.~Oda,
\newblock {\em Convex bodies and algebraic geometry}.
\newblock Ergebnisse der Mathematik und ihrer Grenzgebiete (3), {\bf{15}}, Springer-Verlag, Berlin, $1988$.

\bibitem[O11]{Ono11}
H.~Ono,
\newblock {\em A necessary condition for Chow semistability of polarized toric manifolds.}
\newblock J. Math. Soc. Japan. {\bf{63}} ($2011$), $1377$--$1389$.

\bibitem[O13]{Ono13}
H.~Ono,
\newblock {\em Algebro-geometric semistability of polarized toric manifolds.}
\newblock Asian J. Math. {\bf{17}} ($2013$), $609$--$616$.

\bibitem[OSY23]{OSY23}
H.~Ono, Y.~Sano and N.~Yotsutani,
\newblock {\em Bott manifolds with vanishing Futaki invariants for all K\"ahler classes.} 
arXiv:$2305$.$05924$.

\bibitem[P12]{Sean12}
S.~T.~Paul,
\newblock {\em Hyperdiscriminant polytopes, Chow polytopes, and Mabuchi energy asymptotics.}
\newblock Ann. of Math. {\bf{175}} ($2012$),  $255$--$296$. 

\bibitem[P13]{Sean13}
S.~T.~Paul,
\newblock {\em Stable pairs and coercive estimates for the Mabuchi functional.}
\newblock arXiv:$1308$.$4377$. 

\bibitem[P21]{Sean21}
S.~T.~Paul,
\newblock {\em Mahler measures, stable pairs, and the global coercive estimate for the Mabuchi functional.}
\newblock arXiv:$2105$.$01240$. 

\bibitem[PSZ23]{PSZ23}
S.~T.~Paul, S.~Sun and J.~Zhang, 
\newblock {\em On the generalized numerical criterion.}
\newblock arXiv:$2310$.$17608$. 

\bibitem[Sn23]{Sn23}
Y. Sano, 
\newblock {\em Weight Polytopes and Energy Functionals of Toric Varieties.}
\newblock Peking Math J. ($2023$). arXiv:$2302.09801$.

\bibitem[St17]{St17}
B. Sturmfels, 
\newblock {\em The Hurwitz form of a projective variety.}
\newblock J. Symbolic Comput. {\bf{79}} ($2017$), $186$--$196$.


\bibitem[T05]{Tev05}
E.~A.~Tevelev, 
\newblock {\em Projective Duality and Homogeneous Spaces: Invariant Theory and Algebraic Transformation Groups, IV.} 
\newblock Encyclopaedia Math. Sci. {\bf{133}}, Springer-Verlag, New York, $2005$.



\bibitem[WZ94]{WZ94}
J.~Weyman and A.~Zelevinsky, 
\newblock {\em Multiplicative properties of projectively dual varieties.} 
\newblock Manuscripta Math. {\bf{82}} ($1994$), $139$--$148$. 




\bibitem[Y16]{Yotsu16}
N.~Yotsutani,
\newblock {\em Facets of secondary polytopes and Chow stability of toric varieties.}
\newblock Osaka J. Math. {\bf{53}} ($2016$), $751$--$765$.

\bibitem[Y23]{Yotsu23}
N.~Yotsutani, 
{\em Asymptotic Chow semistability implies Ding polystability for Gorenstein toric Fano varieties}.
\newblock {MDPI. Mathematics.} {\bf{11}}, 19 ($2023$), $4114$.

\bibitem[ZZ08]{ZZ08}
B. Zhou and X. Zhu,
\newblock {\em relative $K$-stability and modified $K$-energy on toric manifolds}.
\newblock Adv. Math. {\bf{219}} ($2008$), $1327$--$1362$.


\end{thebibliography}
\end{document}